\documentclass[10pt]{article}
\usepackage{amsmath,amsthm,amsfonts,amssymb,epsfig,enumerate}
\usepackage{lineno,hyperref}
\usepackage{flafter}

\vfuzz2pt 
\hfuzz2pt 
\setlength{\textwidth}{14.5cm} \setlength{\oddsidemargin}{0.5cm}
\setlength{\evensidemargin}{0.5cm} \setlength{\topmargin}{-1.5cm}

\newtheorem{them}{Theorem}[section]

\newtheorem{lem}[them]{Lemma}

\newtheorem{rem}[them]{Remark}
\numberwithin{equation}{section}

\newcommand{\norm}[1]{\left\Vert#1\right\Vert}
\newcommand{\abs}[1]{\left\vert#1\right\vert}

\usepackage{graphicx}
\usepackage{subfigure}
\usepackage{float}
\usepackage{epstopdf}
\usepackage{color}
\usepackage{colortbl,xcolor}
\usepackage{booktabs}

\begin{document}
\title{On the simultaneous recovery of boundary impedance and internal conductivity}
\author{Jinchao Pan$^{\dag ,\ddag}$ \quad Jijun Liu$^{\dag, \ddag,}$ \thanks{Corresponding author: Prof. Dr. J.J.Liu, email: jjliu@seu.edu.cn}\\
$^\dag$School of Mathematics/S.T.Yau Center of Southeast University, Southeast University\\
Nanjing, 210096, P.R.China\\
$^{\ddag}$Nanjing Center for Applied Mathematics\\
Nanjing, 211135, P.R.China
}

\maketitle
\begin{abstract}
Consider an inverse problem of the simultaneous recovery of boundary impedance and internal conductivity in the electrical impedance tomography (EIT) model using  local internal measurement data, which is governed by a boundary value problem for an elliptic equation in divergence form with Robin boundary condition. We firstly express the solution to the forward problem by volume and surface potentials in terms of the Levi function. Then, for the inverse problem, we prove the uniqueness of the solution in an admissible set by unique extension of
the solution under some {a-prior} assumption. Finally we establish the regularizing reconstruction schemes for boundary impedance and internal conductivity using noisy measurement data with rigorous error estimates. The mollification method is proposed to recover the boundary impedance from the boundary condition, and the internal conductivity with known boundary value is recovered from an integral system, where the Tikhonov regularization is applied to seek the stable solution, considering that the error involved in the boundary impedance coefficient reconstruction will propagate to the recovering process for internal conductivity. Numerical implementations are presented to illustrate the validity of the proposed method.
\end{abstract}

\vskip 0.3cm
%
{\bf Keywords:} Inverse problem, electrical impedance tomography, uniqueness, Levi function, integral equation, regularization, error estimate, numerics.

\vskip 0.3cm

{\bf AMS Subject Classifications:} 35B30; 35R25; 35R30;65N21; 65N30; 65R20

\section{Introduction}
Consider the elliptic equation system
\begin{eqnarray}\label{liu11-01}
\begin{cases}
 -\nabla\cdot(\sigma(x)\nabla u(x))=0, & x \in \Omega,\\
\sigma(x)\partial_\nu u +h(x)u(x)=g(x), & x \in \partial \Omega
\end{cases}
\end{eqnarray}
in a bounded smooth domain $\Omega \subset \mathbb{R}^n$ for $n=2,3$,
with $0 < \sigma_0<\sigma(x)\in C^1(\bar\Omega)$ and $0<h_0\le h(x)\in C(\partial\Omega)$, where $\nu(x)$ is the outward normal direction on $\partial \Omega$, and $g(x)$ is the boundary source.  The distribution of electrical potential $u(x)$ in conductive media $\Omega$ with conductivity $\sigma(x)$ can be described by this model. The positive boundary impedance coefficient $h(x)$, which is also called the Robin coefficient, is of physical importance in engineering applications, e.g., describing the corrosion damage in electrostatic conductor \cite{Kaup, Inglese}.

The well-posedness of \eqref{liu11-01} has been considered thoroughly, see \cite{Gilbert} for the classical solution  and  \cite{Daners, Amr} for generalized solution in some Sobolev spaces. When both $h(x)$ and $\sigma(x)$ are known, the solution $u(x)$ to forward problem \eqref{liu11-01} can be solved numerical by finite element method (FEM) for the domain $\Omega$ in general shape. However, for many engineering situations such as the electrical impedance tomography models, the internal conductivity $\sigma(x)$ and boundary coefficient $h(x)$ may be unknown, which need to be detected using some extra measurable information.  That is, the inverse problem for $(\sigma, h)$ based on \eqref{liu11-01} can be considered as biomedical imaging model or nondestructive testing model in material sciences. More precisely,
for small $\epsilon>0$, denote by $\Omega_{\epsilon}:=\{x\in\Omega,\;\text{dist}(x,\partial \Omega)\leq\epsilon\}\subset\subset\Omega$ the internal domain of $\Omega$ near to the boundary $\partial\Omega$.
Assume we are given
\begin{eqnarray}\label{liu24-u_subdomian}
 u(x)=U(x),\quad x \in \Omega_{\epsilon},
\end{eqnarray}
or its noisy form $U^\delta$ satisfying
\begin{eqnarray}\label{liu24-u_error}
    \norm{U^{\delta}(\cdot)-U(\cdot)}_{H^1(\Omega_{\epsilon})}\le \delta.
\end{eqnarray}
We aim to recover $(\sigma(x)|_{\Omega},h(x)|_{\partial\Omega})$ approximately from
\eqref{liu11-01}-\eqref{liu24-u_error}.

Different from the classical EIT models using boundary Dirichlet-to-Neumann map to recover the internal conductivity, which essentially need the boundary measurements corresponding to infinite number of boundary current/potential injections, the above proposed model aims to detect both the internal conductivity $\sigma(x)$ and the boundary impedance $h(x)$, using the interior measurement corresponding to one boundary injection $g(x)$. Notice, our inversion input data are given only in a small internal layer $\Omega_{\epsilon}$ near to the domain boundary, which is implementable in many practical situations.

This nonlinear inverse problem belongs to the category of the determination of the unknown ingredients in elliptic system, which have been studied extensively for different inversion input, except for the well-known Calderon problems. Adesokan, Jensen and Jin considered the inversion of the conductivity in acousto-electric tomography \cite{AET_Adesokan}, by reformulating it as an optimization problem involved a date-fit term and TV penalty term. In \cite{Bal}, G. Bal proposed a numerical method for mixed conductivity imaging problem, and proved the convergence of the L-M iteration scheme with numerical simulations to validate the effect of iteration algorithm. Hubmer \cite{Hubmer} studied the conductivity reconstruction based on the acoustic and electrical conjugate imaging technology, where the internal power density function corresponding to the boundary current excitation with compact support is taken as inversion input, and this nonlinear inverse problem is solved by Landweber iteration process. Recently, B.Jin used neural network-based reconstruction technology \cite{neural_networks_Jin} to identify the conductivity image, which realizes the reconstruction by transforming the problem into a relaxed weighted minimum gradient problem and uses a fully connected feed-forward neural network to approximate its minimum value.  A recently work for recovering internal $\sigma$ in \eqref{liu11-01} with Dirichlet boundary condition can be found in \cite{Zhang-Liu}. As for recovering boundary impedance coefficients for various PDE models, some numerical methods involving regularizing schemes to get stable solutions have also been proposed,  such as mollification \cite{Wangyuchan, Zhangmengmeng},  regularized least-squares variational formulation \cite{Robin_Jin}, and  the Bayesian approximation error approach \cite{Nicholson}.

To the best of our knowledge, the research works on multiple parameters identifications for PDE systems are still rare. Ian Knowles  focused on the uniqueness of the inverse problems \cite{Knowles}, especially for the identifications of multiple coefficients. In \cite{Wangyuchan}, the simultaneous reconstruction of boundary impedance coefficient and internal source for diffusion system is considered. In principle, the recovery of multiple parameters  can always be treated under the framework of optimization scheme by minimizing some cost functional with penalty term. Even if there is no uniqueness for the multiple parameters reconstruction problem, such an optimization scheme can always yield some generalized solution. However, it is well-known that the optimization schemes for nonlinear inverse problems always suffer from  multiple local minimizers together with the large amount of computations. In order to overcome these disadvantages for our inverse problem, we consider the reconstruction scheme based on the integral system instead of the optimization version, from which the solvability of the inverse problem is rigorously established. More precisely, we firstly recover the Robin coefficient $h(x)$ based on the boundary condition using $U^\delta|_{\Omega_\epsilon}$ directly with unknown $\sigma|_\Omega$, and then reconstruct the internal conductivity $\sigma(x)$ through an integral equation system based on the Levi function scheme, with the reconstructed boundary impedance coefficient involved. Notice, for this proposed scheme where two unknowns are recovered successively, the quantitative influence of the reconstruction error for boundary impedance coefficient on the recovery of $\sigma$ should be carefully analyzed.

The boundary integral methods have shown the efficiency for solving forward and inverse problems of PDEs with constant coefficients such as Laplace equations and Helmholtz equations, where the fundamental solutions can be expressed explicitly. However, for PDE systems with variable coefficients, the concept of fundamental solution should be generalized to the Levi function from which the solution can also be expressed in integral forms with some density pair, see \cite{A. Pomp, Beshley} for a systematic study. Different from the PDEs with constant coefficients, the integral expression for solution to PDE with variable coefficients should involve volume potential \cite{Pomp, Mir}. For the corresponding inverse problems, the integral expression of the solution is of special importance theoretically and numerically, since when the inverse problems are solved by some iterative scheme for minimizing the cost functional, the adjoint operator applied at each iteration step can be easily derived from these integral expressions \cite{Brebbia, George, Colton1}.

Although the Levi function schemes have been successfully applied in solving various forward problems \cite{Cle2, Mikhailov2, JCPAN}, there are few research on the inverse problems for recovering the unknown coefficients of PDEs using these schemes. A recent exploration  is  \cite{Zhang-Liu}, where the authors recovered the medium conductivity based on the Levi function for elliptic equation model with Dirichlet boundary condition. However, only one unknown coefficient $\sigma(x)$ is recovered there without error analysis. Moreover, the reconstruction is based on the analytic extension of solution $u$, which requires the strong analyticity assumption on the unknown $\sigma$.

In this paper, we aim to recover both the unknown conductivity and the boundary Robin coefficient, where the inversion data $U$ are firstly extended to the whole domain $\Omega$ in terms of the Levi function scheme instead of the analytic extension, with the advantage  that the {\it a-prior} assumption on the unknown $\sigma$ is weakened. Then this nonlinear problem is converted into a linear system with respect to the density pair $(\mu,\psi)$ for Levi expression with the coefficient matrix of the linear system as inversion input, and then solve  $\sigma(x)$, provided that $\sigma(x)|_{\partial\Omega}$ be known. Such an observation reveals the new idea of our proposed scheme dealing with the nonlinearity of the inverse problem, namely, $\sigma$ is solved from a linear system with inversion input involved in the coefficient matrix, which is still nonlinear with respect to the inversion input data. To numerically evaluate the domain integral in Levi expression involving the density function $\mu$ in $\mathbb{R}^3$, we apply a domain
discretization-free method which  converts the volume integral into a boundary integral using the dual reciprocity method (DRM)  \cite{DRM_Nardini, DRM_book} and the radial integration method (RIM) scheme \cite{RIM_Gao, Jaw}. On the other hand, since we firstly recover the Robin boundary coefficient, the  reconstruction error in this step will contaminate unavoidably the performance of reconstruction for the conductivity $\sigma(x)$ in the second step. We establish a quantitative analysis on the error propagation for recovering $\sigma(x)$.

The paper is organized as follows. In section 2, we introduce an integral equation system with respect to the density pair $(\mu,\psi)$ for the elliptic equation with Robin boundary condition based on the Levi function. The solvability of this derived coupled system with respect to $(\mu,\psi)$ is rigorously proven. Then the uniqueness for recovering two unknown coefficients is proven. In section 3, the regularizing scheme for reconstructing $(h(x),\sigma(x))$
from the system \eqref{liu11-01}-\eqref{liu24-u_error} is established, with  rigorous analysis on the choice strategies for the regularization parameters  and error estimates for recovering the unknowns. In section 4, the dual reciprocity method (DRM) and the radial integration method (RIM) which transforms the volume integrals into surface integrals are employed to evaluate the volume potential. In section 5, we implement numerical experiments of the proposed schemes for 2-dimensional and 3-dimensional cases, supporting our analysis and showing the validity of our proposed scheme. Section 6 is an appendix where we give an improved estimate on the regularizing operator, which is required for our error analysis.

\section{Levi function and uniqueness for inverse problem}
For differential operator $\mathcal{L}\diamond:=-\nabla\cdot(\sigma\nabla\diamond)$ in $\mathbb{R}^n$, we call the function $P(x,y)$ with $x,y\in \mathbb{R}^n$ the Levi function, provided
\begin{equation}\label{liu22-01}
\mathcal{L}_x P(x,y)=\delta(x-y)+R(x,y)
\end{equation}
in the distribution sense,
where $\delta$ is the standard Dirac function, and $R(x,y)$ is a function of weak singularity for $x=y$.

For $\Phi(x,y)$ being the fundamental solution to the Laplacian $-\Delta$, it is easy to verify that
$$P(x,y):=\frac{1}{\sigma(y)}\Phi(x,y),\quad R(x,y)=\frac{-1}{\sigma(y)}\nabla_x\Phi(x,y)\cdot \nabla\sigma(x)=\begin{cases}
 \frac{1}{\sigma(y)}\frac{(x-y)\cdot \nabla\sigma(x)}{2\pi|x-y|^2}, &n=2\\
 \frac{1}{\sigma(y)} \frac{(x-y)\cdot\nabla\sigma(x)}{4\pi|x-y|^3}, &n=3
\end{cases}$$
for $x\not=y$ meets \eqref{liu22-01}. That is, $P(x,y)$ is the Levi function.

Then the Levi function scheme  represents the solution to the forward problem (\ref{liu11-01}) as
\begin{equation}\label{liu24-solution}
u(x):=\int_{\Omega}\mu(y)P(x,y)dy+\int_{\partial\Omega}\psi(y)P(x,y)ds(y), \quad x\in\Omega.
\end{equation}
By the $H^2$-regularity of volume potential (Theorem 8.2, \cite{Colton1}) and
the jump relation of the normal derivative of single-layer potential as well as continuity
of single-layer potential (Theorem 2.19, Theorem 2.12 \cite{Colton}),
if $u(x)$ given by (\ref{liu24-solution}) solves (\ref{liu11-01}),
$(\mu,\psi)\in C(\Omega)\times C(\partial\Omega)$ must solve
\begin{eqnarray}\label{liu24-system0}
\begin{cases}
\mu(x)+\int_{\Omega}\mu(y) R(x,y)dy+\int_{\partial\Omega}\psi(y) R(x,y))ds(y)=0, &x\in\Omega\\
\int_{\Omega}\mu(y)E_{h}(x,y)dy+\int_{\partial\Omega}\psi(y)E_{h}(x,y)ds(y)+
\psi(x)
=2g(x), &x\in\partial\Omega,
\end{cases}
\end{eqnarray}
where $E_{h}(x,y)=2\sigma(x)\partial_{\nu(x)}P(x,y)+2h(x)P(x,y)$
involving $(\sigma|_{\partial\Omega}, h|_{\partial\Omega})$. This linear system can be written in the operator form
\begin{eqnarray}
    \left(
    \begin{array}{cc}
    I+R_{\Omega} &R_{\partial\Omega}\\
    E_{\Omega} &I+E_{\partial\Omega}
    \end{array}
    \right)
    \left(
    \begin{array}{c}
         \mu \\
         \psi
    \end{array}
    \right)=
    \left(
    \begin{array}{c}
         0\\
         2g
    \end{array}
    \right),
\end{eqnarray}
with the operators $R_\Omega, R_{\partial\Omega},E_\Omega, E_{\partial\Omega}$ defined from  \eqref{liu24-system0} explicitly.

The following result ensures that the Levi function scheme is applicable for solving our forward problem, which is crucial to  our reconstruction scheme for recovering $\sigma(x)$.

\begin{them}\label{them1}
Assume $0<(\sigma,h)\in C^1(\overline\Omega)\times C(\partial\Omega)$. Then it follows
\begin{itemize}

\item For any  $g\in  L^2(\partial\Omega)$,
there exists a unique solution $u\in H^2(\Omega)$ to the direct problem \eqref{liu11-01}.

\item For $g\in C(\partial\Omega)$, there exists a unique solution
$(\mu,\psi)\in C(\Omega)\times C(\partial\Omega)$ to \eqref{liu24-system0}. Moreover,
\begin{equation}\label{liu24-repre}
u(x):=\int_\Omega\mu(y)P(x,y)dy+\int_{\partial\Omega}\psi(y)P(x,y)ds(y), \quad x\in \mathbb{R}^n
\end{equation}
solves the direct problem \eqref{liu11-01} in $H^2(\Omega)$.

\end{itemize}


\end{them}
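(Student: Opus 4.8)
The plan is to establish the two bullets separately: the PDE well-posedness by a variational argument, and the integral-equation solvability by the Fredholm alternative, with uniqueness of the latter bootstrapped from the former. For the first bullet I would use the weak formulation, seeking $u\in H^1(\Omega)$ with
\[
\int_\Omega\sigma\nabla u\cdot\nabla v\,dx+\int_{\partial\Omega}h\,u\,v\,ds=\int_{\partial\Omega}g\,v\,ds,\qquad\forall\,v\in H^1(\Omega).
\]
The bilinear form is bounded and, since $\sigma\ge\sigma_0>0$ and $h\ge h_0>0$, coercive on $H^1(\Omega)$ because $\big(\|\nabla u\|_{L^2(\Omega)}^2+\|u\|_{L^2(\partial\Omega)}^2\big)^{1/2}$ is an equivalent $H^1$-norm; the functional $v\mapsto\int_{\partial\Omega}g\,v\,ds$ is bounded for $g\in L^2(\partial\Omega)$ by the trace theorem. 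Lax--Milgram then gives a unique weak solution, and the $H^2$-regularity follows from elliptic regularity for the Robin problem on the smooth domain with $\sigma\in C^1(\overline\Omega)$, as in \cite{Gilbert,Daners,Amr}.

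For the second bullet I would write \eqref{liu24-system0} as $(I+\mathcal{K})(\mu,\psi)^\top=(0,2g)^\top$ on $C(\overline\Omega)\times C(\partial\Omega)$, with $I$ the block identity and $\mathcal{K}$ the array of the four integral operators, and first show $\mathcal{K}$ is compact. The two volume operators have weakly singular kernels ($P=\Phi/\sigma=O(|x-y|^{-(n-2)})$ and $R=O(|x-y|^{-(n-1)})$ with $n-1<n$), hence are compact on $C(\overline\Omega)$. The boundary blocks are treated through the classical mapping and jump properties of single-layer potentials on the smooth boundary (Theorems 2.12 and 2.19 of \cite{Colton}), using the identity $R=-\nabla\sigma\cdot\nabla_x P$ to express the off-diagonal coupling through gradients of the single layer. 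Once $\mathcal{K}$ is compact, the Fredholm alternative reduces existence and uniqueness of $(\mu,\psi)$ to the triviality of the homogeneous system.

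To show the homogeneous system ($g=0$) admits only $(\mu,\psi)=(0,0)$, I would read the derivation preceding the theorem in reverse: a homogeneous solution yields through \eqref{liu24-repre} a function $u$ with $\mathcal{L}u=0$ in $\Omega$ (first equation) and $\sigma\partial_\nu u+h\,u=0$ on $\partial\Omega$ (second equation, via the normal-derivative jump), so the uniqueness of the first bullet forces $u\equiv0$ in $\Omega$. Applying $\Delta$ inside $\Omega$ annihilates the single-layer part (harmonic there) and leaves $-\mu/\sigma$ from the Newtonian volume potential, whence $\mu\equiv0$. With $\mu=0$, $u$ reduces to the single-layer potential $S\psi$, which vanishes in $\Omega$, is continuous across $\partial\Omega$ (so its exterior trace vanishes), is harmonic in $\mathbb{R}^n\setminus\overline\Omega$ because $P$ is built from the harmonic $\Phi$, and decays at infinity; exterior uniqueness for the Laplacian then gives $S\psi\equiv0$ outside as well, and the jump of its normal derivative across $\partial\Omega$, equal to $-\psi/\sigma$, must vanish, so $\psi\equiv0$. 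Finally, the unique $(\mu,\psi)$ produced by the Fredholm alternative reproduces the solution of the first bullet via \eqref{liu24-repre}, whose $H^2$-regularity comes from the $H^2$-regularity of the volume potential (Theorem 8.2 of \cite{Colton1}) together with the mapping properties of the single layer.

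The main obstacle I anticipate is the rigorous treatment of the boundary integral blocks in the continuous-function setting: since $R_{\partial\Omega}$ and the $E$-blocks involve the gradient of a single-layer potential, whose tangential component is a genuinely singular (not weakly singular) integral, establishing their boundedness and compactness on $C$-spaces really uses the $C^2$-smoothness of $\partial\Omega$ together with the precise jump relations. A secondary delicate point is the behaviour at infinity required for the exterior uniqueness step, in particular controlling the logarithmic growth of the single-layer potential in the planar case $n=2$.
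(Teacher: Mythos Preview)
Your proposal follows essentially the same route as the paper: PDE well-posedness first (the paper cites \cite{Amr} for $W^{1,p}$ existence and bootstraps to $H^2$ via $\Delta u=-\nabla\ln\sigma\cdot\nabla u$, whereas you use Lax--Milgram plus elliptic regularity; both work), then the Fredholm alternative for \eqref{liu24-system0} with uniqueness of the homogeneous system inherited from the PDE. The only substantive difference is the order in the uniqueness step: the paper first obtains $\psi\equiv0$ by passing to the exterior Laplace problem and reading the jump of the normal derivative of the single layer, and only afterwards deduces $\mu\equiv0$ from the reduced equation $(I+R_\Omega)\mu=0$; you instead read off $\mu\equiv0$ immediately from $-\Delta u=\mu/\sigma$ (the single-layer part being harmonic in $\Omega$) and then handle $\psi$ via the exterior argument. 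Your ordering is arguably cleaner, since the paper's implication ``$(I+R_\Omega)\mu=0\Rightarrow\mu\equiv0$'' is stated without justification and in fact needs precisely the Laplacian observation you make (one already knows $u=0$, hence the volume potential vanishes, hence its Laplacian does). The two obstacles you flag at the end---compactness of the boundary blocks on $C$-spaces when tangential gradients of single layers intervene, and the logarithmic behaviour at infinity for $n=2$---are genuine; the paper simply asserts weak singularity of $R,E_h$ and ``zero asymptotic behaviour'' without further comment, so your caution is well placed rather than a divergence in approach.
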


\begin{proof}

The unique existence of the solution $u\in W^{1,p}(\Omega)$ for $p\in (1,+\infty)$ to elliptic equation in general form with Robin boundary condition has been established in \cite{Amr}.
Now we show that the solution $u\in W^{1,p}$, which meets \eqref{liu11-01},
is also in $H^2(\Omega)$ in our situation. Since
$$0=\int_\Omega\nabla\cdot(\sigma\nabla u) u dx\le -\int_{\partial\Omega}h(x)u^2(x)ds(x)+\int_{\partial\Omega}g(x)u(x)dx$$
due to $\sigma>0$, we have
$\|u\|_{L^2(\partial\Omega)}\le \frac{1}{h_-}\|g\|_{L^2(\partial\Omega)}$ with $h_-:=\min_{\partial\Omega}h(x)>0$.
So the regularity of the solution to the elliptic equation with Neumann boundary condition (Theorem 2, \cite{Jer}) together with the above estimate means
$$\|u\|_{H^{3/2}(\Omega)}\le C(\Omega)\|g-hu\|_{L^2(\partial\Omega)}\le C(\Omega)(1+\frac{h_+}{h_-})\|g\|_{L^2(\partial\Omega)}$$
with $h_+=\max_{\partial\Omega} h(x)>0$.
Then for $0<\sigma\in C^1(\overline\Omega)$, it follows from the equation that
$$\|\Delta u\|_{L^2(\Omega)}\le \|\nabla \ln\sigma\cdot \nabla u\|_{L^2(\Omega)}\le
C\|u\|_{H^1(\Omega)}\le C\|u\|_{H^{3/2}(\Omega)}\le C\|g\|_{L^2(\partial\Omega)}.$$
That is, we have $\|u\|_{H^2(\Omega)}\le C\|g\|_{L^2(\partial\Omega)}$ and consequently $u\in H^2(\Omega)$.

The fact that $u$ defined by \eqref{liu24-repre} for $(\mu,\psi)$ satisfying \eqref{liu24-system0} solves \eqref{liu11-01} follows from the regularity and jump relation of volume and surface potentials. So it is enough to prove the solvability of $(\mu,\psi)\in C(\overline\Omega)\times C(\partial\Omega)$ to \eqref{liu24-system0}.
Since both $R(x,y)$ and $E_h(x,y)$ are of weak singularity,  $(R_\Omega, R_{\partial\Omega})$ is compact from $C(\overline \Omega)\times C(\partial\Omega)$ to $C(\Omega)$, and $(E_\Omega, E_{\partial\Omega})$ is compact from $C(\overline \Omega)\times C(\partial\Omega)$ to $C(\partial\Omega)$. So we only need to prove the uniqueness of the solution to \eqref{liu24-system0} by the Fredholm alternative theorem.

By the regularity of volume and surface potentials,  $u(x)$ given by \eqref{liu24-repre} with density pair $(\mu, \psi)\in C(\overline\Omega)\times C(\partial\Omega)$ is in $H^2(\Omega)$. Moreover, for $(\mu, \psi)$ satisfying \eqref{liu24-system0} with $g=0$,  $u$ solves \eqref{liu11-01} with $g=0$,
which leads to $u=0$ in $H^2(\Omega)$ and consequently $u=0$ in $C(\overline\Omega)$.

We already  have
\begin{align}\label{liu24-proof2}
u(x)=\int_{\Omega}\Phi(x,y)\frac{\mu(y)}{\sigma(y)}dy+\int_{\partial\Omega}\Phi(x,y)\frac{\psi(y)}{\sigma(y)}ds(y)\equiv 0
\end{align}
for $x\in\overline\Omega$.  However, $u(x)$ representing by \eqref{liu24-repre} also satisfies the exterior problem
\begin{eqnarray*}
\begin{cases}
-\Delta u(x)=0, & x\in\mathbb{R}^n\setminus \overline\Omega,\\
\sigma(x)\frac{\partial u}{\partial \nu} +h(x)u(x)=0, & x\in \partial \Omega
\end{cases}
\end{eqnarray*}
with zero assymptoic behavior $\abs{x}\to\infty.$ Consequently we also have
\begin{align}\label{liu24-proof3}
    u(x)=\int_{\Omega}\Phi(x,y)\frac{\mu(y)}{\sigma(y)}dy+\int_{\partial\Omega}\Phi(x,y)\frac{\psi(y)}{\sigma(y)}ds(y)\equiv 0
\end{align}
for $x\in \mathbb{R}^n\setminus\overline\Omega$. Taking $x\to\partial\Omega$ from $\Omega$ and $\mathbb{R}^n\setminus\overline\Omega$ in \eqref{liu24-proof2}
and \eqref{liu24-proof3} respectively, the continuity of single layer potential and the volume potential on $\partial \Omega$ together with the jump relation of derivative of single layer potential on $\partial\Omega$ yields
$$
\int_{\Omega}\Phi(x,y)\frac{\mu(y)}{\sigma(y)}dy+\int_{\partial\Omega}\Phi(x,y)\frac{\psi(y)}{\sigma(y)}ds(y)\pm\frac{\psi(x)}{\sigma(x)}\equiv 0,\quad x\in\partial\Omega,
$$
which generate $\psi(x)\equiv 0$ in $\partial\Omega$ by subtracting these two identities.
Then $\mu(x)\equiv0$ follows from
\begin{eqnarray*}
\mu(x)+\int_{\Omega}\mu(y)R(x,y)dy=0, \quad x\in\Omega.
\end{eqnarray*}
So, for $g\in C(\partial\Omega)$, \eqref{liu24-system0} has a unique solution $(\mu,\psi)\in C(\tilde\Omega)\times C(\partial\Omega)$.
The proof is complete.
\end{proof}


Now we consider the uniqueness of inverse problem.
Define the admissible set
$$
\mathbb{A}=\{(\sigma,h)\in C^1(\overline{\Omega})\times C(\partial\Omega):\;0<h_{-}\le h(x) \le h_{+}, \; 0<\sigma_-\le \sigma(x) \le \sigma_+\}.
$$

For given $g(x)\in C(\partial\Omega)$, denote by $u[\sigma,h](x)\in H^{2}(\Omega)\hookrightarrow C^{0,\lambda}(\overline\Omega)$ with $\lambda\in (0,1/2)$ the solution to forward problem \eqref{liu11-01}. The following  bounds on $u[\sigma,h](x)$ are necessary for estimating the convergence property of our regularizing solution for the inverse problem.

\begin{lem}\label{lem1}
Assume $\partial\Omega\in C^{1}$. If $g\in C(\partial\Omega)$ satisfies
$0\not\equiv g(x)\le 0$, then it follows for all $(h,\sigma)\in \mathbb{A}$ uniformly that
\begin{eqnarray}\label{liu24-bound_u}
    C_{-}(\Omega,\sigma_{\pm},h_{\pm},g)\le u[\sigma,h](x)\le C_{+}(\Omega, \sigma_{\pm}, h_{\pm},g)<0,\quad x \in \overline{\Omega}.
\end{eqnarray}
\end{lem}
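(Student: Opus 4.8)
The plan is to derive both inequalities from the weak maximum principle and Hopf's lemma for the divergence-form operator $\mathcal{L}$, so that every constant that appears depends on $\sigma$ only through the ellipticity bounds $\sigma_\pm$ (and on $n,\Omega,h_\pm,g$), never through $\nabla\sigma$; this is exactly what makes the estimate uniform over the non-compact admissible set $\mathbb{A}$. First I would record that, since $u[\sigma,h]$ solves $-\nabla\cdot(\sigma\nabla u)=0$ with no zeroth-order term, the weak maximum principle forces both $\max_{\overline\Omega}u$ and $\min_{\overline\Omega}u$ to be attained on $\partial\Omega$, and that $\sigma\in C^1(\overline\Omega)$ together with the smoothness of $\Omega$ gives $u\in C^{1,\alpha}(\overline\Omega)$ and the interior ball condition, which is enough for Hopf's lemma to apply pointwise. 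To see $u<0$, suppose the maximum $M:=\max_{\overline\Omega}u\ge 0$ is attained at $x_0\in\partial\Omega$. If $u$ is non-constant, Hopf's lemma yields $\partial_\nu u(x_0)>0$, whence $g(x_0)=\sigma(x_0)\partial_\nu u(x_0)+h(x_0)M>0$, contradicting $g\le 0$; the constant case gives $hM\equiv g\le 0$, forcing $g\equiv 0$, again excluded. Thus $u<0$ on $\overline\Omega$.

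For the lower bound I would evaluate the Robin condition at a minimum point $x_1\in\partial\Omega$. There the outward normal derivative satisfies $\partial_\nu u(x_1)\le 0$, so $h(x_1)\,u(x_1)=g(x_1)-\sigma(x_1)\partial_\nu u(x_1)\ge g(x_1)$, and since $u(x_1)<0<h(x_1)$ this gives $u(x_1)\ge g(x_1)/h(x_1)\ge \min_{\partial\Omega}g/h_-$. Hence $u(x)\ge C_-:=\min_{\partial\Omega}g/h_-$ for all $x\in\overline\Omega$, a bound depending only on $g$ and $h_-$.

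The upper bound $u\le C_+<0$ is the crux, because the same Hopf computation at a maximum point only yields $u(x_0)\le g(x_0)/h(x_0)\le 0$, which degenerates to $0$ wherever $g$ vanishes and therefore gives strict negativity but no uniform gap. To produce a uniform gap I would pass to $v:=-u>0$, which again solves $-\nabla\cdot(\sigma\nabla v)=0$, and first capture its total boundary mass: testing the weak formulation with $\phi\equiv 1$ gives $\int_{\partial\Omega}h\,v\,ds=\int_{\partial\Omega}(-g)\,ds=\norm{g}_{L^1(\partial\Omega)}>0$, so that $\int_{\partial\Omega}v\,ds\ge \norm{g}_{L^1(\partial\Omega)}/h_+>0$ and in particular $\sup_{\overline\Omega}v\ge c_2>0$ with $c_2$ uniform. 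I would then invoke the De Giorgi--Nash--Moser Harnack inequality for $v$, whose constant $C_H$ depends only on $n$, on the ellipticity ratio $\sigma_+/\sigma_-$, and on $\Omega$ (and crucially not on $\nabla\sigma$), extended up to the boundary by using the Robin condition $\sigma\partial_\nu v=-g-h\,v$ to control the conormal derivative, to conclude $\inf_{\overline\Omega}v\ge \sup_{\overline\Omega}v/C_H\ge c_2/C_H=:c_0>0$. Equivalently, one may combine the interior Harnack inequality with a quantitative boundary-point version of Hopf's lemma at the minimum of $v$. Either way this gives $u\le -c_0=:C_+<0$ uniformly on $\mathbb{A}$.

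The main obstacle is precisely this last step: turning the averaged positivity $\int_{\partial\Omega}(-u)\,ds\ge \norm{g}_{L^1(\partial\Omega)}/h_+$ into a pointwise, uniform, strictly negative ceiling for $u$, with no control on $\nabla\sigma$. The reason Harnack (rather than Schauder or $H^2$ estimates) is the right tool is that its constants are governed only by $\sigma_\pm$; the delicate point to verify carefully is the up-to-the-boundary version of the Harnack inequality for the conormal/Robin problem, since $v$ attains its infimum on $\partial\Omega$ rather than in the interior. This uniform gap $C_+<0$ is also what later makes the pointwise reconstruction $h=(g-\sigma\partial_\nu u)/u$ on $\partial\Omega$ a stable division.
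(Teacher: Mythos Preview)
Your strict-negativity argument (strong maximum principle plus Hopf at a boundary maximum, with the constant case handled separately) is exactly what the paper does. The two proofs diverge on the quantitative two-sided bound. For the lower bound the paper simply invokes the $C^{0,\lambda}$ estimate inherited from $H^2$-regularity in Theorem~2.1, writing $|u|\le C(\sigma,h,g)$; your direct evaluation of the Robin condition at a boundary minimum, giving the explicit $u\ge \min_{\partial\Omega}g/h_-$, is more elementary and, unlike the $H^2$ route (which in the paper's own proof passes through $\|\nabla\ln\sigma\|_{L^\infty}$), makes the dependence on $(\sigma_\pm,h_\pm,g)$ alone completely transparent.

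For the upper bound $C_+<0$ you go further than the paper. The paper's argument stops at ``$u<0$ on the compact set $\overline\Omega$, hence $\max_{\overline\Omega}u<0$,'' which produces a strictly negative ceiling for each fixed $(\sigma,h)$ but does not address uniformity over the non-compact admissible set $\mathbb{A}$; the constant it writes is $C(\sigma,h,g)$, not $C(\sigma_\pm,h_\pm,g)$. You correctly isolate this as the real issue, and your mechanism---the $L^1(\partial\Omega)$ identity $\int_{\partial\Omega}h(-u)\,ds=\|g\|_{L^1(\partial\Omega)}$ feeding into a De~Giorgi--Nash--Moser Harnack inequality whose constant depends only on $n$, $\Omega$ and the ellipticity ratio $\sigma_+/\sigma_-$---is the right one precisely because it avoids any dependence on $\nabla\sigma$. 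The technical point you flag, carrying Harnack up to the boundary for the Robin problem (the infimum of $v=-u$ is attained on $\partial\Omega$), is genuine; a workable route is to combine interior Harnack on a fixed compact $K\subset\subset\Omega$ with a barrier/comparison argument in the boundary strip using $\sigma\partial_\nu v=-g-hv\le -h_-v$, or to appeal to boundary Harnack results for conormal conditions. In short, your outline is more rigorous on the uniformity claim than the paper's own proof, at the cost of the extra Harnack machinery.
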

\begin{proof}
By Theorem 2.1, it follows $u\in H^{2}(\Omega)$. Now we show $\max_{\overline \Omega }u:=u_0<0$.

Case 1. $u$ is constant in $\Omega$. Then the boundary condition becomes
$h(x)u_0=g(x)\le 0$ for $x\in\partial\Omega$
which leads to
$\frac{g(x)}{h(x)}\equiv u_0$ in $\partial\Omega$ due to $h(x)|_{\partial\Omega}>0$. Since $0\not\equiv g(x)\le 0$, we have $u_0<0$. That is, this situation occurs only for $\frac{g(x)}{h(x)}$ being a constant in $\partial\Omega$ together with $g(x)|_{\partial\Omega}<0$.

Case 2. $u$ is not constant in $\Omega$. If $\max_{\overline \Omega }u=u_0\ge 0$, then $u$ attains its maximum value $u_0$ only at some point $x_0\in\partial\Omega$ by strong maximum principle (Theorem 4, page 350 \cite{Evans}), which means $\frac{\partial u}{\partial \nu}|_{x_0}>0$ .  Therefore we are lead to  $$g(x_0)=\sigma(x_0)\partial_{\nu}u(x_0)+h(x_0)u(x_0)
\ge \sigma(x_0)\partial_{\nu}u(x_0)>0$$
 due to $\sigma(x_0), h(x_0)>0, u(x_0)\ge 0$, which contradicts $g(x_0)\le 0$.
So we have
 $u[\sigma,h](x)<0$ for $x \in \overline{\Omega}$.
On the other hand, the $C^{0,\lambda}$-estimate in Theorem 2.1 says
$|u(x)|\le \|u\|_{C^{0,\lambda }}\le C(\sigma,h,g)$,
which yields from $u<0$ that
$c(\sigma,h,g)<u(x)<C(\sigma,h,g)<0$ for $x\in\overline \Omega$.
The proof is complete.
\end{proof}

We have shown that the solution $u$ to \eqref{liu11-01} is in $H^2(\Omega)\hookrightarrow C^{0,\lambda}(\Omega)$ for $(\sigma, h)\in \mathbb{A}$ and $g\in C(\partial\Omega)$ in Theorem 2.1. Therefore our inversion input data $u[\sigma,h](x)=U(x)$ for $x\in\Omega_{\epsilon}$ is point wisely defined.
Now it is  natural to consider the uniqueness of our inverse problem:  can the inversion input  $u[\sigma,h](\cdot)|_{\Omega_{\epsilon}}$ determine $(\sigma|_{\overline\Omega}, h|_{\partial\Omega})$ uniquely?

We assume that $\sigma|_{\partial\Omega}$ is known. Since the normal derivative $\partial_{\nu}u[\sigma,h]|_{\partial\Omega}$ can be uniquely determined from $u[\sigma,h](\cdot)|_{\Omega_{\epsilon}}$, $h(x)|_{\partial\Omega}$ can be uniquely recovered from the boundary condition and Lemma 2.2 by the explicit expression
\begin{equation}\label{liu-240813-1}
h(x)=\frac{1}{u[\sigma,h](x)}\left(g(x)-\sigma(x)\partial_{\nu}u[\sigma,h](x)\right), \quad x\in\partial\Omega.
\end{equation}

If $u[\sigma,h](x)|_\Omega$ is known, i.e., the inversion input $u[\sigma, h](\cdot)$ is specified in the whole domain $\Omega$ instead of $\Omega_\epsilon\subset\subset \Omega$, the unique recovery of $\sigma(x)|_{\Omega}$ with known $\sigma|_{\partial\Omega}$ has been established in \cite{Richter} from the first order partial differential equation
\begin{equation}\label{liu17}
\nabla\sigma(x)\cdot\nabla u(x)+\sigma(x) \Delta u(x)=0,\quad x\in\Omega
\end{equation}
with respect to $\sigma$, using the expression of $\sigma$ obtained from the integrals along characteristic lines under the {\it a-prior} assumption
$\min_{\overline\Omega}|\nabla u(x)|\ge \kappa_1>0$.

In the case that $u[\sigma, h]$ is analytic in $\Omega$,
$u[\sigma_1, h]=u[\sigma_2, h]$ in $\Omega_\epsilon$ ensures $u[\sigma_1, h]=u[\sigma_2, h]$ in $\Omega$, so the uniqueness result established in \cite{Richter} is still applicable to yield $\sigma_1=\sigma_2$ in $\Omega$ for the same inversion input data
$u[\sigma_1, h]|_{\Omega_\epsilon}=u[\sigma_2, h]|_{\Omega_\epsilon}$.
To remove the analytical requirement on the solution $u[\sigma, h](x)$, a possible way is to apply the unique extension of the solution to elliptic equation  to get $u[\sigma, h]|_{\Omega}$ from $u[\sigma, h]|_{\Omega_\epsilon}$. However, since the governed equation \eqref{liu17}
for $u=u[\sigma,h]$ is $\sigma$-dependent with $\sigma$ to be determined,  the unique continuation based on PDEs cannot be applied directly to ensure $u[\sigma_1, h]|_{\Omega}=u[\sigma_2, h]|_{\Omega}$ from $u[\sigma_1, h]|_{\Omega_\epsilon}=u[\sigma_2, h]|_{\Omega_\epsilon}$, since $u[\sigma_1, h]$ and $u[\sigma_2, h]$ obey different elliptic equations in $\Omega$.

However, if we add some extra restrictions on $(\Omega, \Omega_\epsilon)$, we can still prove
$$u[\sigma_1, h]|_{\Omega}=u[\sigma_2, h]|_{\Omega}$$
from $u[\sigma_1, h]|_{\Omega_\epsilon}=u[\sigma_2, h]|_{\Omega_\epsilon}$ stated as follows, due to the solution expression by the Levi function.

Since $h(x)$ can be determined uniquely, we consider the unique recovery of $\sigma(x)$ with known $(h(x), u(x))|_{\partial\Omega}$. Then  \eqref{liu11-01} can be rewritten as a Neumann problem
\begin{eqnarray}\label{liu24-01}
\begin{cases}
-\nabla\cdot(\sigma(x)\nabla u(x))=0, & x \in \Omega,\\
\sigma(x)\partial_\nu u =g(x)-h(x)u(x), & x \in \partial \Omega
\end{cases}
\end{eqnarray}
with known $u|_{\partial\Omega}$.
Define $$\tilde\mu(x):=\frac{\mu(x)}{\sigma(x)}, \;x\in\overline\Omega,\quad \tilde\psi(x):=\frac{\psi(x)}{\sigma(x)},\; x\in\Omega,\quad  \tilde h(x)=\frac{h(x)}{\sigma(x)},\; x\in\partial\Omega.$$
Then the solution expression \eqref{liu24-solution} can be written as
\begin{eqnarray}\label{liu24-new solution}
u(x)=\int_{\Omega}\tilde\mu(y)\Phi(x,y)dy+\int_{\partial\Omega}\tilde\psi(y)\Phi(x,y)ds(y), \quad x\in\Omega.
\end{eqnarray}
Substituting \eqref{liu24-new solution} into \eqref{liu24-01}, we have
\begin{eqnarray}\label{liu24-system}
\begin{cases}
\tilde\mu(x)+\int_{\Omega}\tilde \mu(y)\tilde R(x,y)dy+\int_{\partial\Omega}\tilde \psi(y)\tilde R(x,y)ds(y)=0, \;x\in\Omega,\\
\int_{\Omega}\tilde \mu(y)\partial_{\nu(x)}\Phi(x,y)dy+\int_{\partial\Omega}\tilde \psi(y)\partial_{\nu(x)}\Phi(x,y)ds(y)+
\frac{1}{2}\tilde\psi(x)
=\frac{g(x)}{\sigma(x)}-\tilde h(x)u(x), \;x\in\partial\Omega,
\end{cases}
\end{eqnarray}
where $\tilde R(x,y)=-\nabla_x\Phi(x,y)\cdot \nabla \ln\sigma(x)$.
Set $\tilde g(x):=\frac{g(x)}{\sigma(x)}$ for $x\in\partial\Omega$. Then the second equation of \eqref{liu24-system} and \eqref{liu24-new solution} lead to
\begin{eqnarray}\label{liu24-system2}
\begin{cases}
\int_{\Omega}\tilde\mu(y)\Phi(x,y)dy+\int_{\partial\Omega}\tilde\psi(y)\Phi(x,y)ds(y)=U(x), \;x\in\Omega_{\epsilon}\\
\int_{\Omega}\tilde \mu(y)\partial_{\nu(x)}\Phi(x,y)dy+\int_{\partial\Omega}\tilde \psi(y)\partial_{\nu(x)}\Phi(x,y)ds(y)+
\frac{1}{2}\tilde \psi(x)
=\tilde g(x)-\tilde h(x)U(x), \;x\in\partial\Omega
\end{cases}
\end{eqnarray}
using inversion input $U|_{\Omega_\epsilon}$ and
known $(\tilde g-\tilde h U)|_{\partial\Omega}$, which is a $\sigma|_\Omega$-independent linear system with respect to the new density pair $(\tilde\mu,\tilde\psi)$.
Define the operator
\begin{align*}
\mathbb{K}_{11}^{\Omega\to\Omega_{\epsilon}}[\tilde\mu](x):
&=\int_{\Omega} \Phi(x,y)
\tilde\mu(y)dy,&x\in\Omega_{\epsilon},\\
\mathbb{K}_{12}^{\partial\Omega\to\Omega_{\epsilon}}[\tilde\psi](x):
&=\int_{\partial\Omega}
\Phi(x,y)\tilde\psi(y)ds(y),&x\in\Omega_{\epsilon},\\
\mathbb{K}_{21}^{\Omega\to\partial\Omega}[\tilde\mu](x):
&=2\int_{\Omega}\partial_{\nu(x)}\Phi(x,y)
\tilde\mu(y)dy,  &x\in\partial\Omega,\\
\mathbb{K}_{22}^{\partial\Omega\to\partial\Omega}[\tilde\psi](x):
&=2\int_{\partial\Omega}
\partial_{\nu(x)}\Phi(x,y)
\tilde\psi(y)ds(y),      &x\in\partial\Omega.
\end{align*}
Then the operator form of   (\ref{liu24-system2}) can
 be represented as
\begin{eqnarray}\label{liu24-system3}
\begin{cases}
\mathbb{K}_{11}^{\Omega\to\Omega_{\epsilon}}[\tilde\mu](z)+
\mathbb{K}_{12}^{\partial\Omega\to\Omega_{\epsilon}}[\tilde\psi](z)=U(z),  &z\in \Omega_{\epsilon},\\
\mathbb{K}_{21}^{\Omega\to\partial\Omega}[\tilde\mu](x)+(\mathbb{I}+
\mathbb{K}_{22}^{\partial\Omega\to\partial\Omega})[\tilde\psi](x)=2\tilde g(x)-2\tilde h(x)U(x),&x\in\partial\Omega.
\end{cases}
\end{eqnarray}

We will prove the uniqueness of recovery of $\sigma(x)$ using $u[\sigma,h]|_{\Omega_\epsilon}$ as inversion input.
Denote by $U_i(z):=u[\sigma_i,h](z)$ for $z\in\Omega_\epsilon$ the inversion input corresponding to $(\sigma_i, h)$ with $i=1,2$. Since we assume $\sigma_1(x)=\sigma_2(x)$ for $x\in\partial\Omega$, it is easy to see $\tilde h_1(x)=\tilde h_2(x):=\tilde h(x)$ and $\tilde g_1(x)=\tilde g_2(x):=\tilde g(x)$ for $x\in\partial\Omega$.

We firstly show the unique extension of $u[\sigma, h](x)$ from $\Omega_\epsilon$ to $\Omega$.

\begin{them} \label{0814-2}
 Assume that $0$ is not the eigenvalue of the matrix operator
\begin{eqnarray*}
\mathbb{K}:=\left(
\begin{array}{cc}
\mathbb{K}_{11}^{\Omega\to\Omega_{\epsilon}} &\mathbb{K}_{12}^{\partial\Omega\to\Omega_{\epsilon}}\\
\mathbb{K}_{21}^{\Omega\to\partial\Omega} &\mathbb{I}+\mathbb{K}_{22}^{\partial\Omega\to\partial\Omega}
\end{array}
\right).
\end{eqnarray*}
Then we have  $$u[\sigma_1, h](x)=u[\sigma_2, h](x), \quad x\in\Omega $$
from $u[\sigma_1, h]|_{\Omega_\epsilon}=u[\sigma_2, h]|_{\Omega_\epsilon}$ and the {\it a-prior} condition $\sigma_1|_{\partial\Omega}=\sigma_2|_{\partial\Omega}$.
\end{them}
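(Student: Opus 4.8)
The plan is to exploit the $\sigma|_\Omega$-independence of the reformulated integral system \eqref{liu24-system3}. By Theorem 2.1 each solution $u_i := u[\sigma_i, h]$ admits the reduced Levi representation \eqref{liu24-new solution} with density pair $(\tilde\mu_i, \tilde\psi_i) = (\mu_i/\sigma_i, \psi_i/\sigma_i)$, where $\Phi$ is the fundamental solution of $-\Delta$. The essential and, I expect, most delicate observation is that although $(\tilde\mu_i,\tilde\psi_i)$ a priori solves only the $\sigma_i$-dependent system \eqref{liu24-system} (whose domain kernel $\tilde R$ carries $\nabla\ln\sigma_i$), it \emph{also} solves the reformulated system \eqref{liu24-system3}: the first equation of \eqref{liu24-system3} is merely the representation formula evaluated on $\Omega_\epsilon$, namely $u_i = U_i$ there, while the second is exactly the Robin/Neumann boundary condition of \eqref{liu24-01}. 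Both involve only the Laplacian kernel $\Phi$, so the operator $\mathbb{K}$ is entirely free of $\sigma|_\Omega$. The legitimacy of replacing the $\sigma$-dependent domain equation of \eqref{liu24-system} by the interior-data equation on $\Omega_\epsilon$ is precisely what eliminates the residual $\sigma|_\Omega$ dependence.

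Next I would verify that the two right-hand sides coincide. On $\partial\Omega$ the hypothesis $\sigma_1|_{\partial\Omega} = \sigma_2|_{\partial\Omega}$ gives $\tilde h_1 = \tilde h_2 =: \tilde h$ and $\tilde g_1 = \tilde g_2 =: \tilde g$. The interior data agree by assumption, $U_1 = U_2$ on $\Omega_\epsilon$; since $u_i \in H^2(\Omega) \hookrightarrow C^{0,\lambda}(\overline\Omega)$ and $\Omega_\epsilon$ is a full boundary layer, the boundary traces match too, $U_1|_{\partial\Omega} = U_2|_{\partial\Omega}$. Consequently the data vector $(U,\, 2\tilde g - 2\tilde h U)$ appearing in \eqref{liu24-system3} is the same for $i = 1, 2$, so both density pairs solve one and the same linear system.

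Finally I would invoke the standing hypothesis that $0$ is not an eigenvalue of $\mathbb{K}$, that is $\mathbb{K}$ is injective, to conclude $(\tilde\mu_1,\tilde\psi_1) = (\tilde\mu_2,\tilde\psi_2)$. Substituting this common density pair into the representation \eqref{liu24-new solution}, which holds on all of $\Omega$ and not merely on $\Omega_\epsilon$, then immediately yields $u[\sigma_1,h](x) = u[\sigma_2,h](x)$ for every $x \in \Omega$, completing the argument. The only genuine work beyond bookkeeping is the first step; once the density pairs are seen to satisfy a common $\sigma|_\Omega$-free equation, injectivity of $\mathbb{K}$ closes the proof at once.
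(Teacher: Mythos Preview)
Your proposal is correct and follows essentially the same route as the paper: both arguments observe that the density pairs $(\tilde\mu_i,\tilde\psi_i)$ furnished by Theorem~\ref{them1} satisfy the common $\sigma|_\Omega$-independent system \eqref{liu24-system3} with identical right-hand side (using $\sigma_1|_{\partial\Omega}=\sigma_2|_{\partial\Omega}$ and $U_1=U_2$ on $\overline\Omega_\epsilon$), invoke injectivity of $\mathbb{K}$ to conclude $(\tilde\mu_1,\tilde\psi_1)=(\tilde\mu_2,\tilde\psi_2)$, and then read off equality of $u[\sigma_1,h]$ and $u[\sigma_2,h]$ on all of $\Omega$ from the representation \eqref{liu24-new solution}. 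Your write-up is in fact slightly more explicit than the paper's in justifying why the Theorem~\ref{them1} densities automatically satisfy \eqref{liu24-system3} and why the boundary traces agree.
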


\begin{proof}
Denote by $(\tilde\mu_i,\tilde\psi_i)$ the solution to \eqref{liu24-system3} corresponding to $U_i(z)=u[\sigma_i,h](z)$ and $\tilde g_i(x)$. Then the difference functions
$$S\tilde\mu:=\tilde\mu_1-\tilde\mu_2, \quad
S\tilde\psi:=\tilde\psi_1-\tilde\psi_2$$
meet the linear system
\begin{eqnarray}\label{0813-2}
\left(
\begin{array}{cc}
\mathbb{K}_{11}^{\Omega\to\Omega_{\epsilon}} &\mathbb{K}_{12}^{\partial\Omega\to\Omega_{\epsilon}}\\
\mathbb{K}_{21}^{\Omega\to\partial\Omega} &\mathbb{I}+\mathbb{K}_{22}^{\partial\Omega\to\partial\Omega}
\end{array}
\right)
\left(
\begin{array}{c}
     S\tilde\mu  \\
     S\tilde\psi
\end{array}
\right)=
\left(
\begin{array}{c}
     0  \\
     0
\end{array}
\right),
\end{eqnarray}
noticing that $U_1=U_2$ in $\overline\Omega_\epsilon$.
By the assumption on the operator $\mathbb{K}$, \eqref{0813-2} has only zero solution $S\tilde \mu=0,\; S\tilde\psi=0$. That is, we have
$\tilde \mu_1=\tilde\mu_2, \;\tilde\psi_1=\tilde\psi_2$,
which means that the inversion input data $u[\sigma,h]|_{\overline\Omega_\epsilon}$, together with the boundary condition and known $(h,\sigma)|_{\partial\Omega}$, can uniquely determine the density pair $(\tilde\mu,\tilde\psi)$. Finally, the solution expression \eqref{liu24-new solution} using the density pair $(\tilde\mu_i,\tilde\psi_i)$ leads to
$u[\sigma_1,h](x)=u[\sigma_2,h](x),x\in\Omega$.
The proof is complete.
\end{proof}

\begin{rem}
The assumption that $0$ is NOT the eigenvalue of $\mathbb{K}$ is crucial for this result, which in fact gives some restrictions on $(\Omega_\epsilon, \Omega, \sigma|_{\partial\Omega})$.
\end{rem}

Based on this extension relation, now we can prove the uniqueness for our inverse problem taking $u|_{\Omega_\epsilon}$ as inversion input, using the same arguments for proving Lemma 2 in \cite{Richter}. The Levi function expression in fact gives an implementable scheme for the unique continuation.

\begin{them}\label{Them2}
We assume that
\begin{itemize}
    \item The boundary excitation $g(x)$ is chosen so that $u[\sigma, h](x)\in C^2(\Omega)$ and
\begin{equation}\label{liu11-01-04}
\min_{\overline\Omega}|\nabla u[\sigma, h](x)|\ge \kappa_1>0, \quad \forall \; (\sigma, h)\in \mathbb{A};
\end{equation}
   \item $0$ is NOT the eigenvalue of matrix operator $\mathbb{K}_{\tilde h}$;
\end{itemize}
Then the inversion input $u[\sigma,h](x)|_{\Omega_\epsilon}$ uniquely determine
$(\sigma, h)$ for known $\sigma|_{\partial\Omega}$.
\end{them}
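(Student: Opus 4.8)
The plan is to peel off the two unknowns successively, in the order $h$ then $\sigma$ prepared by the discussion preceding Theorem \ref{0814-2}, so that at each stage the problem reduces to a result already in hand. First I would argue that the boundary coefficient is determined outright. Since $\Omega_\epsilon$ is a one-sided collar of $\partial\Omega$ inside $\Omega$ and $u[\sigma,h]\in C^2(\Omega)\cap H^2(\Omega)$, the datum $u[\sigma,h]|_{\Omega_\epsilon}$ fixes both the trace $u|_{\partial\Omega}$ and the normal derivative $\partial_\nu u|_{\partial\Omega}$. With $\sigma|_{\partial\Omega}$ known and $u|_{\partial\Omega}$ nonvanishing on $\partial\Omega$ by Lemma \ref{lem1}, solving the Robin condition for $h$ gives the explicit formula \eqref{liu-240813-1}; hence two admissible pairs producing the same data necessarily share the same $h$, and therefore the same $\tilde h=h/\sigma|_{\partial\Omega}$ on $\partial\Omega$.

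With the common $h$ (equivalently $\tilde h$) now fixed, I would invoke Theorem \ref{0814-2} to promote the local identity of the data to a global identity of the solutions. Writing $U_i=u[\sigma_i,h]$, the hypotheses $U_1=U_2$ on $\Omega_\epsilon$, $\sigma_1|_{\partial\Omega}=\sigma_2|_{\partial\Omega}$, and the spectral assumption that $0$ is not an eigenvalue of $\mathbb{K}_{\tilde h}$ force the two density pairs solving \eqref{liu24-system3} to coincide, and the representation \eqref{liu24-new solution} then yields $u[\sigma_1,h]=u[\sigma_2,h]$ throughout $\Omega$. This is precisely Theorem \ref{0814-2}, and it is the step in which the Levi-function representation takes over the role played by analytic continuation in the earlier literature.

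Finally, with a single function $u:=u[\sigma_1,h]=u[\sigma_2,h]\in C^2(\Omega)$ now defined on all of $\Omega$, each $\sigma_i$ solves the transport equation \eqref{liu17}, so their difference $w:=\sigma_1-\sigma_2$ satisfies $\nabla w\cdot\nabla u+w\,\Delta u=0$ in $\Omega$ with $w=0$ on $\partial\Omega$. I would then reproduce the characteristic integration of Lemma 2 in \cite{Richter}: the nondegeneracy \eqref{liu11-01-04} rules out critical points of $\nabla u$, and because $u$ obeys the elliptic equation its extrema lie on $\partial\Omega$, so every integral curve of $\nabla u$ through an interior point reaches $\partial\Omega$; along each such curve $w$ solves the linear ODE $\dot w=-(\Delta u)\,w$ with vanishing boundary value, whence $w\equiv 0$ and $\sigma_1=\sigma_2$, completing the uniqueness.

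I expect the genuine difficulty to lie in the second step rather than the third. The third step is Richter's classical characteristic argument and is essentially mechanical once $u$ is known on all of $\Omega$; the second rests on the spectral hypothesis on $\mathbb{K}_{\tilde h}$, which is not a priori verifiable and silently constrains the triple $(\Omega_\epsilon,\Omega,\sigma|_{\partial\Omega})$, as flagged in the remark following Theorem \ref{0814-2}. A secondary point I would take care to justify in the third step is that the integral curves of $\nabla u$ genuinely foliate $\Omega$ all the way to $\partial\Omega$, with no curve trapped in the interior; this is exactly where the uniform lower bound $\kappa_1>0$ together with the boundary-extremum property of $u$ is indispensable.
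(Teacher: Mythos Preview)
Your proposal is correct and follows essentially the same three-step approach as the paper: recover $h$ from the Robin condition via \eqref{liu-240813-1} and Lemma~\ref{lem1}, invoke Theorem~\ref{0814-2} to extend $u[\sigma_1,h]=u[\sigma_2,h]$ from $\Omega_\epsilon$ to all of $\Omega$, and then apply Richter's characteristic argument to the homogeneous transport equation for $\sigma_1-\sigma_2$. The only cosmetic difference is that the paper first writes the transport equation for $\Sigma=\sigma_1-\sigma_2$ with a right-hand side depending on $w=u[\sigma_1,h]-u[\sigma_2,h]$ and then kills that source by Theorem~\ref{0814-2}, whereas you establish $w\equiv 0$ first so the equation is homogeneous from the outset; the logic is identical.
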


\begin{proof}
Denote by $u[\sigma_i, h_i](x)$ the solution to \eqref{liu11-01} corresponding to $(\sigma_i, h_i)\in\mathbb{A}$ for $i=1,2$. We shall prove $(\sigma_1, h_1)=(\sigma_2, h_2)$ provided $u[\sigma_1, h_1](x)=u[\sigma_2, h_2](x)$ for $x\in\Omega_\epsilon$.

Since $u[\sigma_1, h_1](x)=u[\sigma_2, h_2](x)$ for $x\in\overline\Omega_\epsilon$, we then have from   $\sigma_1|_{\partial\Omega}=\sigma_2|_{\partial\Omega}$ and the boundary condition that
    $$
    (h_1(x)-h_2(x))u[h_1,\sigma_1](x)=h_2(x)(u[h_1,\sigma_1](x)-u[h_2,\sigma_2](x))=0, \quad x\in\partial \Omega.
    $$
By Lemma \ref{lem1}, we know that $u[h_1,\sigma_1](x)<0$ for $x \in \partial \Omega$. So  $h_1(x)=h_2(x):=h_*(x)$ in $\partial\Omega$.

Next we will prove $\sigma_1(x)=\sigma_2(x)$ for $x\in \Omega$. Define
$$\Sigma(x):=\sigma_1(x)-\sigma_2(x),\quad w(x):=u[\sigma_1, h_*](x)-u[\sigma_2, h_*](x),\quad x\in\Omega.$$
Then $\Sigma(x)$ meets the system
\begin{eqnarray}\label{liu11-01-05}
    \begin{cases}
        \nabla\Sigma\cdot \nabla u[\sigma_1,h_*]+\Sigma \;\Delta u[\sigma_1,h_*]=-\nabla\sigma_2\cdot\nabla w-\sigma_2\Delta w, &x\in\Omega\\
        \Sigma=0, &x\in\partial\Omega.
    \end{cases}
\end{eqnarray}
Since $\min_{\overline\Omega}|\nabla u[\sigma_1, h_*](x)|\ge \kappa_1>0$ by \eqref{liu11-01-04}, it follows from Lemma 2 in \cite{Richter} that
\begin{equation}
\|\Sigma\|_\infty\le C(u[\sigma_1,h_*])\left\{\max_{\partial\Omega}|\Sigma(x)|+
\frac{[u[\sigma_1,h_*]]}{\kappa_1^2}\|\nabla\sigma_2\cdot\nabla w+\sigma_2\Delta w\|_\infty\right\},
\end{equation}
where $[u[\sigma_1,h_*]]:=\max_{\overline\Omega}u[\sigma_1,h_*](x)-
\min_{\overline\Omega}u[\sigma_1,h_*](x)$ is bounded from Lemma 2.3. So the boundary condition in \eqref{liu11-01-05} and  $0\equiv w(x)\in C^2(\overline\Omega)$ from Theorem 2.3 yields
$\Sigma(x)\equiv 0$ in $\Omega$.
The proof is complete.
\end{proof}

The condition \eqref{liu11-01-04} for the forward problem is crucial to the unique determination of $\sigma$. A sufficient condition to ensure this lower positive bound has been given for
$-\nabla\cdot (\sigma\nabla u)=f(x)$
with nonzero source $f(x)$ and homogeneous Dirichlet boundary condition for $u$ in \cite{Richter}. However, in our case of $f(x)\equiv 0$ and Robin boundary condition, we have to assume it. Some sufficient conditions on $(\sigma,h,g)$ to ensure \eqref{liu11-01-04} is still unclear.


\section{Regularizing scheme with error estimates}

To establish convergence rate of the regularizing solution, we assume that $(\sigma,h)\in \mathbb{A}$ and the specified input data $g(x)$ have the required regularity such that $u\in C^{2,\alpha_0}(\Omega)$ for $\alpha_0\in (0,1)$, see \cite{Gilbert} for the details for ensuring such a regularity. Then we have the pointwise estimates
\begin{eqnarray}
    |U(x)-U(y)|,|\nabla U(x)-\nabla U(y)|<C |x-y|,\quad |\Delta U(x)-\Delta U(y)|<C|x-y|^{\alpha_0},\; x,y\in \overline{\Omega}_{\epsilon}
\end{eqnarray}
for the exact inversion input data.

Since our inversion scheme for recovering $(\sigma,h)$ is decomposed into two steps: firstly recover $h(x)$ using boundary condition and then recover $\sigma(x)$ using the governed system, we need to construct the regularizing scheme for these two steps respectively.
Notice, this two-step scheme is closely related which means that the reconstruction error in the first step has essential influence on the recovery of $\sigma(x)$, since the Robin boundary condition is applied to determine the density pair for the Levi representation from which the solution $u$ and then the conductivity is computed.

As for the first step, we take the same scheme as that applying in \cite{Wangyuchan} where the final value $u(x,T)$ for $x\in\Omega$ are applied to recover the time-independent boundary impedance coefficient $\sigma(x)$ for a parabolic system, which is essentially to deal with the  ill-posedness of the normal derivative in Robin boundary condition. Since the normal derivative $\frac{\partial u}{\partial \nu}|_{\partial\Omega}$ depends only on the values of $u$ in the neibourhood of $\partial\Omega$, the regularizing scheme proposed there is still applicable for our case that $u$ is only specified in $\Omega_{\epsilon}\subset\subset\Omega$, rather than the whole domain $\Omega$.

More precise, for noisy data $U^\delta\approx U$ specified in $\Omega_{\epsilon}$, we firstly introduce the mollification operator $R_{\alpha}$ with some regularizing parameter $\alpha>0$ by
\begin{eqnarray*}
    R_{\alpha}[U^\delta](x):=\int_{\mathbb{R}^n}\rho_{\alpha}(|x-y|)\tilde U^\delta(y)dy=\int_{|x-y|\le\alpha}\rho_{\alpha}(|x-y|)\tilde U^\delta(y)dy, \quad x\in \Omega_\epsilon,
\end{eqnarray*}
where $\tilde U^\delta$ is the extension of
$U^\delta$ from $\Omega_\epsilon$ to $\mathbb{R}^n$, which satisfies $\tilde U^\delta=U^\delta$ in $\Omega_{\epsilon}$ and $\hbox{supp} \;\tilde U^\delta \subset V$ for some $V$ containing $\Omega_\epsilon$, while the kernel function
$$\rho_{\alpha}(t):=\frac{1}{\alpha^n}\rho(\frac{t}{\alpha}), \quad t>0$$
for the specified function $0\le \rho(\cdot) \in C_0^{\infty}(\mathbb R^1)$ satisfying supp $\rho \subset (0,1)$ and $\int_0^{\infty}\rho(t)tdt=\frac{1}{2\pi} $.

Upon the above scheme,  the regularizing solution for  approximating  $h(x)$ using $U^\delta(x)$ can be expressed as
\begin{equation}\label{Liu24-regular-scheme-h}
    h^{\alpha,\delta}(x)=\frac{g(x)-\sigma(x)\partial_{\nu}R_{\alpha}[U^{\delta}](x)}{U(x)},\quad x \in \partial \Omega
\end{equation}
due to $U|_{\partial\Omega}<0$.
Using the estimate \cite{Kress}
$$\norm{\partial_{\nu}R_{\alpha}[U^{\delta}]-\partial_{\nu}U}_{L^2(\partial \Omega)}\le \norm{\partial_{\nu}R_{\alpha}[U^{\delta}]-\partial_{\nu}U}_{L^2(\partial \Omega_\epsilon)} \le C\norm{R_{\alpha}[U^{\delta}]-U}_{H^2(\Omega_\epsilon)},$$
it follows from the definition \eqref{Liu24-regular-scheme-h} that
\begin{align}
\norm{h^{\alpha,\delta}-h}_{L^2(\partial\Omega)}
&\le\norm{h^{\alpha,\delta}-h^{\alpha,0}}_{L^2(\partial\Omega)}+\norm{h^{\alpha,0}-h}_{L^2(\partial\Omega)} \nonumber\\
&\le \norm{\frac{\sigma}{U}}_{L^2(\partial\Omega)}\left(\norm{\partial_{\nu}R_{\alpha}[U^{\delta}]-\partial_{\nu}R_{\alpha}[U]}_{L^2(\partial \Omega)}+\norm{\partial_{\nu}R_{\alpha}[U]-\partial_{\nu}U}_{L^2(\partial \Omega)}\right)\nonumber\\
&\le C\left(\norm{R_{\alpha}[U^{\delta}-U]}_{H^2(\Omega_\epsilon)}+\norm{R_{\alpha}[U]-U}_{H^2( \Omega_\epsilon)}\right)\nonumber\\
&\le C\left(\norm{R_{\alpha}}_{\mathcal{L}(H^1(\Omega_\epsilon),H^2(\Omega_\epsilon))}\delta+\norm{R_{\alpha}[U]-U}_{H^2(\Omega_\epsilon)}\right).
\end{align}
On the other hand, we also have the bounds  $$\norm{R_{\alpha}}_{\mathcal{L}(H^1(\Omega_\epsilon),H^2(\Omega_\epsilon))}\le C\frac{1}{\alpha^{n/2}}, \quad \norm{R_{\alpha}[U]-U}_{H^2(\Omega_\epsilon)}\le C\alpha^{\lambda_0},$$ see section 6 and
\cite{Wangyuchan} for details. So
we have established the following error estimate
\begin{align}\label{liu24-error-h}
    \norm{h^{\alpha(\delta),\delta}-h}_{L^2(\partial\Omega)}\le C \delta^{\lambda_0/(n/2+\lambda_0)},
\end{align}
where the optimal regularizing parameter is chosen in terms of the noise level by $\alpha(\delta)=O(\delta^{1/(n/2+\lambda_0)})$,
with $\lambda_0:=\min\{\alpha_0,1/2\}$.

For $h^{\alpha(\delta),\delta}(x)$ determined by (\ref{Liu24-regular-scheme-h}), now we can construct the regularizing solution for $\sigma(x)|_{\Omega}$ corresponding to the system \eqref{liu11-01}-\eqref{liu24-u_error}.
Since the well-posedness of the linear integral system \eqref{liu24-system0} with respect to the density pair have been proved by Theorem \ref{them1}, we can safely apply Levi function scheme to solve inverse problem for $\sigma(x)$.

So we can firstly solve $(\tilde\mu(z),\tilde\psi(x))$ from \eqref{liu24-system3} and then insert $(\tilde\mu(z),\tilde\psi(x))$ into the first equation of \eqref{liu24-system}, yielding a linear equation with respect to unknown variable $\sigma(x)|_{\Omega}$.
However, the first equation in \eqref{liu24-system3} is an integral equation of the first kind which  reveals essentially the ill-posedness  of the extension of $\Omega_\epsilon$ to the whole domain $\Omega$. To overcome this deficiency and yield the stable solution $(\tilde\mu(x),\tilde\psi(x))$,
we modify \eqref{liu24-system3} into its Tikhonov regularization version
\begin{eqnarray}\label{liu24-regular-system4}
(\beta\mathbb{I}+\mathbb{K}^*\mathbb{K})
\left[\begin{array}{c}
\tilde\mu\\
\tilde\psi
\end{array}\right]
\left(\begin{array}{c}
z\\x
\end{array}\right)=\mathbb{K}^*
\left(\begin{array}{c}
U(z)\\
2\tilde g(x)-2\tilde h(x)U(x)\end{array}\right), \quad (z,x)\in \Omega_{\epsilon}\times\partial\Omega
\end{eqnarray}
for regularizing parameter $\beta>0$, where the operator $\mathbb{K}$ is defined in Theorem 2.3.
Hence, a regularizing density function pair $(\mu^{\beta},\psi^{\beta})$ will be solved stably from \eqref{liu24-regular-system4} for specified $(U,\tilde g, \tilde h)$.

Inserting this density function pair $(\mu^{\beta},\psi^{\beta})$ into the first
equation of system \eqref{liu24-system}, a
linear integral equation can obtained with respect to $\sigma(x)$ as
\begin{eqnarray}\label{liu24-sigma-system5}
\sigma(x)\tilde\mu(x)-\left(\int_{\Omega}\tilde \mu(y)\nabla_x \Phi(x,y)dy+\int_{\partial\Omega}\tilde \psi(y)\nabla_x \Phi(x,y)ds(y)\right)\cdot\nabla \sigma(x)=0.
\end{eqnarray}
For $i=1,\cdots,M$, $\sigma(x)$ can be approximated by $\sigma(x)\approx \Sigma_{i=1}^{M}\alpha_i l_i(x)$ with some basis functions $l_i(x)$ and coefficients $\alpha_i$. Using the known numerical formulas (such as finite element method), this linear system can be solved numerically with known boundary data $\sigma(x)|_{\partial\Omega}$.

However, the inversion input $U$ in $\Omega_\epsilon$ is in fact given by its noisy form $U^\delta$ and the boundary impedance coefficient $h(x)$ involved in the right-hand side of \eqref{liu24-regular-system4} is the reconstructed $h^{\alpha(\delta),\delta}$ with error. So we need to estimate the deduced error coming from $U^\delta$ and $h^{\alpha(\delta),\delta}$.

For the regularizing solution $h^{\alpha(\delta),\delta}$ given by (\ref{Liu24-regular-scheme-h}) with  specified $\alpha= \alpha(\delta)=O(\delta^{1/(2+\gamma^*)})$,
the regularizing system \eqref{liu24-regular-system4}
for the noisy data $(U^\delta, h^{\alpha(\delta),\delta})$ becomes
\begin{eqnarray}\label{liu24-regular-system4-delta}
(\beta\mathbb{I}+\mathbb{K}^*\mathbb{K})
\left[\begin{array}{c}
\tilde\mu^{\alpha,\beta,\delta}\\
\tilde\psi^{\alpha,\beta,\delta}
\end{array}\right]
\left(\begin{array}{c}
z\\x
\end{array}\right)=\mathbb{K}^*
\left(\begin{array}{c}
U^{\delta}(z)\\
2\tilde g(x)-2\tilde h^{\alpha,\delta}(x)U^{\delta}(x)\end{array}\right), \quad (z,x)\in \Omega_{\epsilon}\times\partial\Omega
\end{eqnarray}
from which we construct the density function $(\tilde\mu^{\alpha,\beta,\delta},\tilde\psi^{\alpha,\beta,\delta})$ from (\ref{liu24-regular-system4-delta}).

Define $\varphi(z,x):=(\tilde\mu(z),\tilde\psi(x))^\top$ for $(z,x)\in \Omega\times\partial\Omega$.
To estimate $\|\varphi^{\alpha,\beta,\delta}
-\varphi\|$, we utilize
\begin{align}\label{Liu24-error-estimate}
  \norm{\varphi^{\alpha,\beta,\delta}-\varphi}_{L^2(\Omega\times\partial\Omega)}\le&
  \norm{\varphi^{\alpha,\beta,\delta}-\varphi^{\alpha,\beta,0}}_{L^2(\Omega\times\partial\Omega)}
  +\norm{\varphi^{\alpha,\beta,0}-\varphi}_{L^2(\Omega\times\partial\Omega)}.
\end{align}

Define $\mathcal{R}_{\beta}:=(\beta\mathbb{I}+\mathbb{K}^*\mathbb{K})^{-1}\mathbb{K}^*$. Then it is easy to see
\begin{align*}
\varphi^{\alpha,\beta,\delta}-\varphi^{\alpha,\beta,0}
 &=\mathcal{R}_{\beta}\left(\begin{array}{c}
U^{\delta}(z)-U(z)\\
2\tilde h^{\alpha,0}(x)U^{\delta}(x)-2\tilde h^{\alpha,\delta}(x)U(x)\end{array}\right)\\
&=\mathcal{R}_{\beta}\left(\begin{array}{c}
U^{\delta}(z)-U(z)\\
2\left(\tilde h^{\alpha,0}(x)-\tilde h^{\alpha,\delta}(x)\right)U(x)\end{array}\right)\\
&=\mathcal{R}_{\beta}\left(\begin{array}{c}
U^{\delta}(z)-U(z)\\
2\frac{U(x)}{\sigma(x)}\left(h^{\alpha,0}(x)-h^{\alpha,\delta}(x)\right)\end{array}\right)=\mathcal{R}_{\beta}\left(
\begin{array}{c}
U^{\delta}(z)-U(z)\\
2\partial_\nu R_\alpha[U-U^\delta](x)
\end{array}
\right),
\end{align*}
where we apply $U^\delta|_{\partial \Omega}=U|_{\partial \Omega}$ and the expression of $h^{\alpha,\delta}$.
So the first term  in the right-hand side of \eqref{Liu24-error-estimate} has the estimate
 \begin{align}
     \label{Liu24-error-estimate1}
   \norm{\varphi^{\alpha,\beta,\delta}-\varphi^{\alpha,\beta,0}}_{L^2(\Omega\times\partial\Omega)}
   &\le \frac{1}{2\sqrt{\beta}}\delta+\frac{1}{\sqrt{\beta}}\norm{\partial_\nu R_\alpha[U-U^\delta]}_{L^2(\partial\Omega)}\nonumber\\
   &\le \frac{1}{2\sqrt{\beta}}\delta+\frac{C}{\sqrt{\beta}}\norm{R_\alpha[U-U^\delta]}_{H^2(\Omega_\epsilon)}\nonumber\\
   &\le \frac{1}{2\sqrt{\beta}}\delta+\frac{C}{\sqrt{\beta}}\norm{R_\alpha}_{\mathcal{L}(H^1(\Omega_\epsilon), H^2(\Omega_\epsilon))}\delta\le \frac{1}{2\sqrt{\beta}}\delta+\frac{C}{\sqrt{\beta}}\frac{1}{\alpha^{n/2}}\delta
 \end{align}
from the standard estimate on Tikhonov regularization.
Introduce $$\eta^{\alpha,0}(z,x):=(U(z),2\tilde g(x)-2\tilde h^{\alpha,0}(x)U(x))^\top, \;
\eta(z,x):=(U(z),2\tilde g(x)-2\tilde h(x)U(x))^\top$$
for $(z,x)\in \Omega_\epsilon\times\partial\Omega$.
Then it follows from
\begin{eqnarray}
(\beta\mathbb{I}+\mathbb{K}^{*}\mathbb{K})\varphi^{\alpha,\beta,0}(z,x)
=\mathbb{K}^{*}\eta^{\alpha,0}(z,x)
\end{eqnarray}
and $\mathbb{K}\varphi(z,x)=\eta(z,x)$ that
 \begin{align}\label{Liu24-error-estimate2}
  \norm{\varphi^{\alpha,\beta,0}-\varphi}_{L^2(\Omega\times\partial\Omega)}
  &=\norm{\mathcal{R}_{\beta} (\eta^{\alpha,0}-\eta)+(\mathcal{R}_\beta\eta-\varphi)}_{L^2(\Omega\times\partial\Omega)}\nonumber\\
&\le \norm{\mathcal{R}_{\beta}}\norm{\eta^{\alpha,0}-\eta}_{L^2(\Omega_\epsilon\times\partial\Omega)}
  +\norm{\mathcal{R}_{\beta}\mathbb{K}\varphi-\varphi}_{L^2(\Omega\times\partial\Omega)}.
\end{align}
It is easy to see $\norm{\eta^{\alpha,0}-\eta}\le 2\norm{\frac{U}{\sigma}}_{L^\infty(\partial\Omega)}\norm{h-h^{\alpha,0}}$. So \eqref{Liu24-error-estimate2} becomes
\begin{align}\label{Liu24-error-estimate3}
\norm{\varphi^{\alpha,\beta,0}-\varphi}_{L^2(\Omega\times\partial\Omega)}
  &\le 2\norm{\frac{U}{\sigma}}_{L^\infty(\partial\Omega)}\frac{1}{2\sqrt{\beta}}\norm{h-h^{\alpha,0}}_{L^2(\partial\Omega)}+
  \norm{\mathcal{R}_{\beta}\mathbb{K}\varphi-\varphi}\nonumber\\
  &\le \norm{\frac{U}{\sigma}}_{L^\infty(\partial\Omega)}\frac{1}{\sqrt{\beta}}\alpha^{\lambda_0}+
  \norm{\mathcal{R}_{\beta}\mathbb{K}\varphi-\varphi}.
\end{align}
Finally, the error estimate \eqref{Liu24-error-estimate} can be bounded from \eqref{Liu24-error-estimate1} and \eqref{Liu24-error-estimate3} to yield
\begin{align}\label{Liu24-error-estimate-beta}
  \norm{\varphi^{\alpha,\beta,\delta}-\varphi}_{L^2(\Omega\times\partial\Omega)}
  &\le \frac{C(\Omega,U|_{\partial \Omega},\sigma|_{\partial \Omega})}{\sqrt{\beta}}\left(\delta+\frac{1}{\alpha^{n/2}}\delta+\alpha^{\lambda_0}\right)+
  \norm{\mathcal{R}_{\beta}\mathbb{K}\varphi-\varphi}\nonumber\\
  &\le \frac{C(\Omega,U|_{\partial \Omega},\sigma|_{\partial\Omega})}{\sqrt{\beta}}\delta^{\lambda_0/(n/2+\lambda_0)}+
  \norm{\mathcal{R}_{\beta}\mathbb{K}\varphi-\varphi}.
\end{align}
with the optimal parameter  $\alpha(\delta)=O(\delta^{1/(n/2+\lambda_0)})$.

Based on this estimate, we have proven

\begin{them}
If the regularizing parameters $\alpha, \beta\to 0$ as $\delta\to 0$ is chosen in the way
\begin{equation}
\alpha(\delta)=O(\delta^{1/(n/2+\lambda_0)}),\quad \frac{\delta^{\lambda_0/(n/2+\lambda_0)}}{\sqrt{\beta}}\to 0,
\end{equation}
then the density pair has the convergence
$\varphi^{\alpha,\beta,\delta}\to \varphi$
as $\beta,\delta\to 0$. Moreover, the error bound is given by
\eqref{Liu24-error-estimate-beta}.
\end{them}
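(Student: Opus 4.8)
The plan is to read the convergence directly off the error bound \eqref{Liu24-error-estimate-beta}, which has already been assembled from the intermediate estimates \eqref{Liu24-error-estimate1} and \eqref{Liu24-error-estimate3}. The right-hand side of \eqref{Liu24-error-estimate-beta} splits into two conceptually distinct contributions: a \emph{data-propagation} term $\frac{C}{\sqrt{\beta}}\,\delta^{\lambda_0/(n/2+\lambda_0)}$, which carries the combined noise coming from $U^\delta$ and from the already-reconstructed Robin coefficient $h^{\alpha,\delta}$, and a \emph{regularization-approximation} term $\norm{\mathcal{R}_\beta\mathbb{K}\varphi-\varphi}$, which measures how faithfully the Tikhonov-filtered operator reproduces the exact density pair. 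I would prove the theorem by showing that each of these tends to zero under the stated coupling of $(\alpha,\beta,\delta)$.

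For the data-propagation term the work is essentially done: the choice $\alpha(\delta)=O(\delta^{1/(n/2+\lambda_0)})$ has already been used to collapse $\delta+\alpha^{-n/2}\delta+\alpha^{\lambda_0}$ into $\delta^{\lambda_0/(n/2+\lambda_0)}$, which is the optimal balance between the amplification factor $\alpha^{-n/2}$ of the mollifier $R_\alpha$ and its approximation error $\alpha^{\lambda_0}$. Dividing by $\sqrt{\beta}$ and invoking the hypothesis $\delta^{\lambda_0/(n/2+\lambda_0)}/\sqrt{\beta}\to 0$ sends this term to zero immediately.

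The main obstacle is the regularization-approximation term $\norm{\mathcal{R}_\beta\mathbb{K}\varphi-\varphi}$, for which I would invoke the standard spectral theory of Tikhonov regularization applied to the exact operator equation $\mathbb{K}\varphi=\eta$. Writing $\mathcal{R}_\beta\mathbb{K}=(\beta\mathbb{I}+\mathbb{K}^*\mathbb{K})^{-1}\mathbb{K}^*\mathbb{K}$ and diagonalizing through the spectral family $\{E_\lambda\}$ of the self-adjoint operator $\mathbb{K}^*\mathbb{K}$, the filter $\lambda\mapsto\lambda/(\beta+\lambda)$ converges to $1$ on the spectrum away from the origin, so that $\norm{\mathcal{R}_\beta\mathbb{K}\varphi-\varphi}^2=\int_0^{\norm{\mathbb{K}}^2}\big(\beta/(\beta+\lambda)\big)^2\,d\norm{E_\lambda\varphi}^2$. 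Because $\varphi$ is by construction the \emph{exact} solution of $\mathbb{K}\varphi=\eta$, it lies in $N(\mathbb{K})^\perp$, and the standing hypothesis that $0$ is not an eigenvalue of $\mathbb{K}$ (Theorem \ref{0814-2}) forces $N(\mathbb{K})=\{0\}$; hence the spectral measure $d\norm{E_\lambda\varphi}^2$ carries no atom at $\lambda=0$, and dominated convergence yields $\norm{\mathcal{R}_\beta\mathbb{K}\varphi-\varphi}\to 0$ as $\beta\to 0$ with no spurious null-space remainder. This injectivity step is precisely where the eigenvalue assumption is indispensable.

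Combining the two pieces, both summands on the right of \eqref{Liu24-error-estimate-beta} vanish in the joint limit $\delta,\beta\to 0$ with $\alpha=\alpha(\delta)$, giving $\varphi^{\alpha,\beta,\delta}\to\varphi$ in $L^2(\Omega\times\partial\Omega)$, with the quantitative rate recorded in \eqref{Liu24-error-estimate-beta}. The one delicate point I would check carefully is that the two limiting processes are decoupled in the correct direction: $\beta$ may be driven to zero only as slowly as the constraint $\delta^{\lambda_0/(n/2+\lambda_0)}/\sqrt{\beta}\to 0$ allows, so that the regularization error is permitted to decay while the $\beta^{-1/2}$-amplified data error stays controlled. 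This admissible parameter window is exactly what the stated hypotheses encode, and it is the interplay between the two that makes the convergence nontrivial rather than a mere formal consequence of $\beta\to 0$.
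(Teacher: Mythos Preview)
Your proposal is correct and follows essentially the same approach as the paper: the paper simply states the theorem as an immediate consequence of the already-derived bound \eqref{Liu24-error-estimate-beta}, treating the convergence $\norm{\mathcal{R}_\beta\mathbb{K}\varphi-\varphi}\to 0$ as a well-known fact about Tikhonov regularization (cf.\ the remark following the theorem), whereas you supply the standard spectral justification explicitly and correctly tie it to the injectivity hypothesis from Theorem~\ref{0814-2}.
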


\begin{rem}
This theorem only gives the convergence property without convergence rate, although we have the convergence rate for Robin coefficient with optimal $\alpha=\alpha(\delta)$. It is well-known that, without the {\it a-prior} assumption on the exact solution $\varphi$, the convergence rate of  $\norm{\mathcal{R}_{\beta}\mathbb{K}\varphi-\varphi}\to 0$ can be arbitrarily slow. Since $\varphi$ is the auxiliary function introduced for the Levi function scheme, it is very hard to give a reasonable assumption to yield  convergence rate.
\end{rem}

Now we will estimate the reconstruction error of $\sigma$ using noisy inversion input $U^\delta$ by  \eqref{liu24-sigma-system5} corresponding to the noisy density function $(\tilde\mu^{\alpha,\beta,\delta},\tilde\psi^{\alpha,\beta,\delta})$.
Denote by $u$ the exact solution for exact density pair $\varphi$ from exact inversion input $U$.
By \eqref{liu24-new solution}, although the reconstruction error of $\sigma^{\alpha,\beta,\delta}-\sigma$ can be estimated by equation \eqref{liu24-sigma-system5} directly in terms of the density pair error, we still derive the error $\sigma^{\alpha,\beta,\delta}-\sigma$ theoretically in terms of $u^{\alpha,\beta,\delta}-u$, with $(u^{\alpha,\beta,\delta}, u)$ meeting
\begin{eqnarray}\label{Liu24-hyperbolic equation}
\begin{cases}
\nabla \sigma^{\alpha,\beta,\delta}\cdot\nabla u^{\alpha,\beta,\delta}+\sigma^{\alpha,\beta,\delta} \Delta u^{\alpha,\beta,\delta}=0,\quad x\in\Omega,\\
\nabla \sigma\cdot\nabla u+\sigma \Delta u=0,\quad x\in\Omega,\\
\sigma^{\alpha,\beta,\delta}=\sigma=\sigma^*, \quad x\in\partial\Omega.
\end{cases}
\end{eqnarray}

That is, the reconstruction error estimate on $\sigma$ can be transformed into the sensitivity analysis on  $\sigma$ with respect to $u$ based on \eqref{Liu24-hyperbolic equation} with known $\sigma|_{\partial\Omega}$. The result in space $L^{\infty}$ has been derived in \cite{Richter}, however it is unsuitable for our inverse problem where the error is measured by $L^2$-norm. In the sequel, the sensitivity analysis of identifying $\sigma$ will be derived in $L^2$ space under some restrictions on $u$ referring to
the proof process in \cite{Richter}. To this end, we firstly need to estimate $u^{\alpha,\beta,\delta}-u$ in terms of $\varphi^{\alpha,\beta,\delta}-\varphi$.

\begin{lem}\label{lem2-1}
 For $u^{\alpha,\beta,\delta}$  corresponding to the density pair
 $\varphi^{\alpha,\beta,\delta}$ and $u$ corresponding to the density pair $\varphi$, which are given by (\ref{liu24-new solution}), respectively, it follows that
 $$\|u^{\alpha,\beta,\delta}-u\|_{H^2(\Omega)}\le
 C_0\|\varphi^{\alpha,\beta,\delta}-\varphi\|_{L^2}.$$
\end{lem}
\begin{proof}
By the solution expression (\ref{liu24-new solution}), we have for $x\in\Omega$ that
\begin{equation}\label{liu240707-1}
 u^{\alpha,\beta,\delta}(x)-u(x)=\int_{\Omega}(\tilde\mu^{\alpha,\beta,\delta}(y)-\tilde\mu(y))\Phi(x,y)dy+\int_{\partial\Omega}(\tilde\psi^{\alpha,\beta,\delta}(y)-\tilde\psi(y))\Phi(x,y)ds(y).
\end{equation}

For the first term in the right-hand side of (\ref{liu240707-1}), which is a volume potential with respect to the density $\tilde\mu^{\alpha,\beta,\delta}-\tilde\mu$, it follows from Theorem 8.2 in  \cite{Colton1} that
\begin{equation}\label{liu240707-2}
\norm{\int_{\Omega}(\tilde\mu^{\alpha,\beta,\delta}(y)-\tilde\mu(y))\Phi(\cdot,y)dy}_{H^2(\Omega)}\le c_1 \|\tilde\mu^{\alpha,\beta,\delta}-\tilde\mu\|_{L^2(\Omega)}.
\end{equation}

For the second term
$$w(x):=\int_{\partial\Omega}(\tilde\psi^{\alpha,\beta,\delta}(y)-\tilde\psi(y))\Phi(x,y)ds(y), \quad x\in \overline \Omega,$$
the regularity of the single-layer surface potential (Theorem 3.6, \cite{Colton1}) ensures $\|w\|_{H^1(\partial\Omega)}\le c\|\tilde\psi^{\alpha,\beta,\delta}-\tilde\psi\|_{L^2(\partial\Omega)}$.  However, the trace theorem says $\|w\|_{H^1(\Omega)}\le c_2\|w\|_{H^{1/2}(\partial\Omega)}$. So we have
$
\norm{w}_{H^1(\Omega)}\le c\norm{\tilde\psi^{\alpha,\beta,\delta}-\tilde\psi}_{L^2(\partial\Omega)}$,
which leads to
\begin{equation}\label{liu240707-3}
\norm{w}_{H^2(\Omega)}\le c_3\norm{\tilde\psi^{\alpha,\beta,\delta}-\tilde\psi}_{L^2(\partial\Omega)}
\end{equation}
due to $\Delta w(x)+k^2w(x)=0$ in $\Omega$. Combining (\ref{liu240707-1})-(\ref{liu240707-3}) together, we have
\begin{align}\label{liu240707-4}
\norm{u^{\alpha,\beta,\delta}-u}_{H^2(\Omega)}&\le
c_1 \|\tilde\mu^{\alpha,\beta,\delta}-\tilde\mu\|_{L^2(\Omega)}+c_3\norm{\tilde\psi^{\alpha,\beta,\delta}-\tilde\psi}_{L^2(\partial\Omega)}
\nonumber\\&\le
\max\{c_1,c_3\}\norm{\tilde \varphi^{\alpha,\beta,\delta}-\tilde \varphi}_{L^2(\Omega\times \partial\Omega)}
\le
C_0\norm{\varphi^{\alpha,\beta,\delta}-\varphi}_{L^2(\Omega\times \partial\Omega)}\quad
\end{align}
for known $\sigma|_{\partial\Omega}$. The proof is complete.
\end{proof}

The next result is to estimate $\sigma^{\alpha,\beta,\delta}-\sigma$  by $u^{\alpha,\beta,\delta}-u$. In terms of  (\ref{Liu24-hyperbolic equation}), the solution of the conductivity is essentially to solve $\sigma$ from the hyperbolic equation of the first order along the characteristic lines.
\begin{lem}\label{lem2}
For given $g(x)<0$, assume the solution to the direct problem satisfies
\begin{equation}
\inf_{\Omega}|\nabla u[\sigma,h,g]|\ge \kappa_1>0
\end{equation}
uniformly for $(h,\sigma)\in \mathbb{A}$. Then we have
\begin{align}\label{liu24-sensitivity-sigmaL2}
&\norm{\sigma^{\alpha,\beta,\delta}-\sigma}_{L^2(\Omega)}\nonumber\\
\le&
\;D\left\{   \norm{\nabla \sigma^{\alpha,\beta,\delta}}_{L^2(\Omega)}\norm{\nabla(u-u^{\alpha,\beta,\delta})}_{L^2(\Omega)}
+\norm{\sigma^{\alpha,\beta,\delta}}_{L^2(\Omega)}\norm{\Delta(u-u^{\alpha,\beta,\delta})}_{L^2(\Omega)}\right\},
\end{align}
where the constant
$$
D=D(\mathbb{A},\Omega,g,k_1)= \exp\left(\frac{\norm{\sigma}_{L^{\infty}}C_b}{k_1\sigma_{-}}\right)\frac{C_b^{1/2}}{k_1^{3/2}}\abs{\Omega}^{1/2}
,\quad C_b=C_{+}(g,h_{-})-C_{-}(\sigma,h_{-},g,f).$$
\end{lem}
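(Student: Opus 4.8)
The plan is to reduce the lemma to a first-order linear transport (characteristic) problem for the conductivity difference and then to run an $L^2$ version of Richter's characteristic estimate. Write $\Sigma:=\sigma^{\alpha,\beta,\delta}-\sigma$ and $w:=u-u^{\alpha,\beta,\delta}$. Subtracting the two first-order equations in \eqref{Liu24-hyperbolic equation} and eliminating $\sigma^{\alpha,\beta,\delta}$ and $u^{\alpha,\beta,\delta}$ in favour of $\sigma,u,\Sigma,w$ (via $\sigma^{\alpha,\beta,\delta}=\sigma+\Sigma$ and $u^{\alpha,\beta,\delta}=u-w$), a short computation regrouping all terms around the \emph{exact} field $\nabla u$ gives
\begin{equation}
\nabla\Sigma\cdot\nabla u+\Sigma\,\Delta u=F,\qquad \Sigma|_{\partial\Omega}=0,
\end{equation}
with source $F:=\nabla\sigma^{\alpha,\beta,\delta}\cdot\nabla w+\sigma^{\alpha,\beta,\delta}\Delta w$. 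The homogeneous boundary datum reflects $\sigma^{\alpha,\beta,\delta}=\sigma=\sigma^{*}$ on $\partial\Omega$, and anchoring the transport to the exact $u$ is precisely what makes the coefficients in $F$ involve $\sigma^{\alpha,\beta,\delta}$, matching the right-hand side of the claim.

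Next I would exploit the standing hypothesis $\inf_{\Omega}\abs{\nabla u}\ge\kappa_1>0$. Since $u\in C^2(\overline\Omega)$ then has no interior critical points, its gradient flow $\dot X=\nabla u(X)/\abs{\nabla u(X)}$ foliates $\overline\Omega$ by characteristics along which $u$ is strictly increasing; each characteristic joins two boundary points, and because $\tfrac{d}{d\ell}u=\abs{\nabla u}\ge\kappa_1$ while the oscillation of $u$ is bounded by $C_b=C_{+}-C_{-}$ through Lemma \ref{lem1}, its arc length is at most $C_b/\kappa_1$. Along such a characteristic the equation becomes the scalar ODE $\tfrac{d}{d\ell}\Sigma+\tfrac{\Delta u}{\abs{\nabla u}}\Sigma=\tfrac{F}{\abs{\nabla u}}$, which I solve by Duhamel's formula with integrating factor $p(\ell)=\exp\!\big(\int_0^\ell \Delta u/\abs{\nabla u}\,d\ell'\big)$. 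Using $\Sigma=0$ at the inflow boundary point yields the representation $\Sigma(x)=\tfrac1{p(\ell_x)}\int_0^{\ell_x}p(\ell')\tfrac{F}{\abs{\nabla u}}\,d\ell'$, hence the pointwise bound $\abs{\Sigma(x)}\le \tfrac{M}{\kappa_1}\int_0^{\ell_x}\abs{F}\,d\ell'$, where $M$ controls $p(\ell')/p(\ell_x)$. The factor $M$ is estimated by a Gronwall argument: from the equation for $u$ one has $\Delta u=-\nabla\ln\sigma\cdot\nabla u$, so $\int\abs{\Delta u}/\abs{\nabla u}\,d\ell'$ is bounded in terms of the oscillation $C_b$, the lower bound $\kappa_1$ and $\sigma_-,\norm{\sigma}_{L^\infty}$, producing $M=\exp\!\big(\norm{\sigma}_{L^\infty}C_b/(\kappa_1\sigma_-)\big)$.

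Finally I would pass from the characteristic estimate to the global $L^2$ bound. Applying Cauchy–Schwarz along each characteristic and using $\ell_x\le C_b/\kappa_1$ gives $\abs{\Sigma(x)}\le \tfrac{M}{\kappa_1}(C_b/\kappa_1)^{1/2}\big(\int_{\gamma_x}\abs{F}^2\,d\ell\big)^{1/2}$; combining $\norm{\Sigma}_{L^2(\Omega)}\le\abs{\Omega}^{1/2}\norm{\Sigma}_{L^\infty(\Omega)}$ with this representation produces precisely the prefactor $D=M\,C_b^{1/2}\kappa_1^{-3/2}\abs{\Omega}^{1/2}$, and a further Cauchy–Schwarz on the two terms of $F=\nabla\sigma^{\alpha,\beta,\delta}\cdot\nabla w+\sigma^{\alpha,\beta,\delta}\Delta w$ replaces the remaining norm of $F$ by $\norm{\nabla\sigma^{\alpha,\beta,\delta}}_{L^2}\norm{\nabla w}_{L^2}+\norm{\sigma^{\alpha,\beta,\delta}}_{L^2}\norm{\Delta w}_{L^2}$, giving the asserted inequality. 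I expect the main obstacle to be exactly this last passage: converting the one-dimensional along-characteristic integrals of $\abs{F}^2$ into the global quantity $\norm{F}_{L^2(\Omega)}$ requires the co-area formula together with uniform two-sided control of the Jacobian of the flow coordinates — which is where $\abs{\nabla u}\ge\kappa_1$ and the $C^2$-regularity of $u$ enter — and this is the step where Richter's $L^\infty$ argument must be genuinely reworked in the $L^2$ setting while keeping the constant in the stated form $D$.
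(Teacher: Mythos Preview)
Your proposal follows essentially the same route as the paper: derive the first-order transport equation $\nabla\Sigma\cdot\nabla u+\Sigma\,\Delta u=F$ with $\Sigma|_{\partial\Omega}=0$ and $F=\nabla\sigma^{\alpha,\beta,\delta}\cdot\nabla w+\sigma^{\alpha,\beta,\delta}\Delta w$, solve along characteristics via Duhamel, bound the integrating factor using $q=\sup_{\Omega}(-\Delta u/|\nabla u|)$ together with the characteristic length bound $l_{\max}\le [u]/\kappa_1\le C_b/\kappa_1$, and then pass to $L^2(\Omega)$ picking up the factor $|\Omega|^{1/2}$.

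The one substantive difference concerns the step you flag as the ``main obstacle.'' The paper does \emph{not} invoke the co-area formula or Jacobian control of the flow coordinates. Instead, after Cauchy--Schwarz along the characteristic, it writes directly
\[
\Bigl(\int_{-l}^{0}F^{2}(x(s))\,ds\Bigr)^{1/2}\le \norm{F}_{L^2(\Omega)},
\]
obtains the pointwise bound $|\Sigma(P)|\le C(u)\,[u]^{1/2}\kappa_1^{-3/2}\norm{F}_{L^2(\Omega)}$, and then applies Minkowski over $P\in\Omega$ (equivalently your $\norm{\Sigma}_{L^2}\le|\Omega|^{1/2}\norm{\Sigma}_{L^\infty}$). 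So the passage you anticipate needing careful reworking is handled in the paper by this single displayed inequality rather than by any change-of-variables machinery; your more cautious plan via co-area would make that step rigorous but is not what the paper actually does. For the bound on the integrating factor, the paper cites Richter (Lemma~6(ii)) to get $q\le\norm{\sigma}_{L^\infty}/\sigma_-$ directly, rather than going through $\Delta u=-\nabla\ln\sigma\cdot\nabla u$ and a Gronwall argument as you propose.
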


\begin{proof}
Firstly, (\ref{Liu24-hyperbolic equation}) leads to
\begin{eqnarray}\label{liu11-01-06}
\begin{cases}
\nabla(\sigma^{\alpha,\beta,\delta}-\sigma)\cdot\nabla u+
(\sigma^{\alpha,\beta,\delta}-\sigma)\Delta u=-\nabla\cdot(\sigma^{\alpha,\beta,\delta}\nabla (u^{\alpha,\beta,\delta}-u)), \quad x\in \Omega,\\
\sigma^{\alpha,\beta,\delta}-\sigma=0, \quad x\in\partial\Omega.
\end{cases}
\end{eqnarray}

Now we take
$$\Sigma:=\sigma^{\alpha,\beta,\delta}-\sigma, \quad w:=u,\quad F:=\nabla\cdot (\sigma^{\alpha,\beta,\delta}\nabla (u^{\alpha,\beta,\delta}-u)).$$
Then the PDE in \eqref{liu11-01-06} becomes
$$\nabla\Sigma\cdot\nabla w+\Sigma \Delta w=-F(x), \quad x\in\Omega.$$
Using the same arguments in \cite{Richter}, it follows that, for an arbitrary point $P\in\Omega$, denote by $\varsigma $ the corresponding characteristic through
$P$ and the boundary point $Q \in \partial \Omega$ at which $\varsigma$ originates, we have
\begin{equation}\label{Liu24-hyperbolic-case1}
\Sigma(P)=\Sigma(Q)\exp\left\{ -\int_{-l}^{0}\frac{\Delta w}{\abs{\nabla w}}ds'\right\}+\int_{-l}^{0}\frac{-F}{\abs{\nabla w}}\exp\left\{ -\int_{s}^{0}\frac{\Delta w}{\abs{\nabla w}}ds'\right\}ds,
\end{equation}
where $l$ is the arclength of characteristic $\varsigma$ between $Q$ and $P$. Then
\begin{align}\label{Liu24-hyperbolic-case1-1}
\abs{\Sigma(P)}
&\le\abs{\Sigma(Q)}e^{ql}+\frac{1}{k_1}\int_{-l}^{0}\abs{F(x(s))}e^{qs}ds\nonumber\\
&\le \abs{\Sigma(Q)}e^{ql}+\frac{1}{\kappa_1}\left (\int_{-l}^{0}F^{2}(x(s))ds \right)^{1/2} \left (\int_{-l}^{0}e^{2qs}ds\right )^{1/2}\nonumber\\
&\le \abs{\Sigma(Q)}e^{ql}+\frac{1}{k_1}\norm{F}_{L^2(\Omega)} \left (\frac{e^{2ql}-1}{2q}\right )^{1/2},
\end{align}
where $q\equiv \sup\limits_{\Omega}\left(\frac{-\Delta w}{\abs{\nabla w}}\right)$.
Noting that
$$e^{ql}\le \max \left\{1,e^{ql_{\text{max}}}\right\},  \left (\frac{e^{ql}-1}{ql}\right )\le \max \left\{1,e^{ql}\right\}, l_{\text{max}}\le \frac{[w]}{\inf_{\Omega}\abs{\nabla w}}, [w]=\sup\limits_{\Omega}w-\inf\limits_{\Omega}w,$$
we have
\begin{equation}\label{Liu24-hyperbolic-case1-2}
\abs{\Sigma(P)}
\le C(w)\left(\abs{\Sigma(Q)}+\frac{l_{\text{max}}^{1/2}}{\kappa_1}\norm{f}_{L^2(\Omega)}\right)
\le C(w)\left(\abs{\Sigma(Q)}+\frac{[w]^{1/2}}{\kappa_1^{3/2}}\norm{F}_{L^2(\Omega)}\right),
\end{equation}
where
$C(w)=\max\left\{1,\exp\left(\frac{q[w]}{k_1}\right)\right\}$.
Applying the Minkowski inequality,
it leads to
\begin{align}\label{Liu24-hyperbolic-case1-3}
\norm{\Sigma}_{L^2(\Omega)}&\le C(w)\left( \norm{\Sigma}_{L^2(\Gamma_1)}+\frac{[w]^{1/2}}{\kappa_1^{3/2}}\norm{F}_{L^2(\Omega)}\abs{\Omega}^{1/2}\right)\nonumber\\&\le
C(w)\left( \norm{\Sigma}_{L^2(\partial\Omega)}+\frac{[w]^{1/2}}{\kappa_1^{3/2}}
\norm{F}_{L^2(\Omega)}\abs{\Omega}^{1/2}\right).
\end{align}
Then (\ref{Liu24-hyperbolic-case1-3}) leads to
\begin{align*}
 \norm{\sigma^{\alpha,\beta,\delta}-\sigma}_{L^2(\Omega)}\le C(u) \frac{[u]^{1/2}}{\kappa_1^{3/2}}\abs{\Omega}^{1/2}&\left[   \norm{\nabla \sigma^{\alpha,\beta,\delta}}_{L^2(\Omega)}\cdot\norm{\nabla(u-u^{\alpha,\beta,\delta})}_{L^2(\Omega)}+\right.\\
&\left.\;\;\norm{\sigma^{\alpha,\beta,\delta}}_{L^2(\Omega)} \cdot \norm{\Delta(u-u^{\alpha,\beta,\delta})}_{L^2(\Omega)}\right],
\end{align*}
using $\Sigma=0$ in $\partial\Omega$ and the expression of $F$.
The desired bound is derived from $q\le \gamma:=\frac{\norm{\sigma}_{L^{\infty}}}{\sigma_{\min}}$ (see Lemma 6(ii) in \cite{Richter}) and the bound on $u[\sigma,h](x)$ by \eqref{liu24-bound_u}.
The proof is complete.
\end{proof}

Combining  \eqref{Liu24-error-estimate-beta}, Lemma \ref{lem2-1} and Lemma \ref{lem2} together, we obtain the following estimate.
\begin{them}
Under the same conditions of Lemma \ref{lem2}, our reconstruction scheme constituted by \eqref{Liu24-regular-scheme-h} and \eqref{liu24-regular-system4-delta} for optimal regularizing $\alpha=O(\delta^{1/(n/2+\lambda_0)})$ and $\beta>0$
has the estimate
\begin{align}
\norm{\sigma^{\alpha,\beta,\delta}-\sigma}_{L^2(\Omega)}&\le
C(\norm{\nabla (u^{\alpha,\beta,\delta}-u)}_{L^2(\Omega)}+\norm{\Delta (u^{\alpha,\beta,\delta}-u)}_{L^2(\Omega)})
\nonumber\\&\le
\norm{\varphi^{\alpha,\beta,\delta}-\varphi}_{L^2(\Omega\times\partial\Omega)}
\nonumber\\&\le
\frac{C(\Omega,U|_{\partial \Omega},\sigma|_{\partial\Omega})}{\sqrt{\beta}}\delta^{\lambda_0/(n/2+\lambda_0)}+
  \norm{\mathcal{R}_{\beta}\mathbb{K}\varphi-\varphi},
\end{align}
where $C=C(\mathbb{A},\Omega,g,h,k_1)$.
\end{them}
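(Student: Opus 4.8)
The plan is to read the assertion as a three-link chain that assembles results already in hand: Lemma~\ref{lem2} (sensitivity of $\sigma$ with respect to $u$), Lemma~\ref{lem2-1} (control of $u^{\alpha,\beta,\delta}-u$ by the density-pair error), and the regularization estimate \eqref{Liu24-error-estimate-beta}. No new analytic device is needed; the proof is bookkeeping of norms and constants, so I would simply substitute each inequality into the next and track how the generic constant $C=C(\mathbb{A},\Omega,g,h,\kappa_1)$ absorbs the intermediate factors.

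For the first link I would start from the bound of Lemma~\ref{lem2},
\[
\norm{\sigma^{\alpha,\beta,\delta}-\sigma}_{L^2(\Omega)}\le D\Big(\norm{\nabla\sigma^{\alpha,\beta,\delta}}_{L^2(\Omega)}\norm{\nabla(u-u^{\alpha,\beta,\delta})}_{L^2(\Omega)}+\norm{\sigma^{\alpha,\beta,\delta}}_{L^2(\Omega)}\norm{\Delta(u-u^{\alpha,\beta,\delta})}_{L^2(\Omega)}\Big),
\]
and turn the two $\sigma^{\alpha,\beta,\delta}$-factors into absolute constants. Since the reconstructions are sought in the admissible class, the pointwise bound $\sigma^{\alpha,\beta,\delta}\le\sigma_+$ gives $\norm{\sigma^{\alpha,\beta,\delta}}_{L^2(\Omega)}\le\sigma_+\abs{\Omega}^{1/2}$, while a uniform $C^1$ (or $H^1$) bound on the reconstructed conductivity controls $\norm{\nabla\sigma^{\alpha,\beta,\delta}}_{L^2(\Omega)}$. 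Absorbing these together with $D=D(\mathbb{A},\Omega,g,\kappa_1)$ into one constant $C$ yields the first displayed inequality of the statement.

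For the second link I would use the elementary inequality $\norm{\nabla v}_{L^2(\Omega)}+\norm{\Delta v}_{L^2(\Omega)}\le C\norm{v}_{H^2(\Omega)}$ with $v=u^{\alpha,\beta,\delta}-u$, and then invoke Lemma~\ref{lem2-1} to replace $\norm{u^{\alpha,\beta,\delta}-u}_{H^2(\Omega)}$ by $C_0\norm{\varphi^{\alpha,\beta,\delta}-\varphi}_{L^2(\Omega\times\partial\Omega)}$; the constant is then folded into the running $C$, which is the sense in which the middle line of the statement drops it. The third link is immediate: substituting \eqref{Liu24-error-estimate-beta} with the optimal choice $\alpha=O(\delta^{1/(n/2+\lambda_0)})$ produces the final right-hand side $\frac{C}{\sqrt\beta}\delta^{\lambda_0/(n/2+\lambda_0)}+\norm{\mathcal{R}_\beta\mathbb{K}\varphi-\varphi}$, completing the chain.

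The main obstacle is not any single inequality but the uniformity of the constants. The factor multiplying $\norm{\nabla\sigma^{\alpha,\beta,\delta}}_{L^2(\Omega)}$ in the first link is harmless only if the reconstructed conductivities stay in a fixed bounded ball of $C^1(\overline\Omega)$; since $\mathbb{A}$ constrains only the pointwise values of $\sigma$, I would either enforce this by restricting the reconstruction ansatz (e.g.\ a finite-dimensional admissible subset with uniformly bounded gradients, as in the expansion $\sigma\approx\sum_{i=1}^M\alpha_i l_i$ used for \eqref{liu24-sigma-system5}) or add it as a standing hypothesis. A second caveat, already flagged in the remark following the density-pair convergence theorem, is that the bias term $\norm{\mathcal{R}_\beta\mathbb{K}\varphi-\varphi}$ carries no a-priori rate, so the resulting estimate is quantitative in the data-noise part but only qualitative in $\beta$.
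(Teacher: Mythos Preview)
Your proposal is correct and follows exactly the paper's approach: the paper's own proof consists of the single sentence ``Combining \eqref{Liu24-error-estimate-beta}, Lemma~\ref{lem2-1} and Lemma~\ref{lem2} together, we obtain the following estimate,'' and you have spelled out precisely this chain (Lemma~\ref{lem2} $\to$ Lemma~\ref{lem2-1} $\to$ \eqref{Liu24-error-estimate-beta}) with the requisite bookkeeping. Your caveat about the uniform control of $\norm{\nabla\sigma^{\alpha,\beta,\delta}}_{L^2(\Omega)}$ is a legitimate observation that the paper itself leaves implicit.
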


\section{Discretization for linear systems}
In the previous section, we propose to recover $\sigma(x)|_{\Omega}$ via solving two linear problems \eqref{liu24-regular-system4} and \eqref{liu24-sigma-system5}. These linear systems both involve weak singular boundary and volume integrals. In this section, we will propose an efficient scheme to  handle these singular integrals numerically.
The discrete version of system \eqref{liu24-system3} will be proposed and then the dual operators in regularizing system \eqref{liu24-regular-system4} can be approximated by dual matrix of the discrete version in \eqref{liu24-system3}.
The details for the treatments of boundary integrals can be found in \cite{Kress}. For volume integrals, we apply the dual reciprocity method (DRM) and the radial integration method (RIM) which transform the volume integrals into boundary integrals, with the attractive feature that the weak singularity can be removed explicitly and the choice of internal nodes can be random, i.e., it is a domain discretization-free method.

We firstly handle $\mathbb{K}_{11}^{\Omega\to\Omega}[\tilde\mu](x)$ in \eqref{liu24-system3} via DRM, which converts the volume integral with fundamental solution as kernel function into a boundary integral by expanding the integrand in terms of some base functions $\{f_{k}:k=1,\cdots, M\}$. More precisely, $\tilde \mu$ is expanded as
\begin{equation}\label{Liu24-mu-expand}
  \tilde \mu(x)\approx \sum\limits_{k=1}^{M}\alpha_k f_k(x)
\end{equation}
where $\{f_{k}:k=1,\cdots,M\}$ are the base functions satisfying
\begin{equation}\label{Liu24-basis-function}
  f_k(x)=\Delta\hat f_k(x)
\end{equation}
for some known $\hat f_k(x)$ corresponding to internal nodes $x_k\in\Omega_{\epsilon}$, and $\alpha_k$ are the expansion coefficients to be determined. Substituting \eqref{Liu24-mu-expand} and \eqref{Liu24-basis-function} into $\mathbb{K}_{11}^{\Omega\to\Omega}[\tilde\mu](x)$ in \eqref{liu24-system3} yields
\begin{align}\label{Liu24-DRM-K11}
\mathbb{K}_{11}^{\Omega \to \Omega}[\tilde\mu](x)
=\int_\Omega \Phi(x,y)\tilde\mu(y)dy
\approx
\sum_{k=1}^{M}\alpha_k D_k(x) ,  \quad x\in\Omega.
\end{align}
with  $D_k(x)=\int_{\Omega}\Phi(x,y)\Delta_y\hat f_k(y)dy $. By integrating by parts, we have
\begin{align}\label{Liu24-DRM}
D_k(x)&=  \int_{\partial \Omega} [\Phi(x,y)\partial_{\nu(y)} \hat f_k(y)
- \hat f_k(y)\partial_{\nu(y)}\Phi(x,y)] ds(y)
+\int_\Omega\hat f_k(y)\Delta_y\Phi(x,y)dy\nonumber\\
&=\int_{\partial \Omega}[\Phi(x,y)\partial_{\nu(y)} \hat f_k(y)
- \hat f_k(y)\partial_{\nu(y)}\Phi(x,y)] ds(y)
-\hat f_k(x), \quad x\in\Omega.
\end{align}
It can be seen that $D_k(x)$ only involves surface integral and the weak singularity in $\mathbb{K}_{11}^{\Omega \to \Omega}[\tilde\mu](x)$ has been integrated explicitly.

However, for volume integral $\mathbb{K}_{21}^{\Omega\to\partial\Omega}[\tilde\mu](x)$ in \eqref{liu24-system3}, since its kernel function is $\partial_{\nu(x)}\Phi(x,y)$, when we apply the same arguments to transform it into surface integrals, the hyper-singular kernel  $\partial_{\nu(y)}\partial_{\nu(x)}\Phi(x,y)$ will appear. In order to overcome this problem, we apply the radial integration method (RIM) to  calculate $\mathbb{K}_{21}^{\Omega\to\partial\Omega}[\tilde\mu](x)$ efficiently.

The essence of RIM to deal with volume integral $\int_{\Omega}k(x,y)dy$ with continuous or weakly singular integrand $k(x,y)$  is to represent
$k(x,y)=\text{div}_y\vec{v}(x,y)$
for some continuously differentiable vector field $\vec{v}(x,y)$.
Then the volume integral  can be transformed into a surface integral $\int_{\partial \Omega}\vec{v}(x,y)\cdot \nu(y)ds(y)$.  For convex domain $\Omega$, it has been shown that (Theorem 3.2 and Theorem 4.1, \cite{RIM_Nintcheu}), $\vec{v}(x,y)$ related to $k(x,y)$ can be constructed as
\begin{equation}\label{Liu24-RIM-v}
  \vec{v}(x,y)=\frac{y-x}{\abs{y-x}^n}\int_{0}^{\abs{y-x}}r^{n-1}k(x,x+r\frac{y-x}{\abs{y-x}})dr,\quad y \in \overline{\Omega} \subset \mathbb{R}^n
\end{equation}
with fixed source point $x\in \mathbb{R}^{n}$. Then we have from divergence theorem that
\begin{equation}\label{Liu24-RIM}
  \int_{\Omega}k(x,y)dy=\int_{\partial \Omega}\frac{(z-x)\cdot \nu(z)}{\abs{z-x}^n}\int_{0}^{\abs{z-x}}r^{n-1}k(x,x+r\frac{z-x}{\abs{z-x}})dr ds(z),\quad x \in \mathbb{R}^n.
\end{equation}
That is, RIM transforms the volume integral into $(n-1)$-dimensional surface integral and one-dimensional radial integral from source point $x$ to boundary point $z$. Usually, the radial integral in \eqref{Liu24-RIM} cannot be integrated explicitly especially for $k(x,y)$ in general form. The Gaussian quadrature may be effectively used to numerical evaluate the radial integral in \eqref{Liu24-RIM}. Then, under the variable transform
$r=\frac{\abs{z-x}}{2}(1+t)$ for $t\in [-1,1]$,
\eqref{Liu24-RIM} can be rewritten as
\begin{equation}\label{Liu24-RIM1}
  \int_{\Omega}k(x,y)dy=\int_{\partial \Omega}\frac{(z-x)\cdot \nu(z)}{2^n}\int_{-1}^{1}(1+t)^{n-1}k(x,x+\frac{1+t}{2}(z-x))dt\;ds(z), \; x \in \mathbb{R}^n.
\end{equation}

Using \eqref{Liu24-RIM1} for
$$k(x,y):=2\partial_{\nu(x)}\Phi(x,y)\tilde \mu(y)=\frac{2(y-x)\cdot\nu(x)\tilde\mu(y)}{2^{n-1}\pi\abs{y-x}^n},$$
we then have
\begin{align}
    &\mathbb{K}_{21}^{\Omega \to \partial\Omega}[\tilde\mu](x)\nonumber\\
    =&2\int_{\partial \Omega}\frac{(z-x)\cdot \nu(z)}{2^{n}}\int_{-1}^{1}(1+t)^{n-1}\left(\frac{\frac{1+t}{2}(z-x)\cdot\nu(x)}{2^{n-1}\pi\abs{\frac{1+t}{2}(z-x)}^n}\right)\tilde\mu(x+\frac{1+t}{2}(z-x))dt ds(z)\nonumber\\
    =&\int_{\partial \Omega}\frac{(z-x)\cdot \nu(z)\times(z-x)\cdot \nu(x)}{2^{n-1}\pi\abs{z-x}^{n}}\int_{-1}^{1}\tilde\mu(x+\frac{1+t}{2}(z-x))dt ds(z), \qquad x\in\partial\Omega. \label{Liu24-K211}
\end{align}
Define $RK(x,z):=\frac{(z-x)\cdot \nu(z)\times(z-x)\cdot \nu(x)}{2^{n-1}\pi\abs{z-x}^{n}}$ and apply the Gaussian quadrature to radial integral in \eqref{Liu24-K211} with respect to $t$, we have
\begin{equation}\label{Liu24-K211G}
  \mathbb{K}_{21}^{\Omega \to \partial\Omega}[\tilde\mu](x)\approx
  \int_{\partial \Omega}RK(x,z)\sum\limits_{l=1}^{N_G}w_l\tilde\mu(x+\frac{1+t_l}{2}(z-x)) ds(z), \quad x\in\partial\Omega,
\end{equation}
where $w_l$ and $t_l$ for $l=1,\cdots,N_G$ are $N_G$-order Gauss quadrature weights and points, respectively.

For $\partial \Omega\in C^2$, by Lemma 6.16 in \cite{Kress}, there exists a constant $L>0$ such that $\abs{(z-x)\cdot \nu(x)}\le L \abs{z-x}^2$
for all $x,z\in\partial \Omega$. So it follows that
\begin{align*}
    \abs{RK(x,z)}
    \le\frac{\abs{(z-x)\cdot \nu(z)} \times \abs{(z-x)\cdot \nu(x)}}{2^{n-1}\pi\abs{z-x}^{n}}
    \le \frac{L^2}{2^{n-1}\pi}\abs{z-x}^{4-n},\quad x,z \in\partial \Omega.
\end{align*}
That is, $RK(x,z)$ is essentially a continuous function  with the expression
\begin{align}\label{Liu24-RK211}
    RK(x,z)=
    \begin{cases}
\frac{(z-x)\cdot \nu(z)\times(z-x)\cdot \nu(x)}{2^{n-1}\pi\abs{z-x}^{n}}, &\quad x\neq z,\\
0,&\quad x=z.
\end{cases}
\end{align}
In this way, the volume integral $\mathbb{K}_{21,1}^{\Omega \to \partial \Omega}[\tilde\mu](x)$ is converted into surface integral, and the weak singularity have been removed completely by RIM. Expanding $\tilde \mu$ via \eqref{Liu24-mu-expand}, we finally have
\begin{equation}\label{Liu24-K211G-Mu}
  \mathbb{K}_{21}^{\Omega \to \partial\Omega}[\tilde\mu](x)\approx
  \sum\limits_{k=1}^{M}\alpha_k\int_{\partial \Omega}RK(x,z)\sum\limits_{l=1}^{N_G}w_l f_k(x+\frac{1+t_l}{2}(z-x)) ds(z), \quad x\in\partial\Omega.
\end{equation}
Since all the domain integrals in \eqref{liu24-system3} have been transformed into surface integrals via DRM and RIM, the numerical recovery of $\sigma(x)$ can be generated efficiently, noticing that the linear system for the density functions involves only the values on the boundary grids. Such a scheme will decrease the computational cost, especially in 3-dimensional cases.

Here a discrete scheme in the case $\Omega\subset\mathbb{R}^2$ is given only, where the Nystr$\ddot{\text{o}}$m scheme is applied to construct the quadrature rules for boundary integrals. As for $\Omega\subset \mathbb{R}^3$,  the scheme stated in \cite{Colton1} (Chapter 3.7) builds an efficient scheme for computing the integrals with smooth integrals and can be used  to handle the singularities.

Assume that the boundary curve $\partial \Omega$ has a $2\pi$-periodic parametric representation
$$\partial\Omega:=\{x(t)=(x_{1}(t),x_{2}(t)),\; t\in [0,2\pi]\}.$$

Introduce the quadrature points $t_j=\pi j/\tilde n,j=0,1,\cdots,2\tilde n-1$ for boundary $\partial\Omega$, we firstly handle surface integral
$\mathbb{K}_{22}^{\partial\Omega\to\partial\Omega}[\tilde\psi](x)$ with weak-singular kernel in linear system \eqref{liu24-system3}, which
has the parametric representation
$$
\mathbb{K}_{22}^{\partial\Omega\to\partial\Omega}[\tilde\psi](x(t))=\frac{1}{2\pi}
\int_{0}^{2\pi}2\partial_{\nu(x)}\Phi(x(t),x(\tau))\tilde \psi( x(\tau))|x'(\tau)|d\tau, \qquad t\in[0,2\pi].
$$
Defining a smooth kernel function $\mathbf{K}_{22}(t,\tau)$ as
\begin{eqnarray}\label{liu24-K22,1}
\mathbf{K}_{22}(t,\tau)=\frac{-1}{2\tilde n}\partial_{\nu(x(t))}\left(\ln\abs{x(t)-x(\tau)} \right)|x'(\tau)|=
\begin{cases}
\frac{1}{2\tilde n}\frac{x''(t)\cdot\nu(x(t))}{2|x'(t))|}, &t=\tau,\\
\frac{(x(\tau)-x(t))\cdot \nu(x(t))}{2\tilde n |x(t)-x(\tau)|^2}|x'(\tau)|,&\hbox{elsewhere}.
\end{cases}
\end{eqnarray}
Then it follows in terms of Nystr$\ddot{\text{o}}$m quadrature rule that
\begin{equation}\label{Liu24-discrete-K22,1}
  \mathbb{K}_{22}^{\partial\Omega\to\partial\Omega}[\tilde\psi](x(t))=2\int_{\partial\Omega}
\partial_{\nu(x(t))}\Phi(x(t),y)\tilde\psi(y)dy\approx2
\sum\limits_{j=1}^{2\tilde n}\mathbf{K}_{22}(t,t_j)\tilde\psi(x(t_j)).
\end{equation}

Now we consider the discrete version of domain integral.
Since $D_k(x)$ given by \eqref{Liu24-DRM} has smooth kernel function, the discrete version of $\mathbb{K}_{11}^{\Omega\to\Omega}[\tilde\mu](x)$ for $x\in \Omega$ in terms of Nystr$\ddot{\text{o}}$m is
\begin{align}\label{liu24-k11-discrete}
\mathbb{K}_{11}^{\Omega\to\Omega}[\tilde\mu](x)
&\approx \sum_{k=1}^{M}\alpha_k \left [ \sum_{j=1}^{2\tilde n} \left(  \mathbf{G}_{j}(x)\hat q_{jk}-\mathbf{H}_{j}(x)\hat f_{jk}\right)-\hat f_{k}(x)\right ], \quad x\in \Omega,
\end{align}
where $\hat f_{jk}=\hat f_k(y_j)$,
 $\hat q_{j k}= \left.\partial_{\nu(y)} \hat f_k(y)\right | _{y=y_j} $ with $y_j=x(t_j)$ and
\begin{align*}
     \mathbf{G}_{j}(x)=\frac{1}{2\tilde n}\ln\frac{1}{|x-y_j|}|x'(t_j)|, \qquad
     \mathbf{H}_{j}(x)=\frac{1}{2\tilde n}\frac{(x-y_j)\cdot \nu(y_j)}{|x-y_j|^2}|x'(t_j)|.
\end{align*}
For $\mathbb{K}_{21}^{\Omega \to \partial\Omega}[\tilde\mu](x)$ in \eqref{Liu24-K211G-Mu}, since it only involves smooth kernel,  it can be approximated by
\begin{equation}\label{Liu24-discrete-K21,1}
  \mathbb{K}_{21}^{\Omega \to \partial\Omega}[\tilde\mu](x(t))\approx \sum\limits_{k=1}^{M}\alpha_k \sum\limits_{j=1}^{2\tilde n}\mathbf{RK}^k_j(t)
\end{equation}
in parametric representation with $$\mathbf{RK}^k_j(t):=\frac{1}{2\tilde n}RK(x(t),y_j)\sum\limits_{l=1}^{N_G}w_l f_k(x(t)+\frac{1+t_l}{2}(y_j-x(t)))|x'(t_j)|.$$

Finally, we yield the following approximate version of \eqref{liu24-system3} from \eqref{Liu24-discrete-K22,1}-\eqref{Liu24-discrete-K21,1}:
\begin{align}\label{Liu-24-discrete}
    \begin{cases}
    \sum\limits_{k=1}^{M}\alpha_k \mathbf{DK}(x;k)+\sum\limits_{j=1}^{2\tilde n} \mathbf{G}_j(x)\tilde \psi_j=U(x),\\
    \sum\limits_{k=1}^{M}\alpha_k \sum\limits_{j=1}^{2\tilde n}\mathbf{RK}^k_j(t)+\tilde \psi (x(t))+2\sum\limits_{j=1}^{2\tilde n}\mathbf{K}_{22}(t;t_j)\tilde \psi_j =2\tilde g(x(t))-2\tilde h(x(t))U(x(t))
    \end{cases}
\end{align}
for the unknowns $\alpha_k (k=1,\cdots,M)$ and $\tilde \psi_j:=\tilde \psi(x(t_j)) (j=1,\cdots,2\tilde n)$, where
$$\mathbf{DK}(x;k)=\sum\limits_{j=1}^{2\tilde n} \left(  \mathbf{G}_{j}(x)\hat q_{jk}- \mathbf{H}_{j}(x)\hat f_{jk}\right)-\hat f_{k}(x)$$

By specifying $(x,t)$ in \eqref{Liu-24-discrete} at the collocation points $(x
_k,t_j)\in \Omega_{\epsilon}\times [0,2\pi]$ with $k=1,\cdots,M$ and $j=1,\cdots,2\tilde n$, we can finally solve $\alpha_k$ for $k=1,\cdots,M$ and $\tilde \psi_j$ for $j=1,\cdots,2\tilde n$. Then the density function $\tilde \mu(x)$ can be approximated by \eqref{Liu24-mu-expand}.

The special basis function $f_k(x)$ is needed to expand density function $\tilde \mu(x)$. Since $\tilde f_k(x)$ is required to meet $\Delta\tilde f_k(x)=f_k(x)$, it is convenient to choose $f_k(x)$ in some special form. For specified internal grid $x_k\in\Omega_{\epsilon}\subset \mathbb{R}^n$, as recommended in \cite{DRM_Brunton}, a typical form is
\begin{equation}\label{liu11-01-02}
f_k(x)=1+r_k(x):=1+|x-x_k|, \quad k=1,\cdots,M,
\end{equation}
where $r_k(x)$ is the distance between the field point $x$ and the internal grid $x_k$.

For radial basis function $f_k(x)$ in this form,
the function $\hat f_k(x)$ to  \eqref{Liu24-basis-function} can be chosen as
\begin{eqnarray}
\hat f_k(x)=
\begin{cases}
\frac{r_k^2(x)}{4}+\frac{r_k^3(x)}{9}, &n=2,\\
\frac{r_k^2(x)}{6}+\frac{r_k^3(x)}{12}, &n=3.
\end{cases}
\end{eqnarray}

\section{Numerical experiments }

We test the performances of our scheme for recovering  $(h,\sigma)$ in \eqref{liu11-01} by two examples in $\Omega \subset \mathbb{R}^2, \mathbb{R}^3$, showing the validity of our proposed schemes.

Assume that $\sigma|_{\partial\Omega}$ is known.  The noisy measurement data  are simulated in the way
$$
U^{\delta}(x)=u(x)+\delta\times \text{rand}(x), \quad x\in\Omega_{\epsilon},
$$
where $\delta>0$ is noise level and $\text{rand}(x)\in (-1,1)$ is random number obeying uniform distribution.

{\bf Example 1. A 2-dimensional model.} Consider the case that $\Omega$ is a heart-shaped domain, with boundary curve in polar coordinates represented as
\begin{equation*}
\partial\Omega:\equiv\{x(t)=(0.5\cos t,0.5\sin t-0.4\sin^2 t+0.4),\; t\in [0,2\pi]\}+(1,0.8).
\end{equation*}
We take the conductivity in $\overline{\Omega}$ as
\begin{equation}\label{Liu24-sigma2D-expression}
   \sigma(x,y)=(1+x+y)^2,
\end{equation}
as shown in Figure \ref{Liu-24-exact-sigma2D}. The exact solution to forward problem \eqref{liu11-01} is taken as
\begin{equation}\label{Liu24-u-expression}
  u(x,y)=-\frac{x^2-y^2}{1+x+y}-2.
\end{equation}
Meanwhile the boundary impedance coefficient
\begin{equation}\label{Liu24-h2D-expression}
 h(x,y)=2x+y^2+2,\quad (x,y)\in\partial\Omega.
\end{equation}

\begin{figure}[htbp]
\centering
\begin{minipage}[t]{0.45\linewidth}
\centering
\includegraphics[scale=0.35]{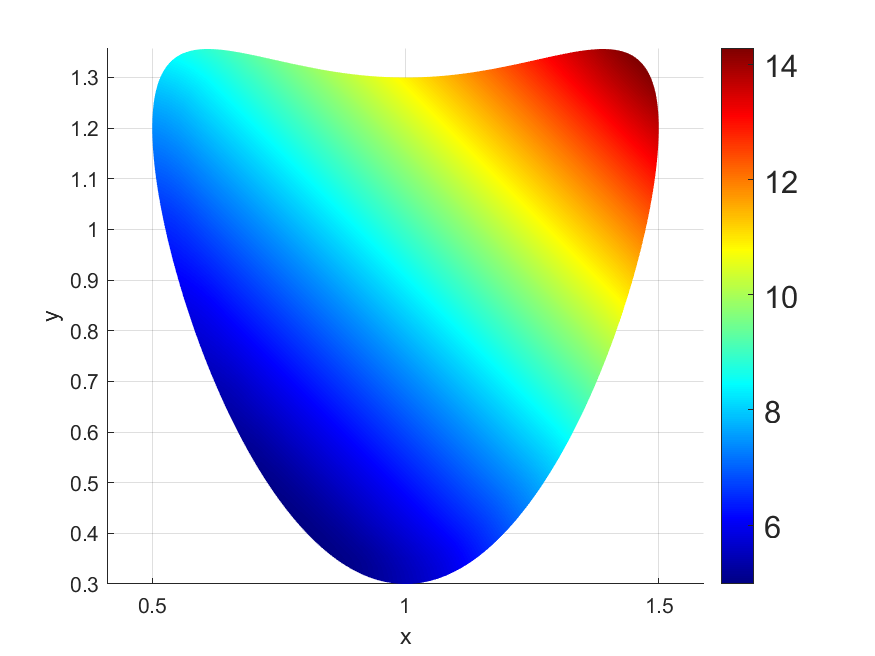}
\caption{The exact conductivity $\sigma(x,y)$.}
\label{Liu-24-exact-sigma2D}
\end{minipage}
\begin{minipage}[t]{0.45\linewidth}
\centering
\includegraphics[scale=0.35]{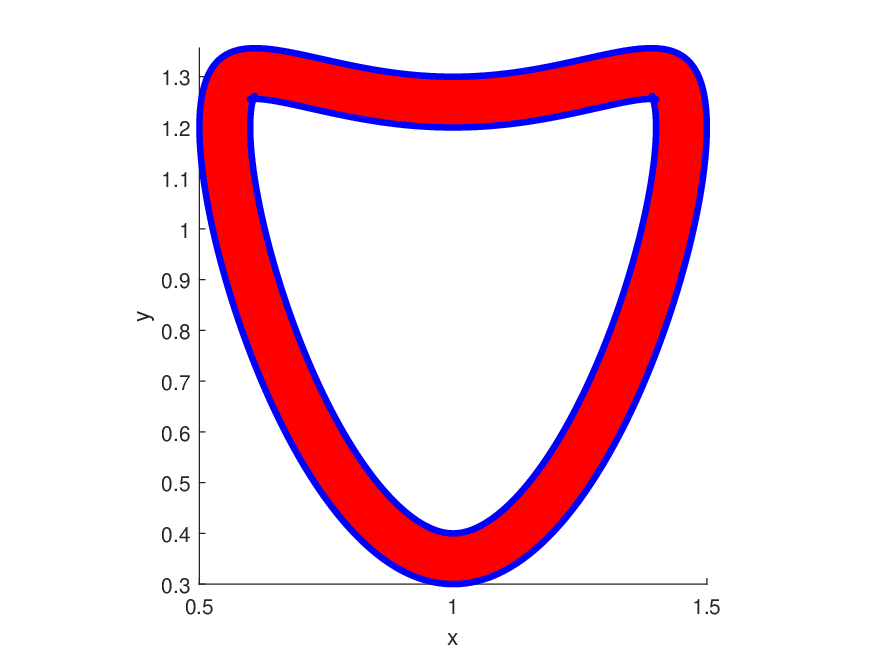}\\
\caption{The measure domain $\Omega_{1/10}.$}
\label{Liu-24-measure-domain2D}
\end{minipage}
\end{figure}

\begin{figure}[htbp]
  \centering
  \includegraphics[scale=0.6]{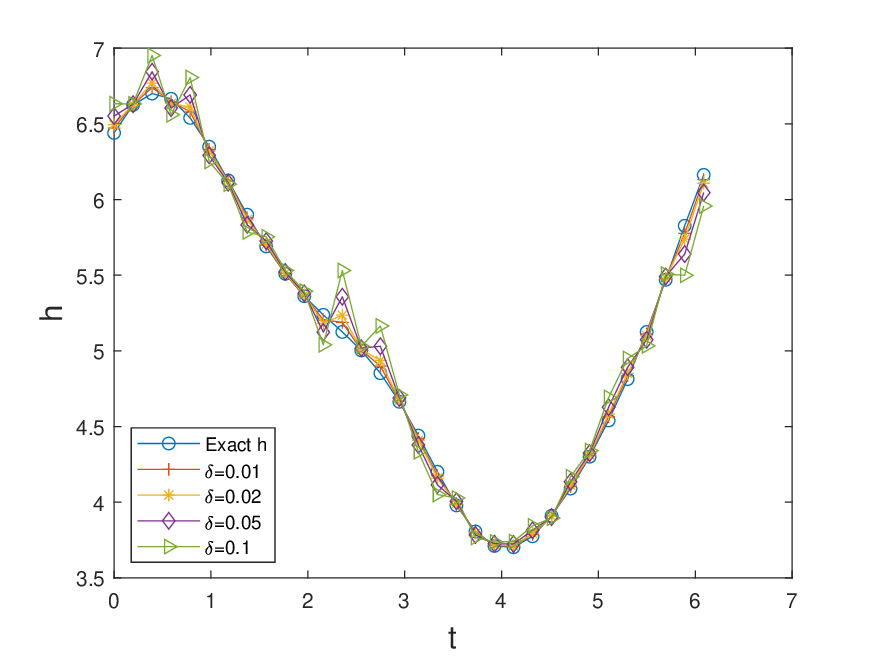}\\
  \caption{Exact $h$ and its reconstructions for different noise levels}
  \label{Liu-24-Recon-h2D}
\end{figure}

It can be verified that such a configuration for \eqref{liu11-01} meets $g(x,y)<0$ and $|\nabla u[\sigma,h,g]|_{\Omega}>0$.

We specify the internal measurable domain as $\Omega_{1/10}\equiv \{x\in\Omega,\text{dist}(x,\partial \Omega)\leq 1/10\}$ for this model, as shown in red colour in Figure \ref{Liu-24-measure-domain2D}.


We firstly recover  $h(x,y)$ by our proposed scheme. To calculate $h$ numerically, $\partial\Omega$ is divided into $512$-subintervals by discrete points $x(t_i)$ with $t_i:=i\frac{2\pi}{N_e}, i=0,\cdots,N_e-1$ for $N_e=512$. Since $u(x,y)$ is smooth enough, we take $\lambda_0=\frac{1}{2}$ in our choice strategy $\alpha(\delta)=O(\delta^{1/(1+\lambda_0)})$. Numerically we fix $\alpha(\delta)=0.6\times\delta^{1/1.5}$. In Figure \ref{Liu-24-Recon-h2D}, we show the reconstructions of $h$ given in \eqref{Liu24-h2D-expression} for different noise levels $\delta=0.01,0.02,0.05,0.1$, respectively at
$32$-boundary nodes. It can be seen that the recovered results are satisfactory. It is also noticed that the reconstructions are contaminated quickly as the noise level $\delta$ increases, especially for $\delta=0.1$.

Next we recover $\sigma$ given in \eqref{Liu24-sigma2D-expression} using noisy inversion input data $U^{\delta}(x,y)|_{\Omega_{1/10}}$ and the recovered $h^{\delta}(x,y)$ with $\delta=0.01,0.02,0.05,0.1$. We take $256$ internal nodes in measurement domain $\Omega_{1/10}$, where  $U^{\delta}(x,y)$ are simulated to recover $\sigma(x,y)$. The regularizing parameter $\beta>0$ is chosen in the strategy
$$\beta(\delta)\to 0,\quad \frac{\delta^{\lambda_0/(1+\lambda_0)}}{\sqrt{\beta}}\to 0\quad \hbox{as} \quad \delta\to 0.$$
Here we take $\beta(\delta)=0.03\times \delta^{\lambda_0/(1+\lambda_0)}$ for $\lambda_0=\frac{1}{2}$, together with the recovered Robin coefficient $h(x,y)$. It should be noticed that the number of boundary nodes (512) is much larger than that of internal nodes (256). The reason for such a setting is that all the volume integrals in \eqref{liu24-system3} have been transformed into boundary integrals via DRM and RIM schemes, which require sufficient fine boundary grids  to ensure the numerical accuracy of boundary integrals.

\begin{figure}[htbp]
\centering
\subfigure{
\begin{minipage}[t]{0.23\linewidth}
\centering
\includegraphics[scale=0.25]{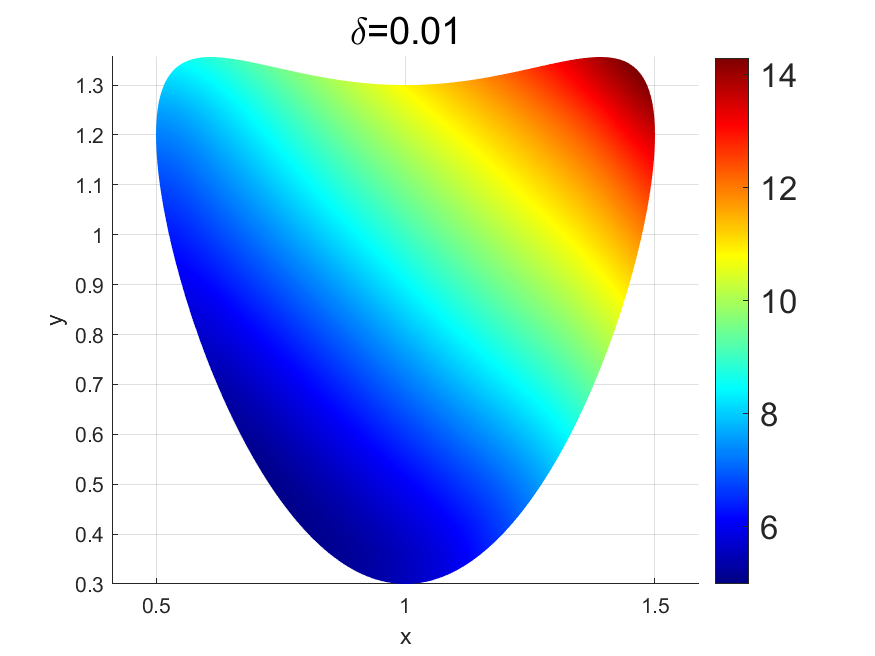}
\end{minipage}
}
\subfigure{
\begin{minipage}[t]{0.24\linewidth}
\centering
\includegraphics[scale=0.25]{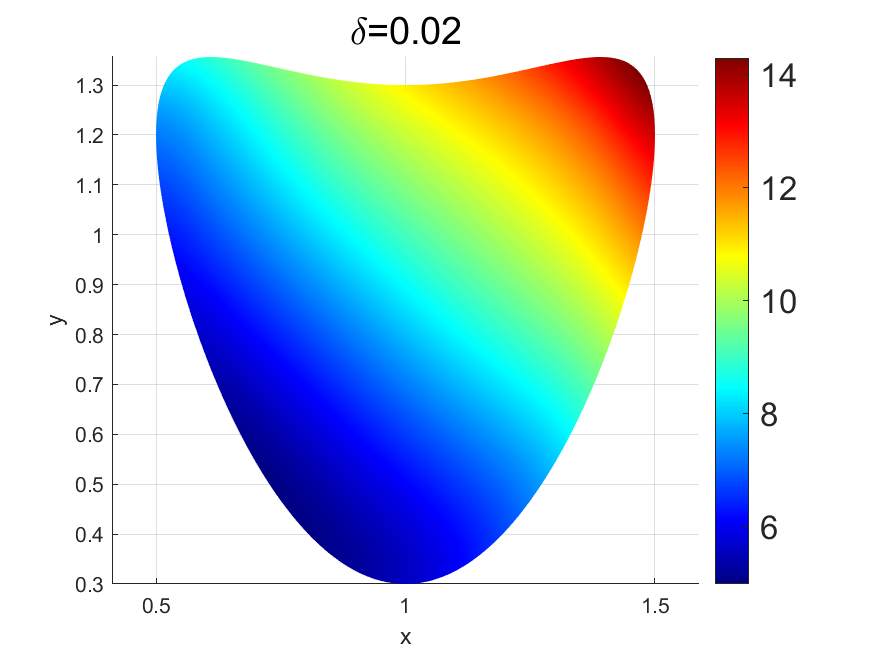}
\end{minipage}%
}%
\subfigure{
\begin{minipage}[t]{0.24\linewidth}
\centering
\includegraphics[scale=0.25]{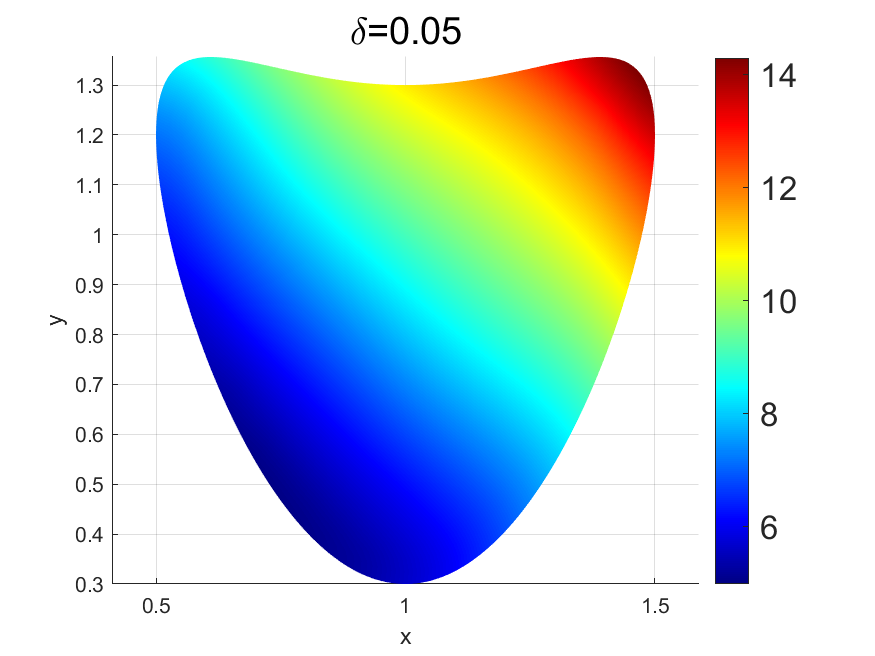}
\end{minipage}%
}%
\subfigure{
\begin{minipage}[t]{0.24\linewidth}
\centering
\includegraphics[scale=0.25]{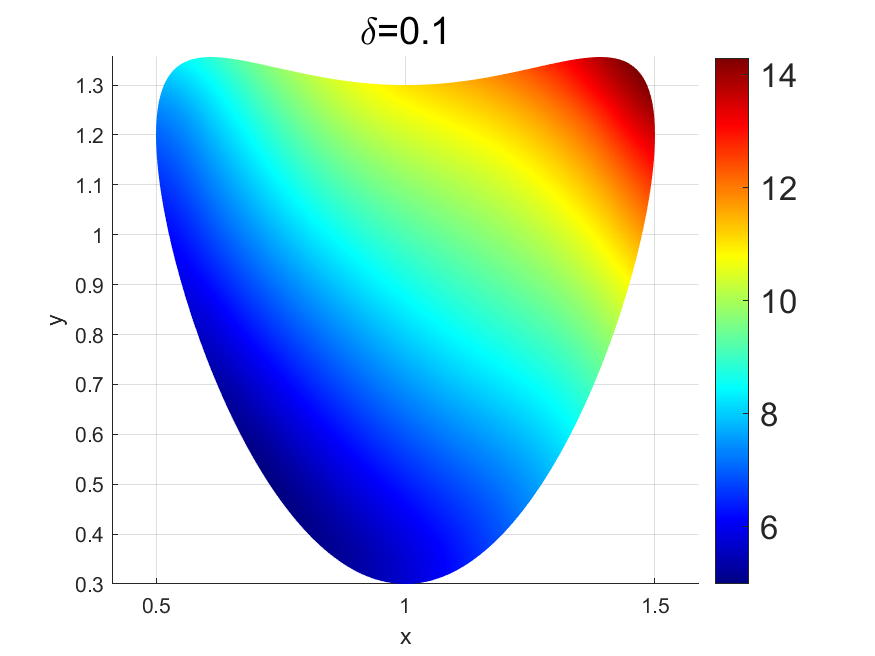}
\end{minipage}%
}%
\\
\subfigure{
\begin{minipage}[t]{0.245\linewidth}
\centering
\includegraphics[scale=0.25]{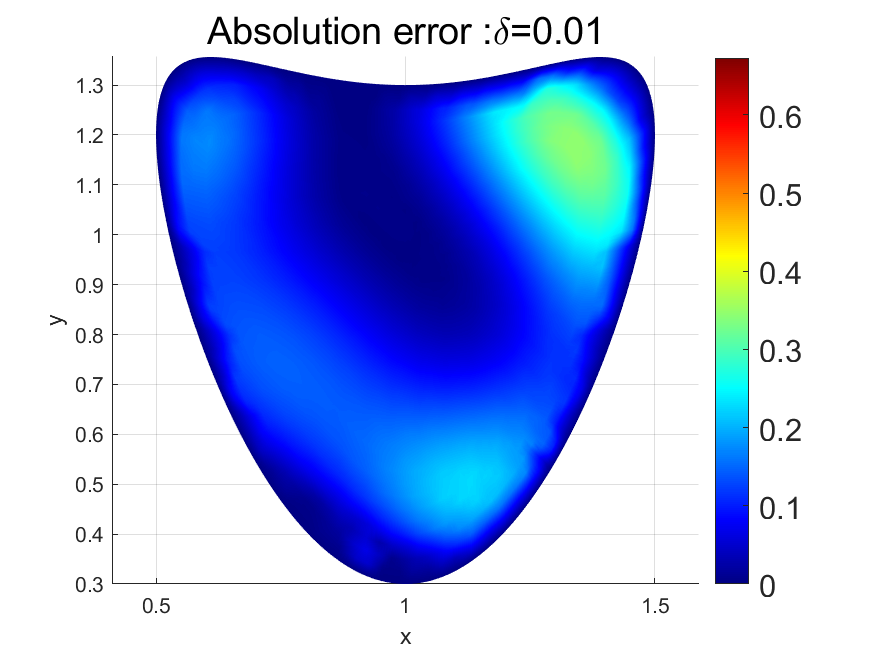}
\end{minipage}%
}%
\subfigure{
\begin{minipage}[t]{0.24\linewidth}
\centering
\includegraphics[scale=0.25]{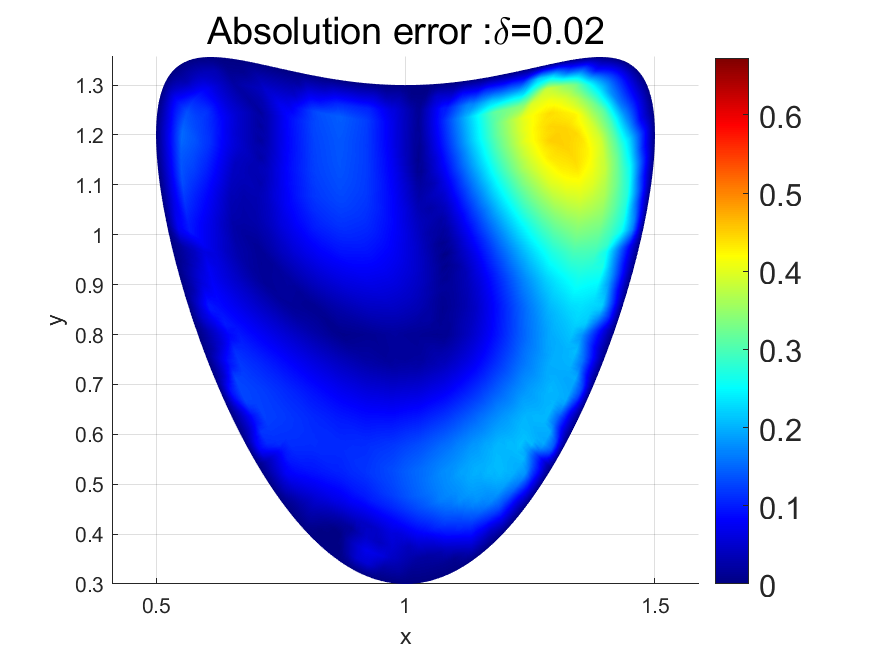}
\end{minipage}%
}%
\subfigure{
\begin{minipage}[t]{0.24\linewidth}
\centering
\includegraphics[scale=0.25]{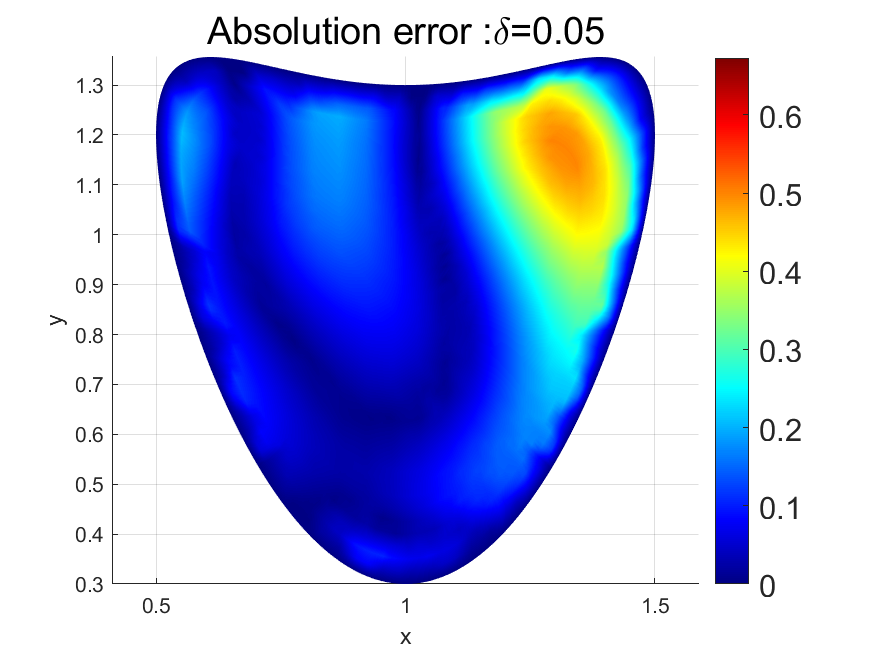}
\end{minipage}%
}%
\subfigure{
\begin{minipage}[t]{0.24\linewidth}
\centering
\includegraphics[scale=0.25]{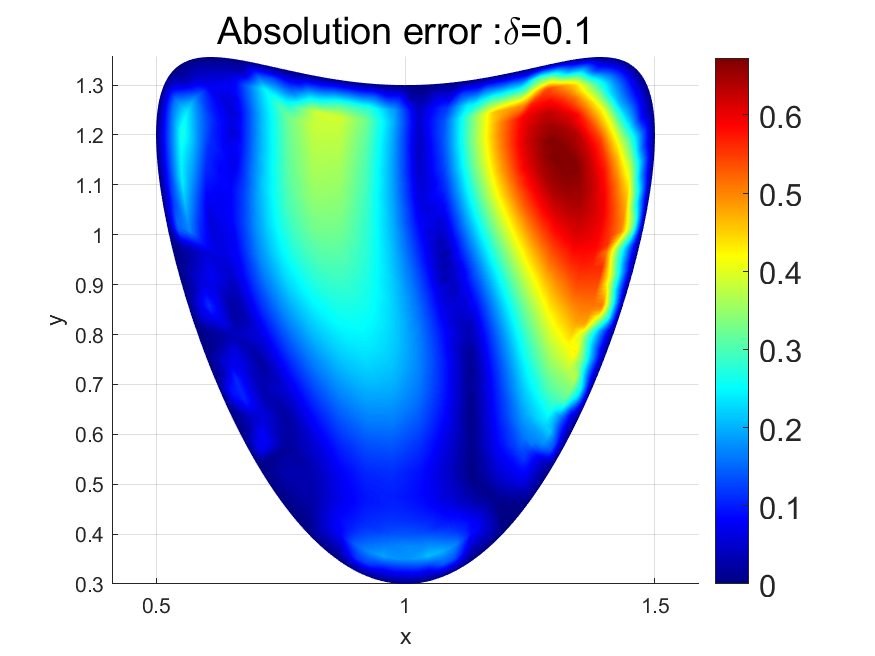}
\end{minipage}%
}%
\centering
\caption{The reconstructions of $\sigma
$ together with error distributions for different noise levels.}
\label{Liu-24-recover-sigma2D}
\end{figure}

We present the reconstruction performances of  $\sigma(x,y)$ in Figure \ref{Liu-24-recover-sigma2D} by our scheme for different noise levels. The top row illustrates the  reconstructions for different noise levels, while the bottom row shows the corresponding point-wise absolute error distributions. From left to right, it gives numerical results for noise levels $\delta=0.01,0.02,0.05,0.1$, respectively. Notice, the colour-bars for two rows are different, i.e., the same colour in top and bottom rows represents different values. Compared with exact $\sigma$ shown in Figure \ref{Liu-24-exact-sigma2D}, we see that the reconstructions are satisfactory and also contaminated  for large noise levels. For relative small $\delta=0.01,0.02$, the reconstructions are slightly different from the exact one, with maximum absolute errors less than $0.5$. For large noise levels $\delta=0.05,0.1$, the recovered results are deteriorated obviously, with the maximum absolute error larger than $0.5$. However, the internal structure of $\sigma(x,y)$ are still able to be identified even for the large noise level $\delta=0.1$.

To quantitatively show the approximation behavior for the reconstruction process by noise levels, we introduce the following  $L^2$-norm relative errors
\begin{align*}
\norm{\frac{h^{\delta}-h}{h}}_{L^2(\partial\Omega)}\approx \text{RE}_{\partial\Omega}(h,\delta):=\sqrt{\frac{2\pi}{N_e}\sum\limits_{i=1}^{N_e}\left|\frac{h_i^{\delta}-h_i}{h_i}\right|^2\abs{x'(t_i)}},\\
\norm{\frac{\sigma^{\delta}-\sigma}{\sigma}}_{L^2(\Omega)}\approx \text{RE}_{\Omega}(\sigma,\delta):=\sqrt{\sum\limits_{l=1}^{N_t}S_l\frac{1}{3}\sum\limits_{i=1}^{3}\left|\frac{\sigma_{l(i)}^{\delta}-\sigma_{l(i)}}{\sigma_{l(i)}}\right|^2},
\end{align*}
where $S_l$ is the area of triangle $\Delta_l$ with vertices $l(1),l(2),l(3)$ satisfying $\Omega\approx\bigcup _{l=1}^{N_t}\Delta_l$.

In Table \ref{Liu-24-table-2D}, we show the reconstruction errors, which cannot be observed from Figure \ref{Liu-24-Recon-h2D} and Figure \ref{Liu-24-recover-sigma2D}. It can be seen that both $\text{RE}_{\partial\Omega}(h,\delta)$ and $\text{RE}_{\Omega}(\sigma,\delta)$ increase as $\delta$ becomes larger. Since the error in recovering $h(x)$ will propagate to the recovering process for $\sigma(x)$, it can be noticed that the recovered errors of $\sigma(x,y)$  are all larger than those for recovering $h(x,y)$.
\begin{table}[H]
\centering
\caption{Error comparisons of reconstructions with respect to the noisy level $\delta$. }
\vspace{5pt}
\begin{tabular}{c|cccc}
\hline
Noise level $\delta$ & 0.01 & 0.02 & 0.05 & 0.1 \\
\hline
$\text{RE}_{\partial\Omega}(h,\delta)$ & 1.115167e-03 & 1.947534e-03  & 4.064032e-03 & 7.083656e-03\\

$\text{RE}_{\Omega}(\sigma,\delta)$ & 1.378185e-02 & 1.405683e-02 & 1.480780e-02 & 2.352229e-02 \\

\hline
\end{tabular}
\label{Liu-24-table-2D}
\centering
\end{table}


{\bf Example 2. A 3-dimensional model.}   Let $\Omega\subset \mathbb{R}^3$ be a pinched ball with  boundary
$$
\partial \Omega=\left \{q(\theta,\varphi): q(\theta,\varphi)=r(\theta,\varphi)x(\theta,\varphi), \theta \in [0,\pi], \varphi \in [0,2\pi]\right\},
$$
where $$r(\theta,\varphi)=0.5\sqrt{(1.5+0.5\cos2\varphi(\cos2\theta-1)},\;
x(\theta,\varphi)
=(\sin\theta\cos\varphi, \sin\theta\sin\varphi, \cos\theta)\in \mathbb{S}^2,$$ see Figure \ref{liu-24-Ex3_surface} for the geometric shape.

\begin{figure}[htbp]
\centering
\includegraphics[scale=0.43]{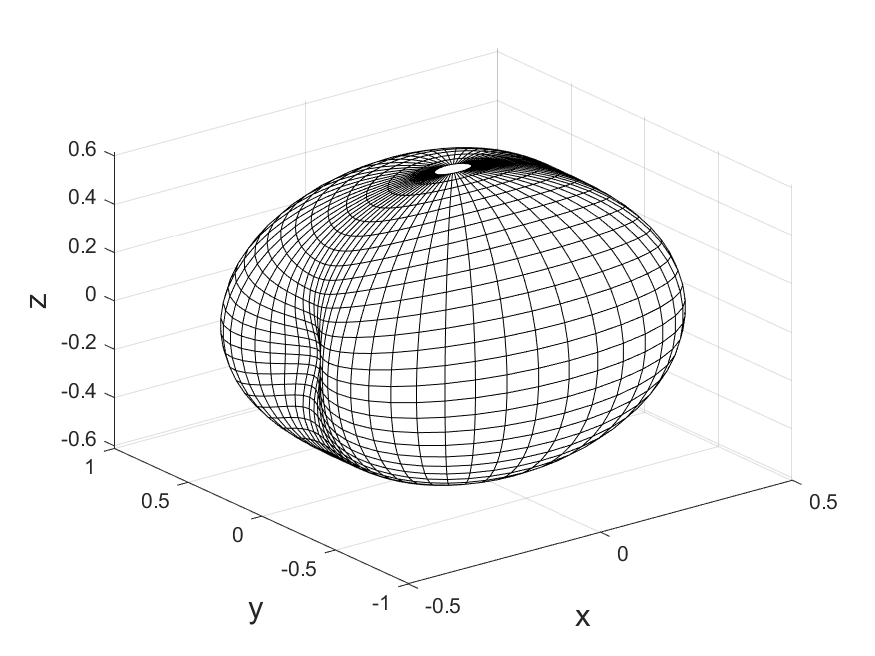}
\centering
\caption{Geometric configuration for the pinched ball $\Omega$.}
\label{liu-24-Ex3_surface}
\end{figure}

We give the conductivity distribution
\begin{align}\label{Liu-24-sigma3D-expression}
\sigma(x,y,z)=(2+0.5x+0.5y+z)^2
\end{align}
in $\Omega$ and the exact solution to forward problem
\begin{align}\label{Liu-24-u3D-expression}
u(x,y,z)=\frac{e^{x}\sin(y+\frac{\pi}{3})}{2+0.5x+0.5y+z}-5,
\end{align}
while the exact boundary impedance coefficient is
\begin{align}\label{Liu-24-h3D-expression}
h(x,y,z)=x^2+2y+z+4,
\end{align}
see Figure \ref{Liu-24-exact-h3D}. Such a configuration satisfies \eqref{liu11-01} and $g<0$.

\begin{figure}[htbp]
\centering
\begin{minipage}[t]{0.45\linewidth}
\centering
\includegraphics[scale=0.4]{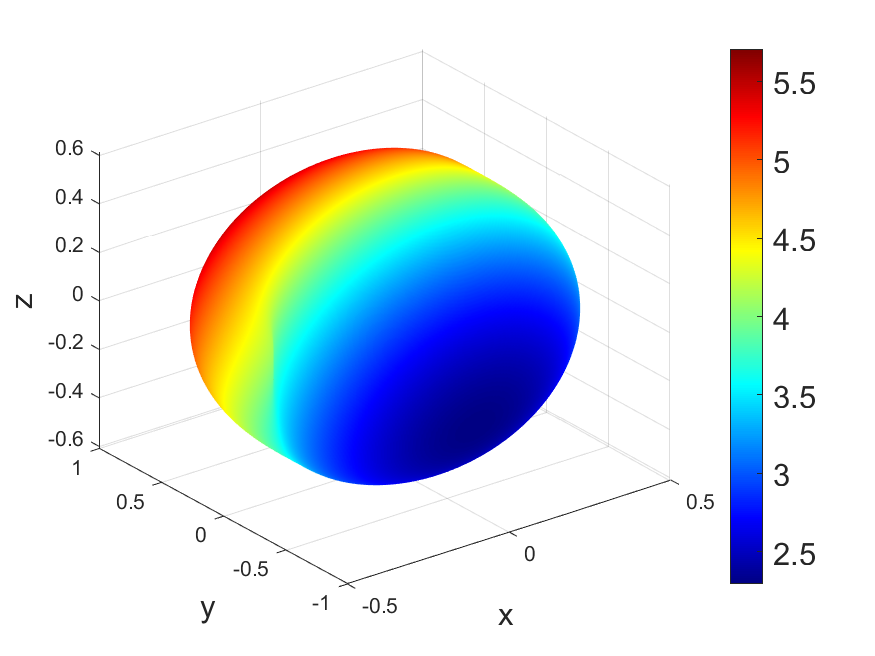}
\caption{The exact Robin coefficient $h$.}
\label{Liu-24-exact-h3D}
\end{minipage}
\begin{minipage}[t]{0.45\linewidth}
\centering
\includegraphics[scale=0.4]{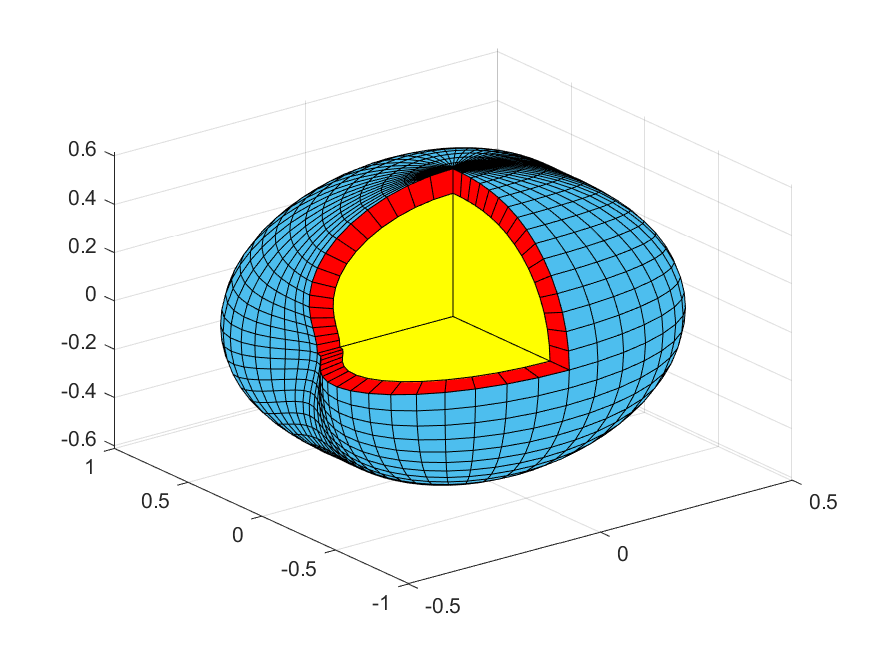}\\
\caption{The measurement domain $\Omega_{1/12}.$}
\label{Liu-24-measure-domain}
\end{minipage}
\end{figure}

\begin{figure}[htbp]
\centering
\includegraphics[scale=0.6]{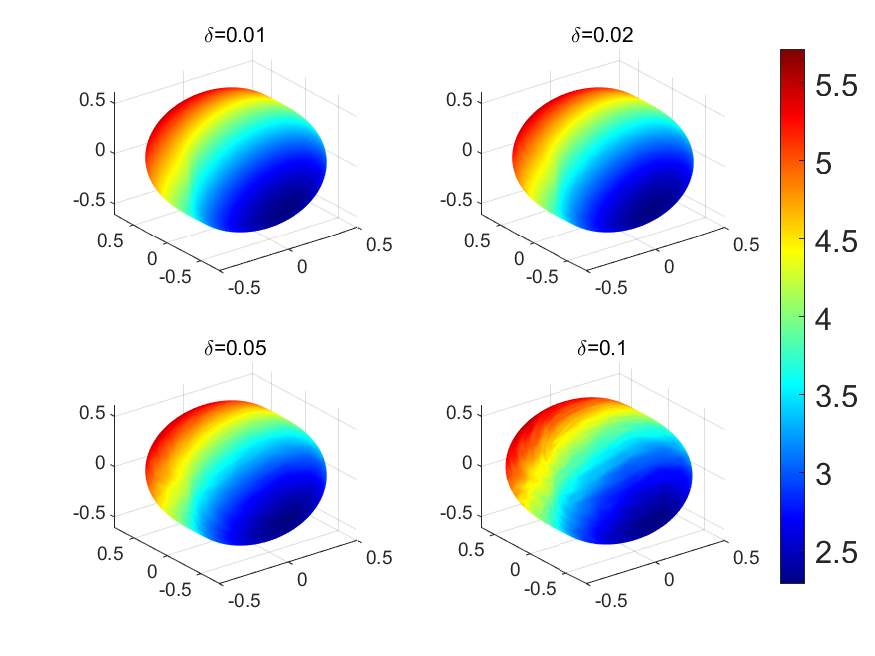}
\centering
\caption{The reconstructions of $h(x,y,z)$ with different noise levels.}
\label{Liu-24-Recon-h}
\end{figure}

We also assume that $\sigma|_{\partial\Omega}$  is known.  The measurement domain $\Omega_\epsilon$ is fixed for $\epsilon=1/12$, see the red part in Figure \ref{Liu-24-measure-domain}, while the blue and yellow parts are the surface and non-measurement domain, respectively. The noisy data are generated by
$$
U^{\delta}(x)=u(x)+\delta\times \text{rand}(x), \qquad x\in\Omega_{1/12},
$$
with $\delta>0$ and $\text{rand}(x)\in (-1,1)$  the random number obeying uniform distribution.

In our numerical scheme, the quadrature nodes on the unit sphere $\mathbb{S}^2$ are chosen in term of the polar coordinates \cite{Colton1}:
$$
x_{jk}:\equiv(\sin\theta_j\cos\varphi_k,\sin\theta_j\sin\varphi_k,\cos\theta_j),\quad j=1,\cdots,N,\quad k=0,\cdots,2N-1,
$$
where $\theta_j=\arccos t_j,\varphi_k=\pi k/N$ for $N$-order Gauss quadrature points $t_j\in [-1,1]$. Then   $2N^2$ surface nodes $q_{jk}:\equiv r(\theta_j,\varphi_k) x_{jk}\in \partial \Omega$ can be obtained to discrete $\partial\Omega$.

We firstly recover $h(x,y,z)$ based on the regularizing scheme \eqref{Liu24-regular-scheme-h} with $N=32$.  In Figure \ref{Liu-24-Recon-h}, we show the reconstructions using noisy data with  $\delta=0.01,0.02,0.05,0.1$ with $\alpha(\delta)=0.3\times\delta^{1/(1.5+\lambda_0)}$ with $\lambda_0=\frac{1}{2}$, respectively. Compared with the exact result in Figure \ref{Liu-24-exact-h3D}, it can be found clearly that the reconstructions are satisfactory and relatively stable for different noise levels. For relatively small noise levels $\delta=0.01,0.02$, the reconstructions are almost consistent with the exact result. When the noise levels are large such as $\delta=0.05,0.1$, the reconstructions are contaminated obviously, especially for $\delta=0.1$. However, the distribution of $h$ on $\partial \Omega$ can still be clearly distinguished, even for large noise level $\delta=0.1$.

Now, we continue to recover $\sigma(x,y,z)$  from  $U^{\delta}(x,y,z)$ with noisy level $\delta=0.01,0.02,0.05,0.1$ by our parameter choice strategy $\beta(\delta)=0.2\times \delta^{\lambda_0/(1.5+\lambda_0)}$ with $\lambda_0=\frac{1}{2}$. We apply the recovered $h(x,y,z)$ in the above step and $U^{\delta}(x,y,z)$ evaluated at 840 discrete points  in $\Omega_{1/12}$. Figure \ref{liu-24-exact-sigma3D} shows the exact conductivity $\sigma(x,y,z)$ given in \eqref{Liu-24-sigma3D-expression}, while  the conductivity in $\partial\Omega$ is presented in Figure \ref{liu-24-exact-sigma3D}(a) , which is assumed to be known in our reconstruction process.  The conductivity inside $\Omega$ is shown in Figure \ref{liu-24-exact-sigma3D}(b) for which we aim to recover.

\begin{figure}[htbp]
\centering
\subfigure[The conductivity in $\partial\Omega$.]
{
\begin{minipage}[t]{0.45\linewidth}
\centering
\includegraphics[scale=0.32]{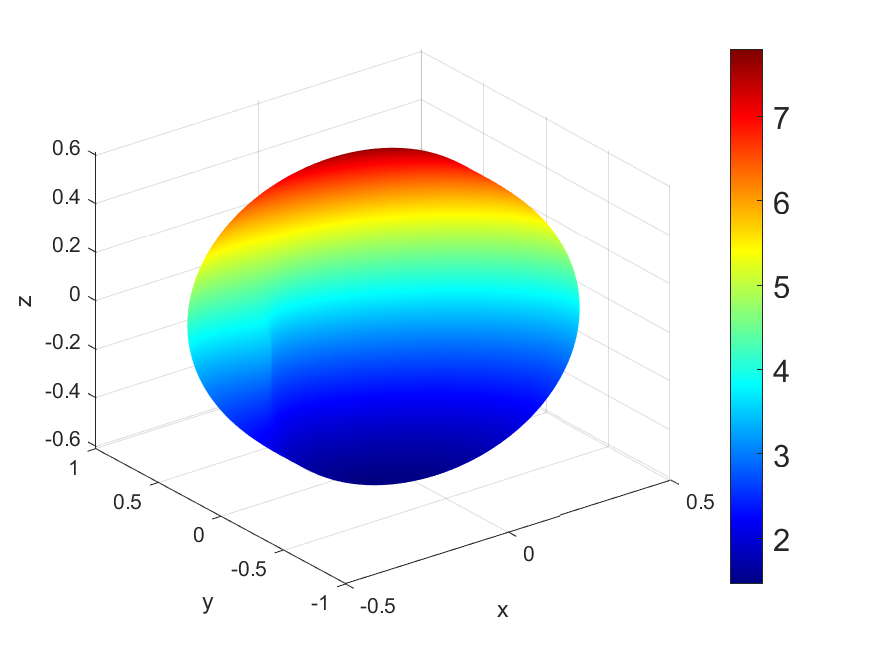}
\end{minipage}
}
\subfigure[The conductivity in $\Omega$.]
{
\begin{minipage}[t]{0.45\linewidth}
\centering
\includegraphics[scale=0.32]{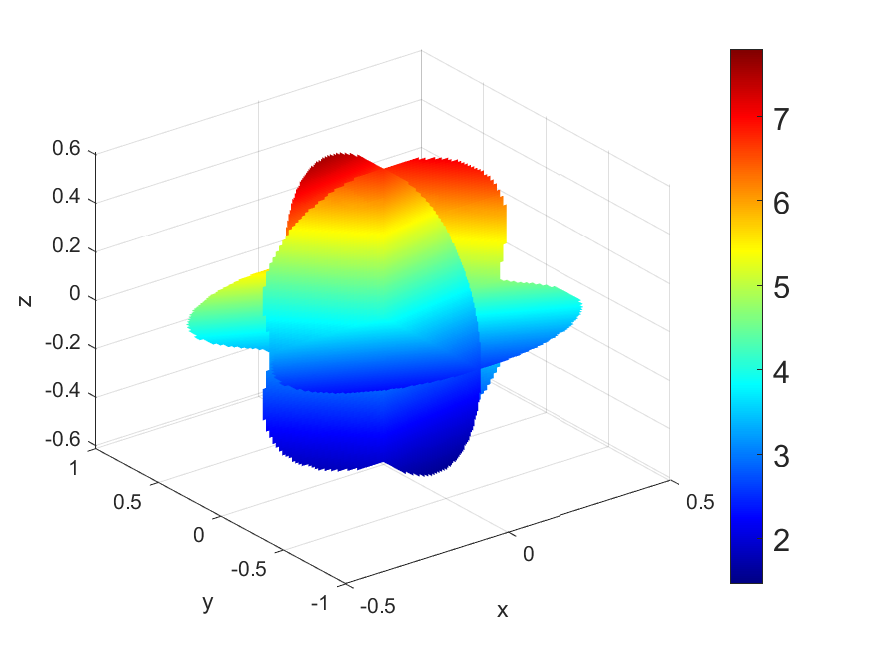}
\end{minipage}%
}%
\centering
\caption{The exact conductivity $\sigma(x,y,z)$ in $\overline\Omega$.}
\label{liu-24-exact-sigma3D}
\end{figure}

In Figure \ref{Liu-24-recover-sigma3}, we show the numerics of recovering $\sigma(x,y,z)$ in $\Omega$ for different noise levels, respectively. From first to last column, it gives the reconstructions for noise levels $\delta=0.01,0.02,0.05,0.1$, respectively.
The top row illustrates the numerical reconstructions  at three slice planes orthogonal to $x,y,z$ axis, while the bottom row shows the point-wise absolute error distributions corresponding to top row. From these figures, we can see that the reconstructions are satisfactory and also deteriorated for large noise levels. For small noise levels $\delta=0.01,0.02$, the maximum absolute error of small noise levels are both less than $0.5$ and the reconstructions are only slightly different from the exact one as shown in  Figure \ref{liu-24-exact-sigma3D}(b). However, for large noise levels $\delta=0.05,0.1$, the recovered results are contaminated obviously and the maximum absolute error of large noise levels are both greater than $0.5$. But the structure information of  $\sigma(x,y,z)$ in different parts of $\Omega$ can still be distinguished.

\begin{figure}[htbp]
\centering
\subfigure{
\begin{minipage}[t]{0.23\linewidth}
\centering
\includegraphics[scale=0.25]{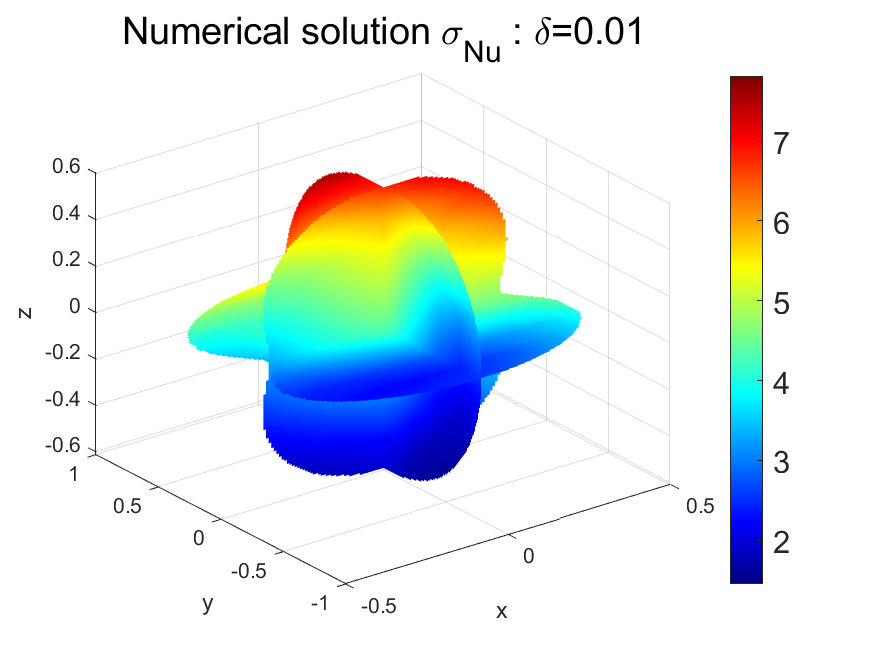}
\end{minipage}
}
\subfigure{
\begin{minipage}[t]{0.24\linewidth}
\centering
\includegraphics[scale=0.25]{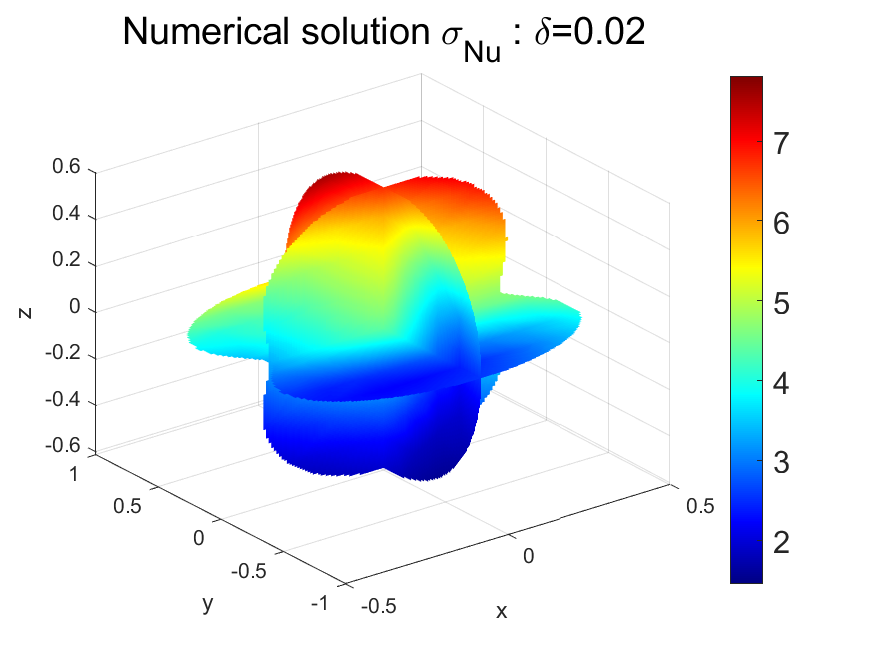}
\end{minipage}%
}%
\subfigure{
\begin{minipage}[t]{0.24\linewidth}
\centering
\includegraphics[scale=0.25]{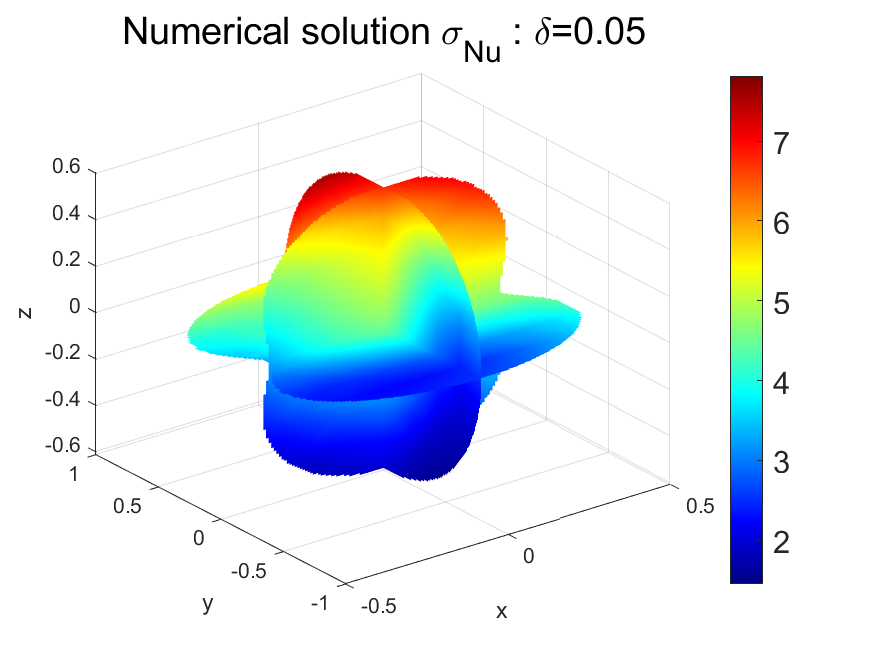}
\end{minipage}%
}%
\subfigure{
\begin{minipage}[t]{0.24\linewidth}
\centering
\includegraphics[scale=0.25]{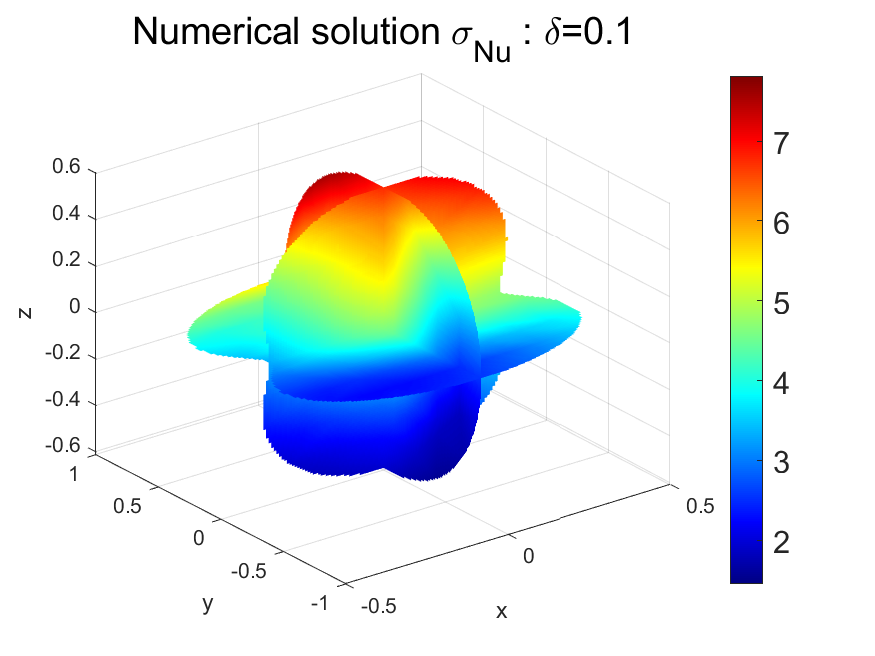}
\end{minipage}%
}%
\\
\subfigure{
\begin{minipage}[t]{0.245\linewidth}
\centering
\includegraphics[scale=0.25]{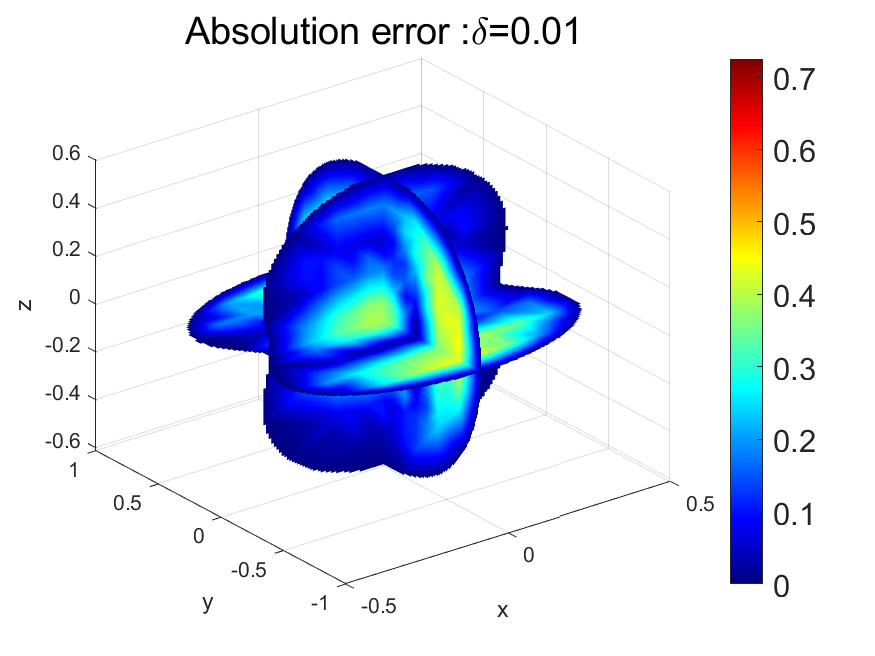}
\end{minipage}%
}%
\subfigure{
\begin{minipage}[t]{0.24\linewidth}
\centering
\includegraphics[scale=0.25]{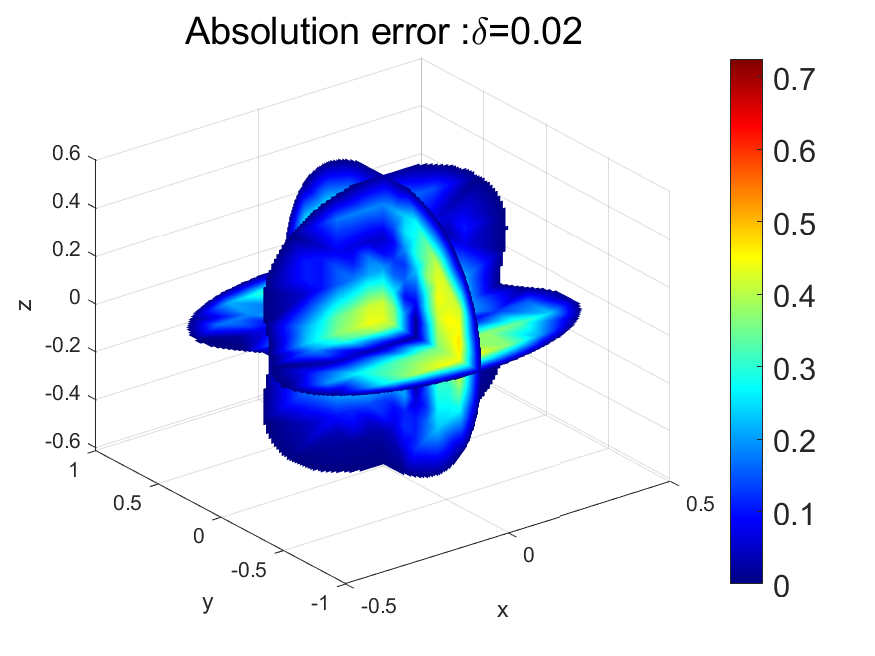}
\end{minipage}%
}%
\subfigure{
\begin{minipage}[t]{0.24\linewidth}
\centering
\includegraphics[scale=0.25]{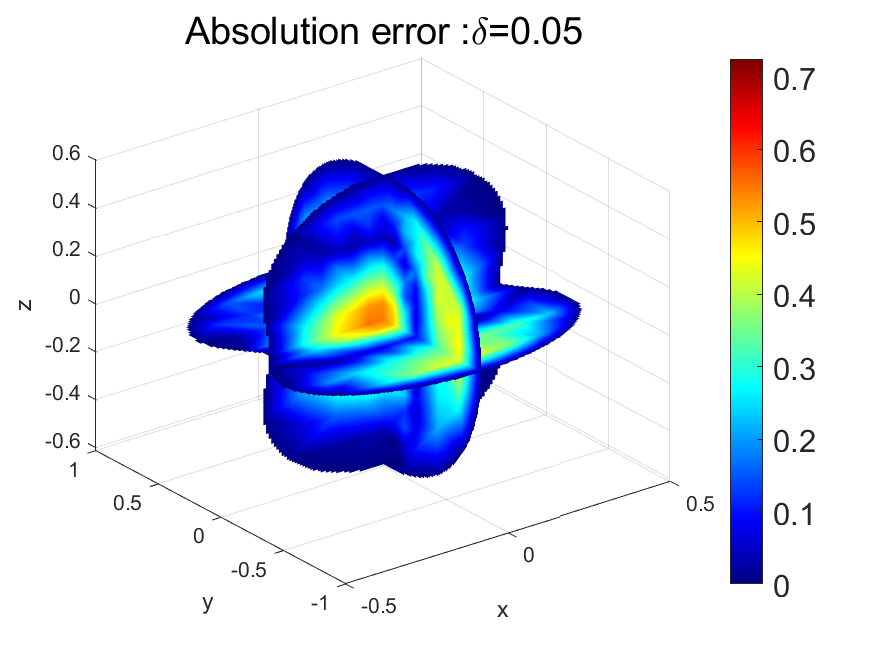}
\end{minipage}%
}%
\subfigure{
\begin{minipage}[t]{0.24\linewidth}
\centering
\includegraphics[scale=0.25]{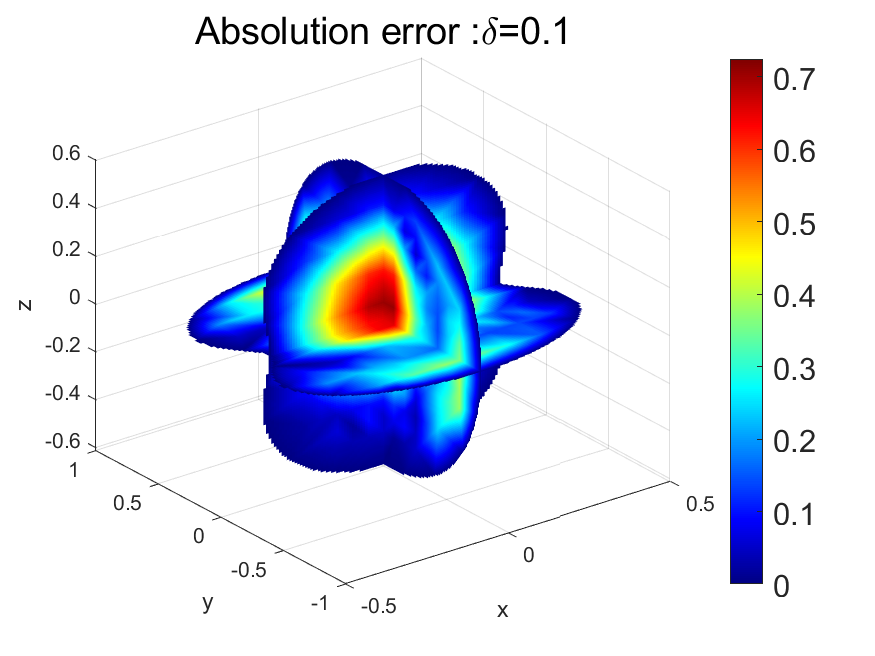}
\end{minipage}%
}%
\centering
\caption{Reconstructions of $\sigma(x,y,z)$ with different noise levels:  recovered ones (top row), absolute error distributions (bottom row).}
\label{Liu-24-recover-sigma3}
\end{figure}

To present numerical performances more clearly, which are not easy to be observed in the 3-dimensional Figure \ref{Liu-24-recover-sigma3}, we give the reconstruction result together with the exact one  in  Figure \ref{Liu-24-recover-sigma3_0} for noise level $\delta=0.05$ in three different planes. The exact conductivity and recovered one are illustrated in top row  and  bottom one in Figure \ref{Liu-24-recover-sigma3_0}. From left to right, we show the results in three different planes $x=0$, $y=0$ and $z=0$, respectively. From Figure \ref{Liu-24-recover-sigma3_0}, we can observe that the numerical performance is satisfactory. It can also be observed that the recovering results  are much deteriorated in central part of $\Omega$, which is reasonable since our inversion input data are specified in boundary domain $\Omega_{1/12}$.
\begin{figure}[htbp]
\centering
\subfigure
{
\begin{minipage}[t]{0.32\linewidth}
\centering
\includegraphics[scale=0.3]{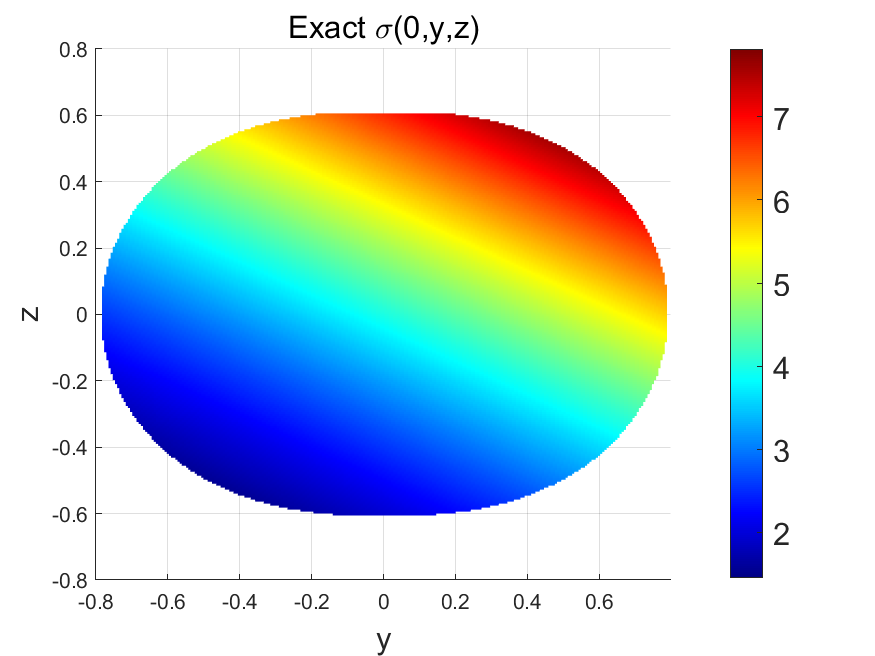}
\end{minipage}%
}%
\subfigure
{
\begin{minipage}[t]{0.3\linewidth}
\centering
\includegraphics[scale=0.3]{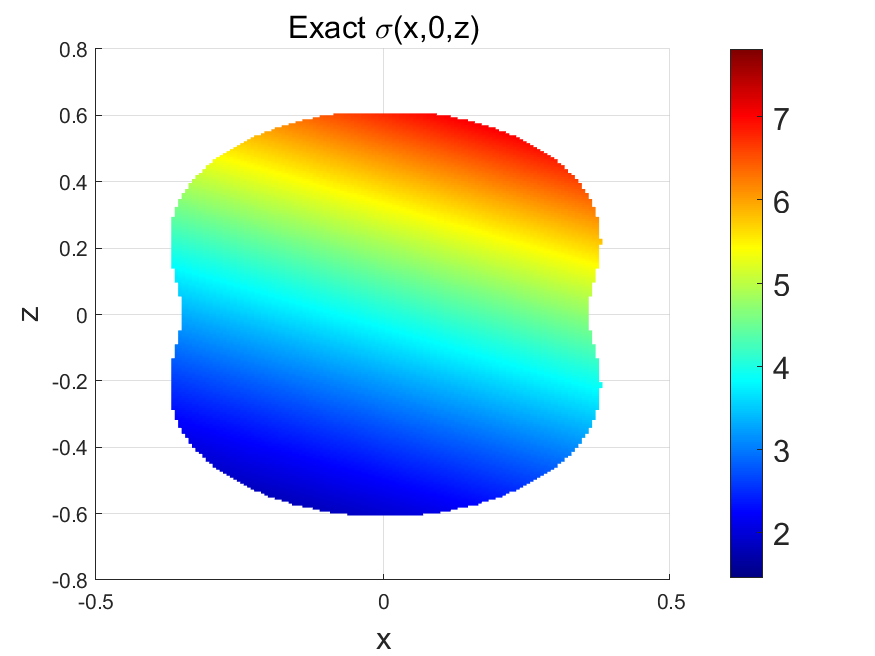}
\end{minipage}
}
\subfigure
{
\begin{minipage}[t]{0.33\linewidth}
\centering
\includegraphics[scale=0.3]{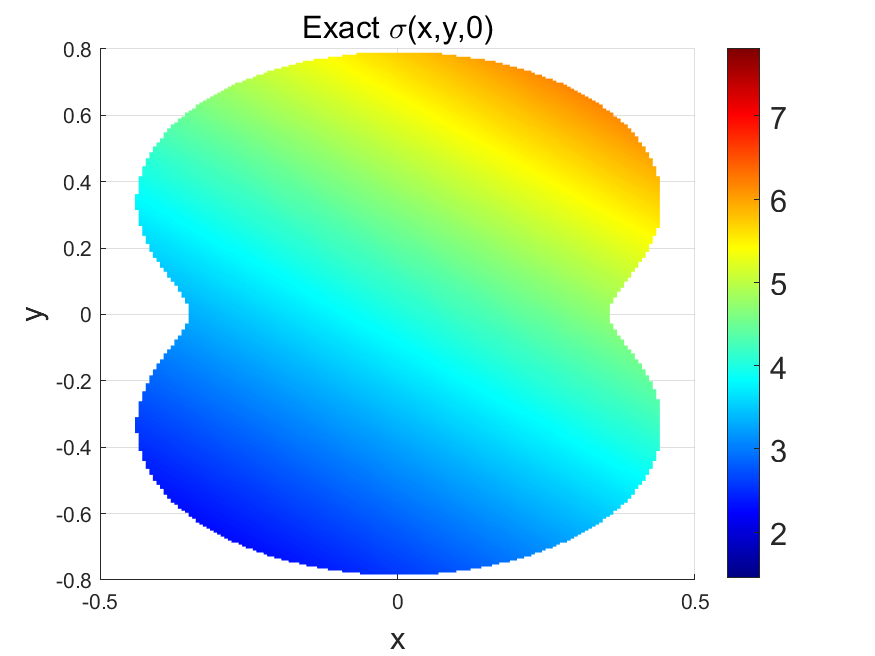}
\end{minipage}
}
\subfigure
{
\begin{minipage}[t]{0.32\linewidth}
\centering
\includegraphics[scale=0.3]{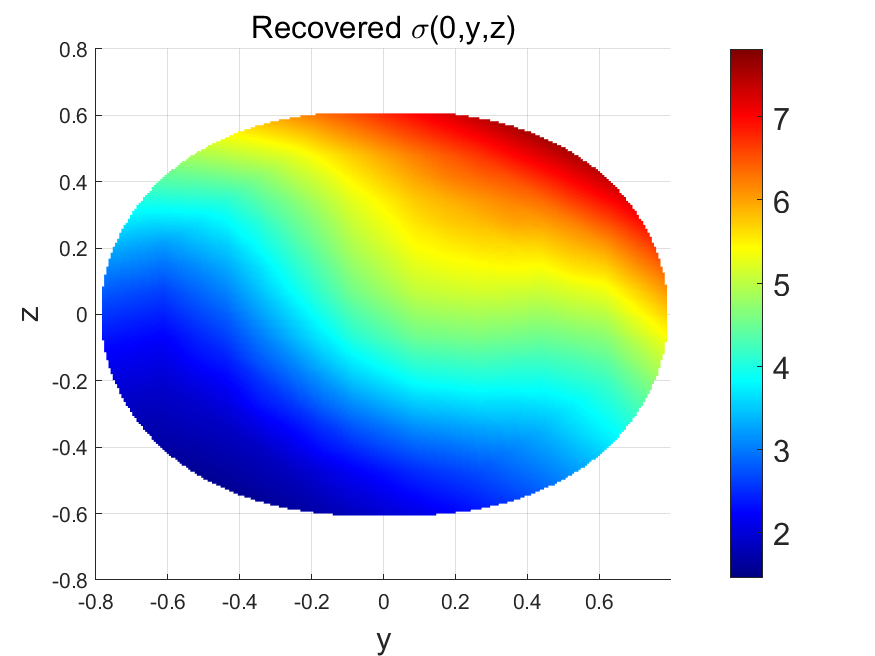}
\end{minipage}%
}%
\subfigure
{
\begin{minipage}[t]{0.3\linewidth}
\centering
\includegraphics[scale=0.3]{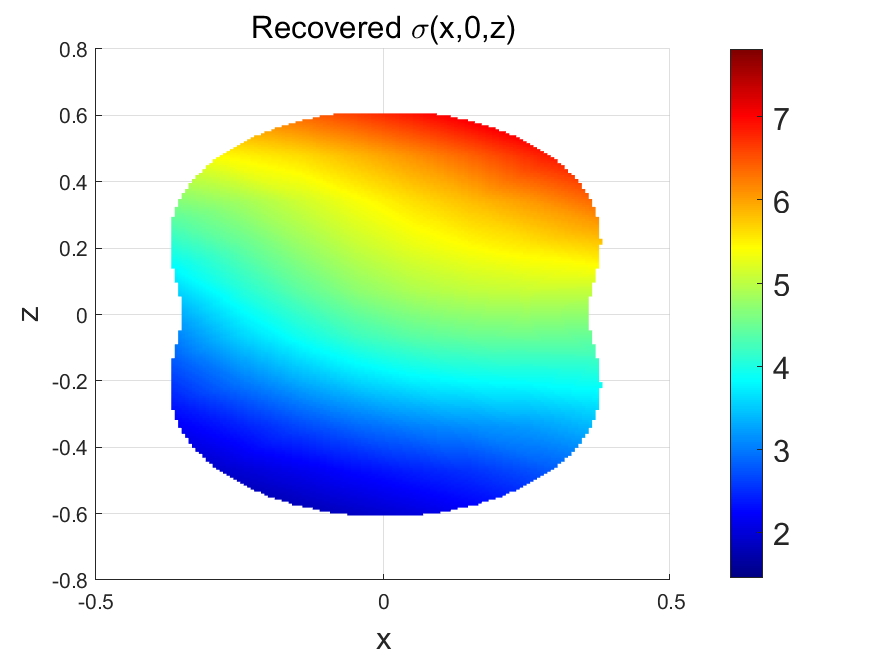}
\end{minipage}
}
\subfigure
{
\begin{minipage}[t]{0.33\linewidth}
\centering
\includegraphics[scale=0.3]{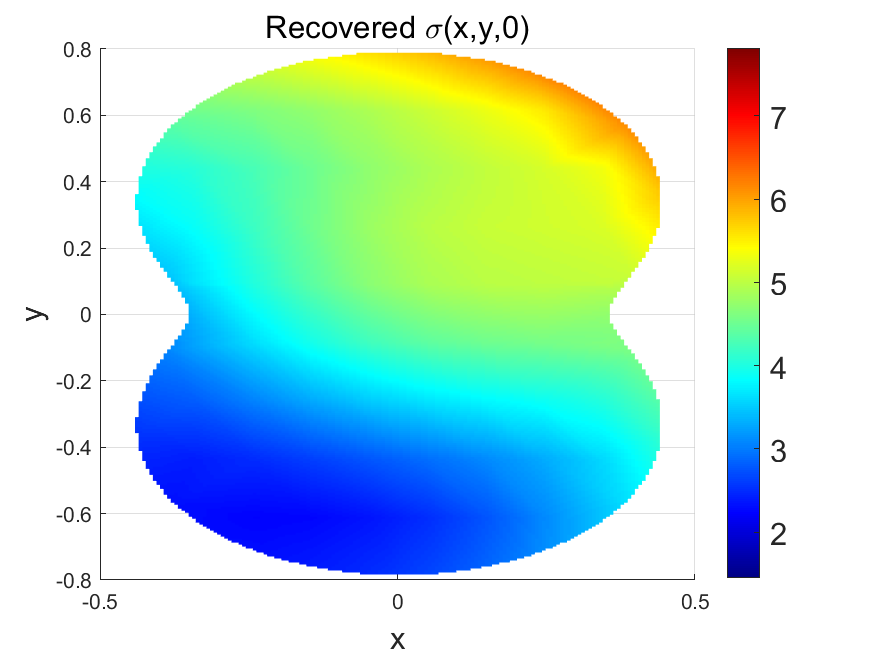}
\end{minipage}
}
\centering
\caption{Reconstructions of $\sigma(x,y,z)$ compared with exact one for noise level $\delta=0.05$ in three slice planes: $x=0$ (left), $y=0$ (middle), $z=0$ (right).}
\label{Liu-24-recover-sigma3_0}
\end{figure}

Similarly, we also give the reconstruction performances of the Robin coefficient $h(x,y,z)$ and conductivity $\sigma(x,y,z)$ quantitatively. In 3-dimensional case, the relative errors by $L^2$-norm are analogously
defined as
 \begin{align*}
\norm{\frac{h^{\delta}-h}{h}}_{L^2(\partial\Omega)}&\approx \text{RE}_{\partial\Omega}(h,\delta):=\sqrt{\frac{\pi}{N}\sum\limits_{j=1}^{N}\sum\limits_{k=0}^{2N-1}w_j\left | \frac{h^{\delta}(q_{jk})-h(q_{jk})}{h(q_{jk})} \right |^2 J(x_{jk})},\\
\norm{\frac{\sigma^{\delta}-\sigma}{\sigma}}_{L^2(\Omega)}&\approx \text{RE}_{\Omega}(\sigma,\delta):=\sqrt{\sum\limits_{l=1}^{N_t}V_l\frac{1}{4}\sum\limits_{i=1}^{4}\left |\frac{\sigma_{l(i)}^{\delta}-\sigma_{l(i)}}{\sigma_{l(i)}}\right |^2},
\end{align*}
respectively, where $w_j$ is Gauss quadrature weight and $J$ is the Jacobian of function $q$ which defines $\partial\Omega$, while $V_l$ is the volume of tetrahedron $\Delta_l$  with four vertices $l(1),\cdots,l(4)$ for $l=1,\cdots,N_t$ satisfying $\Omega\approx\bigcup _{l=1}^{N_t}\Delta_l$.
\begin{table}[H]
\centering
\caption{Reconstruction errors  of $h$ and $\sigma$ with respect to the noisy level $\delta$. }
\vspace{5pt}
\begin{tabular}{c|cccc}
\hline
Noisy level $\delta$ & 0.01 & 0.02 & 0.05 & 0.1 \\
\hline
$\text{RE}_{\partial\Omega}(h,\delta)$ & 2.244416e-3& 2.539336e-3 & 4.857161e-3 & 9.321099e-3\\

$\text{RE}_{\Omega}(\sigma,\delta)$ & 3.917395e-2 & 4.042315e-2 & 4.290789e-2 & 4.867596e-2 \\

\hline
\end{tabular}
\label{Liu-24-table-hsigma3D}
\centering
\end{table}

It can be seen from Table \ref{Liu-24-table-hsigma3D} that both $\text{RE}_{\partial\Omega}(h,\delta)$ and $ \text{RE}_{\Omega}(\Omega,\delta)$ increase as $\delta$ becomes larger. Especially for $\delta=0.05,0.1$, the error increases quickly compared to the relative small level noisy $\delta=0.01,0.02$. Moreover, it should be noticed that
$\text{RE}_{\Omega}(\sigma,\delta)$ for different noise levels are always larger than
$\text{RE}_{\partial\Omega}(h,\delta)$, since the error of recovering $h$ will propagate to the reconstruction process for conductivity $\sigma$, as analyzed in Section 3.

\section{Appendix}

We prove the estimate
\begin{equation}\label{app-1}
\norm{R_{\alpha}}_{\mathcal{L}(H^1(\Omega_\epsilon),H^2(\Omega_\epsilon))}\le C(\Omega_\epsilon,\rho,n)\frac{1}{\alpha^{n/2}}, \quad  n=2,3.
\end{equation}

Denote by $\tilde w(x)$ defined in $\mathbb{R}^n$ with $\hbox{supp}\;\tilde w\subset V$ the extension of the function $w(x)$ defined in $\Omega_\epsilon$. Then, by the same arguments in  \cite{Wangyuchan}, we have
\begin{align}\label{app-2}
    \int_{\Omega_\epsilon}\left|R_\alpha[w](x)\right|^2dx
    &\le
    \norm{\tilde w}^2_{L^2(V)}\int_{\Omega_\epsilon}dx\int_{|x-y|\le \alpha}(\rho_\alpha(|x-y|))^2dy
    \nonumber\\&\le
\norm{w}^2_{H^1(\Omega_\epsilon)}\int_{\Omega_\epsilon}dx\int_{|x-y|\le \alpha}(\rho_\alpha(|x-y|))^2dy,
\end{align}
\begin{align}\label{app-3}
    \int_{\Omega_\epsilon}\left|\nabla R_\alpha[w](x)\right|^2dx
    &\le
    \norm{\nabla \tilde w}^2_{L^2(V)}\int_{\Omega_\epsilon}dx\int_{|x-y|\le \alpha}(\rho_\alpha(|x-y|))^2dy
    \nonumber\\&\le
\norm{w}^2_{H^1(\Omega_\epsilon)}\int_{\Omega_\epsilon}dx\int_{|x-y|\le \alpha}(\rho_\alpha(|x-y|))^2dy,
\end{align}
\begin{align}\label{app-4}
    \int_{\Omega_\epsilon}\left|\Delta R_\alpha[w](x)\right|^2dx
    &\le
    \norm{\nabla \tilde w}^2_{L^2(V)}\int_{\Omega_\epsilon}dx\int_{|x-y|\le \alpha}(\rho'_\alpha(|x-y|))^2dy
    \nonumber\\&\le
\norm{w}^2_{H^1(\Omega_\epsilon)}\int_{\Omega_\epsilon}dx\int_{|x-y|\le \alpha}(\rho'_\alpha(|x-y|))^2dy.
\end{align}

So \eqref{app-2}-\eqref{app-4} leads to
\begin{align}\label{app-5}
&\hskip 0.4cm\norm{R_{\alpha}}^2_{\mathcal{L}(H^1(\Omega_\epsilon),H^2(\Omega_\epsilon))}\nonumber\\
&\le
2\left[\int_{\Omega_\epsilon}dx\int_{|x-y|\le \alpha}(\rho_\alpha(|x-y|))^2dy+\int_{\Omega_\epsilon}dx\int_{|x-y|\le \alpha}(\rho'_\alpha(|x-y|))^2dy\right].
\end{align}

For the first term in the right-hand side of \eqref{app-5},  we have
\begin{align}\label{app-6}
    \int_{\Omega_\epsilon}dx\int_{|x-y|\le \alpha}(\rho_\alpha(|x-y|))^2dy&=\int_{\Omega_\epsilon}dx\int_0^\alpha \frac{1}{\alpha^{2n}}\left(\rho(\frac{t}{\alpha})\right)^2 2^{n-1}\pi t^{n-1}dt
\nonumber\\
&=\frac{1}{\alpha^n}|\Omega_\epsilon|\int_0^1(\rho(t))^2
2^{n-1}\pi t^{n-1}dt:=
C(\rho,n,\Omega_\epsilon)\frac{1}{\alpha^n}.
\end{align}
For the second term, we also have
\begin{equation}\label{app-7}
    \int_{\Omega_\epsilon}dx\int_{|x-y|\le \alpha}(\rho_\alpha(|x-y|))^2dy=
\frac{1}{\alpha^n}|\Omega_\epsilon|\int_0^1(\rho'(t))^2
2^{n-1}\pi t^{n-1}dt:=C(\rho',n,\Omega_\epsilon)\frac{1}{\alpha^n}.
\end{equation}

Combining \eqref{app-5}-\eqref{app-7} together, we obtain \eqref{app-1}.

\begin{rem}
In \cite{Wangyuchan}, we established a weak estimate
$\norm{R_{\alpha}}\le C(\Omega_\epsilon,\rho,2)\frac{1}{\alpha^{2}}$ in the case $n=2$.
\end{rem}

\vskip 0.3cm

{\bf Acknowledgements: } This work is supported by National Key R$\&$D Program of China (No.2020YFA0713800), and Jiangsu Provincial Scientific Research Center of Applied Mathematics under Grant No. BK20233002.

\vskip 0.3cm

\end{document}